\documentclass{amsart}
\usepackage[margin=1.2in]{geometry}

\usepackage{xcolor}
\usepackage{url}
\usepackage{amssymb}
\usepackage{amsmath}
\usepackage{amsthm}
\usepackage{amsfonts}
\usepackage{graphicx}
\usepackage{float}
\usepackage{wrapfig}
\usepackage{enumerate}
\usepackage{listings}
\usepackage{dsfont}
\usepackage{mathtools}
\usepackage{siunitx}
\usepackage{tikz}
\usepackage{circuitikz}
\usepackage{tcolorbox}
\usepackage{extarrows}
\usetikzlibrary{arrows.meta, decorations.pathreplacing, calligraphy}

\newtheorem{theorem}{Theorem}[section]

\newtheorem{lemma}[theorem]{Lemma}
\newtheorem{proposition}[theorem]{Proposition}
\newtheorem{definition}[theorem]{Definition}
\newtheorem{example}[theorem]{Example}

\newtheorem{remark}[theorem]{Remark}

\newtheorem{assumption}[theorem]{Assumption}

\newcommand{\E}{\mathbb{E}}
\renewcommand{\P}{\mathbb{P}}

\newcommand{\R}{\mathbb{R}}
\newcommand{\N}{\mathbb{N}}

\newcommand{\slices}[1]{\widehat{\bold{#1}}}
\renewcommand{\epsilon}{\varepsilon}

\DeclareFontFamily{U}{mathx}{}
\DeclareFontShape{U}{mathx}{m}{n}{<-> mathx10}{}
\DeclareSymbolFont{mathx}{U}{mathx}{m}{n}
\DeclareMathAccent{\widecheck}{0}{mathx}{"71}

\title[Convergence of adapted empirical measure for mixing observations]{Convergence of the adapted empirical measure for mixing observations}

\author{Ruslan Mirmominov}
\address{Ruslan Mirmominov\newline
\hbox{}\hspace{0.33cm} Carnegie Mellon University\newline
\hbox{}\hspace{0.33cm} Department of Mathematics}
\email{rmirmomi@andrew.cmu.edu}

\author{Johannes Wiesel}
\address{Johannes Wiesel\newline
\hbox{}\hspace{0.33cm} University of Copenhagen \newline
\hbox{}\hspace{0.33cm} Department of Mathematics}
\email{wiesel@math.ku.dk}

\begin{document}
\maketitle
\begin{abstract}
The adapted Wasserstein distance $\mathcal{AW}$ is a modification of the classical Wasserstein metric, that provides robust and dynamically consistent comparisons of laws of stochastic processes, and has proved particularly useful in the analysis of stochastic control problems, model uncertainty, and mathematical finance.
In applications, the law of a stochastic process $\mu$ is not directly observed, and has to be inferred from a finite number of samples. As the empirical measure is not $\mathcal{AW}$-consistent, Backhoff, Bartl, Beiglb\"ock and Wiesel introduced the adapted empirical measure $\widehat{\mu}^N$, a suitable modification, and proved its $\mathcal{AW}$-consistency when observations are i.i.d. 

In this paper we study $\mathcal{AW}$-convergence of the adapted empirical measure $\widehat{\mu}^N$ to the population distribution $\mu$, for observations satisfying a generalization of the $\eta$-mixing condition introduced by Kontorovich and Ramanan. We establish moment bounds and sub-exponential concentration inequalities for $\mathcal{AW}(\mu,\widehat{\mu}^N)$, and prove consistency of $\widehat{\mu}^N$. In addition, we extend the Bounded Differences inequality of Kontorovich and Ramanan for $\eta$-mixing observations to uncountable spaces, a result that may be of independent interest. Numerical simulations illustrating our theory are also provided.
\end{abstract}

\section{Introduction}

In recent years, the field of statistical optimal transport (OT) has gained a lot of traction. While originally concerned with the problem of quantifying convergence of the empirical measure $\mu^N=1/N\sum_{n=1}^N \delta_{X_n}$ of i.i.d.~samples $X_1, \dots, X_N$ to its population counterpart $\mu$ in the Wasserstein metric $
\mathcal{W}$, statistical OT has by now made important contributions to many central questions in statistics and machine learning. We refer to \cite{chewi2024statistical} and the references therein for an overview of recent developments.

In this article we set out to study a variant of the original estimation problem $\mathcal{W}(\mu, \mu^N)$, on the Wasserstein space of discrete time stochastic processes. More specifically, assuming that we observe a (not necessarily i.i.d.)~sample $\slices{X}:=(X^n)_{n = 1}^N \in (\R^d)^T$ from a stochastic process with finite time horizon $T>1$ (see Figure \ref{fig:1}), we aim to estimate the convergence of this sample to its data generating distribution in the adapted Wasserstein distance $\mathcal{AW}$. 

\begin{figure}[h!]
\begin{center}
    \newcommand{\RectWithBrace}[6]{%
  \draw[thick] (#1,#3) rectangle ({#1 + #2},#4);

  \pgfmathsetmacro{\cellwidth}{1}
  \foreach \i [count=\n] in {#5} {
    \pgfmathsetmacro{\xpos}{#1+\n*\cellwidth}
    \ifnum\n<#2
      \draw[thick] (\xpos,#3) -- (\xpos,#4);
    \fi
    \node at ({#1+(\n-0.5)*\cellwidth}, {(#3+#4)/2}) {\small $\i$};
  }

  \draw [decorate, decoration={calligraphic brace, amplitude=8pt}]
    (#1,#4+0.05) -- ({#1 + #2},#4+0.05)
    node [midway, yshift=0.6cm] {\small $#6$};
}

\begin{tikzpicture}[>=Stealth, scale=0.68]

  \def\xmax{18}
  \def\ymax{0}

  \draw[step=0.5cm, lightgray!70, very thin] (0,0) grid (\xmax,\ymax);

  \draw[->, thick] (0,0) -- (\xmax+0.3,0) node[below right] {time};
  \foreach \x in {0,1,...,12} {
    \draw[thick] (\x,0) -- (\x,-0.15); 
    \node[below] at (\x,-0.15) {\small \x}; 
  }
  \node[below] at (13, -0.30) {\ldots};

  \RectWithBrace{2}{3}{0}{1}{X^1_1,\ldots,X^1_T}{X^1}
  \RectWithBrace{8}{3}{0}{1}{X^2_1,\ldots,X^2_T}{X^2}
  \RectWithBrace{14}{3}{0}{1}{X^n_1,\ldots,X^n_T}{X^n}

\end{tikzpicture}    
\end{center}
    \label{fig:1}
    \caption{Observations $\slices{X}:=(X^n)_{n = 1}^N \in (\R^d)^T$.}
\end{figure}

As explained e.g.~in \cite{backhoff2020adapted, backhoff2020all, blanchet2024bounding, bartl2021wasserstein}, the adapted (or nested) Wasserstein distance is a strengthening of the usual Wasserstein distance $\mathcal{W}$. It generates a topology, that makes time-dependent optimization problems of the type $$\mu \mapsto V(\mu):=\sup_{a\in \mathcal{A}} \int f(x,a)\,\mu(dx)$$ continuous; here $\mathcal{A}$ is the set of predictable strategies or the set of stopping times. Importantly, $\mu \mapsto V(\mu)$ is not continuous in $\mathcal{W}$, so it is not sufficient to consider the Wasserstein metric. $\mathcal{AW}$ offers attractive properties for statistical estimation of time-dependent optimization problems, e.g.~in mathematical finance and operations research. We refer to \cite{bartl2021wasserstein} for a survey of the theory of adapted optimal transport in discrete time. It is worth pointing out that many time-dependent optimization problems $V$ classically studied in mathematical finance are actually Lipschitz continuous wrt.~$\mathcal{AW}$; see \cite{backhoff2020adapted}. In particular, bounds in the adapted Wasserstein distance immediately yield strongly consistent estimators for \emph{all} such functionals, rendering tailor made ad hoc constructions unnecessary.

Statistical inference on the adapted Wasserstein space goes back at least to \cite{pflug2016empirical, glanzer2019incorporating}, showing that $\lim_{N\to \infty} \mathcal{AW}(\mu, \mu^N)\neq 0$ in general. This follows from the simple observation that for continuous laws $\mu$, a sample path drawn from the empirical measure $\mu^N$ can almost surely be reconstructed from its value at the initial time; in other words, $(X^n_1)_{n = 1}^N$ already contains the full information structure of $(X^n)_{n=1}^N$ under $\mu^N$.  This is why \cite{backhoff2022estimating} introduce the so-called \emph{adapted empirical measure} $\widehat{\mu}^N$. Assuming that $(X^n)_{n=1}^N$ are i.i.d, \cite{backhoff2022estimating} show that $\lim_{n\to \infty} \mathcal{AW}( \mu, \widehat{\mu}^N)=0$ for compactly supported measures $\mu$ and quantify the speed of convergence for suitably regular measures; see also the subsequent work \cite{acciaio2024convergence} for an extension to probability measures $\mu$ with polynomial moments. Convergence of empirical measures in the adapted weak topology has also been studied in \cite{hou2024convergence, larsson2025fast, blanchet2024bounding}.

While the above works lay the basis for statistical inference in the adapted Wasserstein space, they do not lend themselves to practical applications in mathematical finance or operations research: in reality, a statistician typically only has access to \emph{one single} time series of observations, that is often assumed to be stationary. In other words, the observations $(X^n)_{n = 1}^N$ are \emph{not} independent. Well-known examples of such settings include observations from stationary mixing Markov processes and more complicated time series models. In consequence, the consistency results of \cite{acciaio2024convergence, backhoff2022estimating} cannot be applied in these cases.

In this article we address this gap in the literature by deriving the first rates of convergence for $\mathcal{AW}(\mu, \widehat{\mu}^N)$, where $(X^n)_{n = 1}^N$ are mixing observations with law $\mu$. For simplicity of exposition in this Introduction, we state our three main results for compactly supported measures $\mu$ and refer to Section \ref{sec:main_results} below for more general statements.

Our first main result is the bound
\begin{align} \label{eq:sample_complex_intro}
\mathbb{E}\,\mathcal{AW}(\mu, \widehat \mu^N) \leq C \cdot \sqrt{1 + 2 \sum_{s = 1}^{N-1} \eta_{\slices{X}}(s)} \cdot \begin{cases}
N^{- 1 / (T + 1)}, &d = 1,\\
N^{- 1 / (2T)} \log (N + 1), &d = 2,\\
N^{- 1 / (dT)}, &d \geq 3,
\end{cases}
\end{align}
on the sample complexity, assuming that $\mu$ has regular kernels (see Def. \ref{def:Lipschitz_kernels}). Here the constant $C > 0$ depends on $L$, $d$, $T$ and $\operatorname{spt}(\mu)$; see Theorems \ref{thm:moment_estimate_compact} and \ref{thm:rate_general}. Our second main result is the concentration inequality
\begin{align}\label{eq:concentration_intro}
\mathbb{P}\left(\left|\mathcal{AW}(\mu, \widehat \mu^N) - \mathbb{E}\, \mathcal{AW}(\mu, \widehat \mu^N)\right| > \varepsilon\right) \leq 2 \exp\left(-c \cdot N \cdot \frac{\varepsilon^2}{\operatorname{diam}(\operatorname{spt}(\mu))^2 \cdot (1 + \sum_{s = 1}^{N-1} \bar \eta_{\slices{X}}(s))^2}\right),
\end{align}
where the constant $c > 0$ depends on $T$; see Theorems \ref{thm:AW_concentration_compact} and \ref{thm:AW_concentration_general}. Our last main result is the consistency of the adapted empirical measure 
\begin{align}\label{eq:consistency_intro}
\lim_{N\to \infty} \mathcal{AW}(\mu, \widehat{\mu}^N) =0
\end{align}
under an appropriate mixing condition (stated in terms of $\eta_{\slices{X}}(s)$ and $\bar \eta_{\slices{X}}(s)$ for $s>0$);
see Theorems \ref{thm:consistency_general} and \ref{thm:consistency_general_noncompact}. 

In the statements above, the mixing coefficients $\eta_{\slices{X}}(s)$ and $\bar \eta_{\slices{X}}(s)$ play a central role. They are formally defined in Section \ref{sec:mixing_conditions} as
\begin{align*}
\eta_{\slices{X}}(s) &:= \sup_{\substack{1\le n<N-s+1 \\ A_{1:n} \in \mathcal{B}(\R^d)^{n}\\\mathbb{P}(X^{1:n} \in A_{1:n}) > 0}} \operatorname{TV}(\operatorname{Law}(X^{n+s} \mid X^{1:n} \in A_{1:n}), \operatorname{Law}(X^{n+s} \mid X^{1:n-1} \in A_{1:n-1})),\\
\bar \eta_{\slices{X}}(s) &:= \sup_{\substack{1\le n<N-s+1\\ A_{1:n} \in \mathcal{B}(\R^d)^{n}\\\mathbb{P}(X^{1:n} \in A_{1:n}) > 0}} \operatorname{TV}(\operatorname{Law}(X^{n+s:N} \mid X^{1:n} \in A_{1:n}), \operatorname{Law}(X^{n+s:N} \mid X^{1:n-1} \in A_{1:n-1}))
\end{align*}
for $1\le s<N.$
In words, $\eta_{\slices{X}}(s)$ measures the total variation distance between the conditional laws $\text{Law}(X^{n+s}|X^{1:n})$ and $\text{Law}(X^{n+s}|X^{1:n-1})$ uniformly over $X^{1:n} \in A_{1:n}$; a similar interpretation holds for $\bar \eta_{\slices{X}}(s)$. Intuitively one expects $\eta_{\slices{X}}(s)$ to decrease when the time lag $s$ increases; see Figure \ref{fig:1}. As we will discuss in Section \ref{sec:mixing_conditions} below, this is indeed the case in many classical examples. In particular $\eta_{\slices{X}}(s)=0$ for all $s>0$ if and only if $\slices{X}$ consists of independent observations (see Proposition \ref{prop:independent}). 

The coefficients $\eta_{\slices{X}}(s)$ and $\bar\eta_{\slices{X}}(s)$ can be interpreted as an extension of the $\eta$-mixing coefficient of \cite{kontorovich2008concentration} to uncountable probability spaces. To the best of our knowledge, this extension is new. We will see below that the coefficients $\eta_{\slices{X}}(s)$ and $\bar\eta_{\slices{X}}(s)$ are widely applicable in the study of adapted empirical convergence. They also offer a direct connection of the sample complexity result \eqref{eq:sample_complex_intro} to the i.i.d.~case: in fact, as soon as $\eta_{\slices{X}}(s)$ is summable, \eqref{eq:sample_complex_intro} yields the same bound on the sample complexity as in the independent case; cf.~\cite{acciaio2024convergence, backhoff2022estimating}.

We obtain \eqref{eq:sample_complex_intro} with a similar methodology as in \cite{acciaio2024convergence} and \cite{backhoff2022estimating}. Indeed, going back to \cite{backhoff2022estimating}, such bounds are proved by reducing the estimation of the adapted Wasserstein distance to a bound of the form $\mu^N(G) \cdot \mathcal{W}(\mu_G, (\mu^N)_G)$, where $\mu_G(\cdot) := \mu(\cdot \mid x_{1:t} \in G)$ for some Borel set $G$, i.e.~on linear combinations of the classical Wasserstein distance between conditional laws. These terms are then analyzed using moment estimates for $\mathcal{W}$ from \cite{fournier2015rate}. Importantly, this method relies on the fact that $(\mu^N)_G \sim (\mu_G)^{N \mu^N(G)}$, i.e.~the conditional law of $
\mu^N$ has the same distribution as the empirical measure of the conditional law, with the number of samples adjusted to satisfy the disintegration condition. 
However, this identity in distribution is \emph{false} in general, if the observations $\slices{X}$ are mixing. Hence it is not possible in our setting to use moment estimates from \cite{fournier2015rate} directly. Instead, we bound the Wasserstein distance by a linear combination of the differences of probabilities of Borel sets; see \cite[Lemma 2]{dereich2013constructive} and \cite[Lemma 6]{fournier2015rate}. These differences can be estimated by a covariance bound under the $\eta$-mixing condition. As a comparison, if $\slices{X}$ are independent, we recover the same rate as in \cite[Theorem 1.5]{backhoff2022estimating} for compactly supported probability measures $\mu$, and the same rate as in \cite[Theorem 2.16, (i)]{acciaio2024convergence} for general $\mu$.

In order to obtain \eqref{eq:concentration_intro}, we first establish general concentration inequalities via the \emph{martingale differences} method \cite[Lemma 1 (Azuma's inequality)]{azuma1967weighted}, and then apply them to $\mathcal{AW}(\mu, \widehat\mu^N)$. Our approach differs from \cite{acciaio2024convergence} and \cite{backhoff2022estimating}: as we have already noted above, independence of $\slices{X}$ implies $(\mu^N)_G \sim (\mu_G)^{N \mu^N(G)}$. Hence concentration inequalities for the Wasserstein distance from \cite{fournier2015rate} or the Bounded Differences inequality (see e.g.~\cite{mcdiarmid1989method} and \cite[Theorem 3]{kontorovich2014concentration}) can be used to obtain concentration for the terms $\mathcal{W}(\mu_G, (\mu^N)_G)$. 
In more detail, \cite{fournier2015rate} uses delicate estimates relying on the fact that $\mu^N(A)$ has a binomial distribution. Naturally, this fails if the observations $\slices X$ are not independent. Similarly, we cannot use the \emph{Poissonization trick} of \cite{fournier2015rate}, where $N$ is replaced by a Poisson random variable with expected value $N$ and one shows that the error for such a replacement is negligible.
Instead, we achieve concentration via the distribution-free martingale differences method. Our new concentration inequalities extend the classical Bounded Differences inequality. They are potentially useful in other applications, e.g.~for bounds of the Wasserstein distance or the Conditional Value-At-Risk. 

In order to establish \eqref{eq:consistency_intro}, we combine smoothed TV bounds as in \cite{hou2024convergence} with our newly established concentration inequalities. 
In  fact, if $\mu$ admits continuous disintegrations, then we can follow the arguments of \cite{backhoff2022estimating}. However, the subsequent extension to general $\mu$ via Lusin's theorem breaks down in our setting, as it is greatly affected by the weak dependencies of $\slices{X}$. In order to resolve this issue, we convolve $\widehat\mu_N$ with independent Gaussian noise and use the methodology of \cite{hou2024convergence} to bound the Adapted Wasserstein distance by the smoothed TV distance. 
A potentially different approach would use consistency for the independent case via a \textit{decoupling} argument: here one would replace $\slices{X} = (X^n)_{n = 1}^N$ with an independent version $\widehat{\bold Y} = (Y^n)_{n = 1}^N$ such that $X^n \sim Y^n$ and $\mathbb{P}\left(X^n \neq Y^n\right) \leq \phi_n$ for a sequence $(\phi_n)_{n
\in \N}$ that depends on the mixing properties of $\bold X$. However, we were unable to identify a sequence $(\phi_n)_{n\in \N}$ that decays with $n$; the closest result we could find is \cite{ruschendorf1985wasserstein}.

The remainder of the article is structured as follows: Section \ref{sec:notation} introduces notation. In Section \ref{sec:setting} we define the adapted Wasserstein distance $\mathcal{AW}$, the adapted empirical measure $\widehat{\mu}^N$ and the mixing conditions for the observations $(X^n)_{n = 1}^N$ formally, and make some preliminary observations. We state the main results of the paper in Section \ref{sec:main_results}. Section \ref{sec:numerics} is dedicated to numerical experiments. In Section \ref{sec:moment_estimate} we prove \eqref{eq:sample_complex_intro} and some auxiliary results. We state and prove general concentration inequalities and their applications to \eqref{eq:concentration_intro} in Section \ref{sec:concentration}. Afterwards we prove \eqref{eq:consistency_intro} in Section \ref{sec:consistency}. The remaining technical results are stated and proved in Section \ref{sec:remaining}.

\section{Notation}\label{sec:notation}
For a Polish space $(\mathcal{X}, d_{\mathcal{X}})$ we denote by $\mathcal{P}(\mathcal{X})$ the set of Borel probability measures on $\mathcal{X}$. We denote the Borel $\sigma$-algebra on $\mathcal{X}$ by $\mathcal{B}(\mathcal{X})$. For $p \ge 1$ we write $\mathcal{P}_p(\mathcal{X})$ for the set of Borel probability measures $\mu$ on $\mathcal{X}$ satisfying $\int d_{\mathcal{X}}(x_0, x)^p \, \mu(dx) < \infty$ for some $x_0 \in \mathcal{X}$. For two probability measures $\mu, \nu \in \mathcal{P}(\mathcal{X})$ we denote by $\Pi(\mu, \nu)$ the set of probability measures $\pi \in \mathcal{P}(\mathcal{X} \times \mathcal{X})$ such that $\pi(\cdot \times \mathcal{X}) = \mu(\cdot)$ and $\pi(\mathcal{X} \times \cdot) = \nu(\cdot)$. Let us denote the total variation distance between $\mu$ and $\nu$ by $\operatorname{TV}(\mu, \nu)$. We write $\operatorname{spt}(\mu)$ for the support of a measure $\mu$. Any measure $\pi\in \Pi(\mu,\nu)$ admits a $\mu$-a.s. defined disintegration $x\mapsto \pi_x$ satisfying $\pi(A\times B)=\int_A \pi_x(B)\,\mu(dx)$ for all $A,B\in \mathcal{B}(\mathcal{X}).$ 

In this paper we often consider sequences of random elements taking values in some Polish space $(\mathcal{X}, d_{\mathcal{X}})$, as well as the corresponding sequences of \emph{slices} of length $T >1$. We use subscript to enumerate the former and superscript for the latter, i.e. $(x_n)_{n = 0}^\infty \subset \mathcal{X}$ and $(x^n)_{n = 0}^\infty \subset \mathcal{X}^T$. We use $x_{i:j}$ ($x^{i:j}$) to abbreviate the vector $(x_i, \ldots, x_j) \in \mathcal{X}^{j - i + 1}$ (or $(x^i, \ldots, x^j) \in (\mathcal{X}^T)^{j - i + 1}$) for $i \leq j$; if $i > j$ then we set $x_{i:j} := \emptyset$. Analogously we abbreviate $A_s \in \mathcal{B}(\mathcal{X})$ for $s \in \{1, \ldots, k\}$, where $(\mathcal{X}, d_\mathcal{X})$ is a metric space, by $A_{1:k} \in \mathcal{B}(\mathcal{X})^k$.  If $\Phi: \mathcal{X} \to \mathcal{Y}$ is Borel measurable, then we denote $\bigotimes_{s = 1}^k \Phi^{-1}(A_s)$ by $\Phi^{-1}(A_{1:k}) \in \mathcal{B}(\mathcal{X})^k$, where $A_{1:k} \in \mathcal{B}(\mathcal{Y})^k$, and abbreviate $(\Phi(x_1), \ldots, \Phi(x_k)) \in \mathcal{Y}^k$ by $\Phi(x_{1:k})$ for $x_{1:k} \in \mathcal{X}^k$. We write $ab \in \mathcal{X}^{s + t}$ for the concatenation of vectors $a \in \mathcal{X}^s$ and $b \in \mathcal{X}^t$, and analogously write $AB := A \times B \in \mathcal{B}(\mathcal{X}) \times \mathcal{B}(\mathcal{Y})$ for $A \in \mathcal{B}(\mathcal{X})$ and $B \in \mathcal{B}(\mathcal{Y})$.

We equip the Euclidean space $\R^k$, $k\ge 1$, with the canonical scalar product $\cdot$ and the Euclidean norm $\|x\|=(\sum_{l=1}^k |x_l|^2)^{1/2}$.
We denote the diameter of a set $S \subseteq \mathbb{R}^j$ by $\operatorname{diam}(S) := \sup_{x, y \in S} \|x - y\|$. Denote the norm of a set $S$ by $\|S\| := \sup_{x \in S} \|x\|$. Furthermore, we write $B_\delta(z)$ for a closed ball of radius $\delta$ around $z \in \R^k$.

Returning to the Introduction, we write $\widecheck{\bold X} := (X_n)_{n = 1}^N$ for a sequence of random elements on $(\R^d,\|\cdot\|)$; analogously we write $\slices{X} := (X^n)_{n = 1}^N$ for a sequence of slices in $(\R^d)^T$. All random elements are defined on some fixed probability space $(\Omega, \mathcal{F}, \mathbb{P})$. We use bold font to denote sequences of random elements, and regular font for its elements. In generic statements below that hold for $\widecheck{\bold X}$ and $\slices{X}$ (e.g.~concentration inequalities), we use the placeholder $\bold Z = (Z_n)_{n = 1}^N$ for a generic sequence of random elements in $\R^k$, $k \geq 1$. With a slight abuse of notation, we write $\bold Z = (Z_n)_{n = 1}^N \sim \mu$ for $Z_n \sim \mu \text{ for any } n \in \{1, \ldots, N\}$, and $\operatorname{Law}(Z_1, \ldots, Z_N) \in \mathcal{P}((\R^k)^N)$ for the joint law of the sequence $\bold Z$.

For a probability measure $\mu \in \mathcal{P}(\R^k)$ we write $\mathcal{E}_{\alpha, \gamma}(\mu) = \int \exp(\gamma \|x\|^\alpha)\, \mu(dx)$. Similarly, for a random vector $X$ taking values in $\R^k$  we write $\mathcal{E}_{\alpha, \gamma}(X) = \mathbb{E}\, \exp(\gamma \|X\|^\alpha)$.

Lastly we write $C > 0$ for a constant which may increase from line to line; similarly we denote by $c > 0$ a constant which may decrease from line to line. Throughout we specify all non-trivial dependencies of these constants.

\section{Setting and preliminary observations}\label{sec:setting}

\subsection{The adapted Wasserstein distance}\label{sec:adapted_wasserstein}

In this section we recall the basic concepts from adapted optimal transport. 

\begin{definition}\label{def:wasserstein}
For two probability measures $\mu, \nu \in \mathcal{P}_1(\R^k)$, $k \geq 1$ we define the  Wasserstein distance as
$$
\mathcal{W}(\mu, \nu) = \inf_{\pi \in \Pi(\mu, \nu)} \int \|x - y\| \, \pi(dx, dy).
$$
\end{definition}

Throughout this paper we are interested in its modification induced by a specific subset of $\Pi(\mu, \nu)$.

\begin{definition}[cf. {\cite[Lemma 2.2]{bartl2021wasserstein}}]\label{def:bicausal}
Let $\pi \in \Pi(\mu, \nu)$ be a transport plan for some $\mu, \nu \in \mathcal{P}((\R^k)^M)$, $k, M \ge 1$. Then $\pi$ is called \textit{bicausal}, if $\pi_{x_{1:m}, y_{1:m}} \in \Pi(\mu_{x_{1:m}}, \nu_{y_{1:m}})$ almost everywhere with respect to $\pi(dx_{1:m}, dy_{1:m})$ for $m \in \{1, \ldots, M-1\}$ and $\pi^1 \in \Pi(\mu^1, \nu^1)$. Here $\pi_{x_{1:m}, y_{1:m}}$ is defined via the disintegration rule
\begin{equation}\label{eqn:disintegration_definition}
\pi(B_1 \times \ldots \times B_M) = \int_{B_1} \ldots \int_{B_M} \, \pi_{x_{1:M-1}, y_{1:M-1}}(dx_M, dy_M) \ldots \pi^1(dx_1, dy_1),
\end{equation}
for all $B_m \in \mathcal{B}(\R^k \times \R^k)$ for $m \in \{1, \ldots, M\}$, and similarly for $\mu, \nu$. 
We denote the set of bicausal couplings by $\Pi_{\operatorname{bc}}(\mu, \nu)$. 
\end{definition}

Definition \ref{def:bicausal} emphasizes the non-anticipativity of the bicausal couplings, meaning that given $(x_{1:m}, y_{1:m})$, one can only ``see" the conditional laws $\mu_{x_{1:m}}$ and $\nu_{y_{1:m}}$. 

\begin{definition}\label{def:adapted_wasserstein}
For two probability measures $\mu, \nu \in \mathcal{P}_1((\R^k)^M)$ we define the adapted Wasserstein distance as
$$
\mathcal{AW}(\mu, \nu) := \inf_{\pi \in \Pi_{\operatorname{bc}}(\mu, \nu)} \int \sum_{m = 1}^M \|x_m - y_m\| \, \pi(dx, dy).
$$
\end{definition}

To illustrate the difference between $\mathcal{W}$ and $\mathcal{AW}$, we consider the following classical example:
\begin{example}
Let $M = 2$ and $$\mu^N=\frac{1}{2}\delta_{(\epsilon_n, 1)}+\frac{1}{2}\delta_{(-\epsilon_n, -1)}\quad \text{and} \quad \mu=\frac{1}{2}\delta_{(0,1)}+\frac{1}{2}\delta_{(0,-1)}.$$ If $\epsilon_n\to 0$, then $\mathcal{W}(\mu, \mu^N)\to 0$ and $\mathcal{AW}(\mu, \mu^N)=\epsilon_n+1\to 1>0$. To see this, observe that for any bicausal coupling $\pi \in \Pi_{\operatorname{bc}}(\mu, \mu^N)$ one must have $\pi_{x_1 = \pm \varepsilon_n, y_1 = 0} \in \Pi(\delta_{\pm 1}, \frac{1}{2} \delta_1 + \frac{1}{2} \delta_{-1})$, and the corresponding transport cost is at least $1$.
\end{example}

The adapted Wasserstein distance $\mathcal{AW}$ metrizes the adapted weak topology; see e.g.~\cite{bartl2021wasserstein}.

\subsection{Adapted empirical measure}
Let $N \in \mathbb{N}$ and consider a sequence of random elements $\slices{X} = (X^n)_{n = 1}^N \sim \mu \in \mathcal{P}((\mathbb{R}^{d})^T)$ defined on some probability space $(\Omega, \mathcal{F}, \mathbb{P})$. We interpret $\slices{X}$ as a sequence of consecutive blocks of length $T > 1$ sampled from a time series taking values in $\mathbb{R}^d$, so that $T$ is the number of time steps and $d$ is the dimension of the state space (see Figure \ref{fig:1}). We give two examples.

\begin{example}
\begin{itemize}
    \item Let $T = 24$ and let $X^n_t \in \R^d$ denote a cumulative trading volume of a set of $d$ stocks on day $n$ in hour $t$. In this case $X^n$ constitutes a daily slice of hourly trading volumes.
    \item Let $T = 168$ and $X^n_t \in \R$ be an average temperature in \SI{}{\celsius} in week $n$ and hour $t$. In other words, $X^n$ is a temperature profile for a given week. 
\end{itemize}
\end{example}
We aim to estimate the finite dimensional distribution $\mu$ of $T$ consecutive steps given the observations $\slices{X}$. As mentioned in the Introduction, we cannot use the empirical measure $\mu^N = \frac{1}{N} \sum_{n = 1}^N \delta_{X^n}$ for this task, as $\lim_{N \to \infty} \mathcal{AW}(\mu, \mu^N) \neq 0$ in general. Below we formally describe the construction from \cite{backhoff2022estimating} which mitigates this issue by clustering observations.

We partition $\mathbb{R}^d$ uniformly into cubes with edges of length $\Delta_N := N^{-r}$, where
\begin{align}\label{eq:def_r}
r = r(d) := \begin{cases}
    \frac{1}{T + 1}, &d = 1,\\
    \frac{1}{dT}, &d \geq 2,
\end{cases}
\end{align}
and let $\varphi^N: \mathbb{R}^d \to \mathbb{R}^d$ be a Borel measurable function mapping each cube to its center. The central object of the paper is the \textit{adapted empirical measure} 
\begin{equation}\label{eqn:adapted_empirical_measure}
\widehat \mu^N := \frac{1}{N} \sum_{n = 1}^N \delta_{\varphi^N(X^n)},
\end{equation}
where we recall the convention $\varphi^N(X^n)=(\varphi^N(X_1^n), \dots, \varphi^N(X_T^n)).$
By definition, $\widehat \mu^N$ is the empirical measure of the observations $\varphi^N(\slices X) = (\varphi^N(X^n))_{n = 1}^N$. The projection $\varphi^N$ discretizes the sample paths, which allows for non-degenerate conditional distributions of $\widehat \mu^N$, as presented in Figure \ref{fig:empirical_vs_adapted}. The goal of the article is to study convergence rates for $\mathcal{AW}(\widehat \mu^N, \mu)$.

\begin{figure}[H]
    \centering
    \includegraphics[scale=0.5]{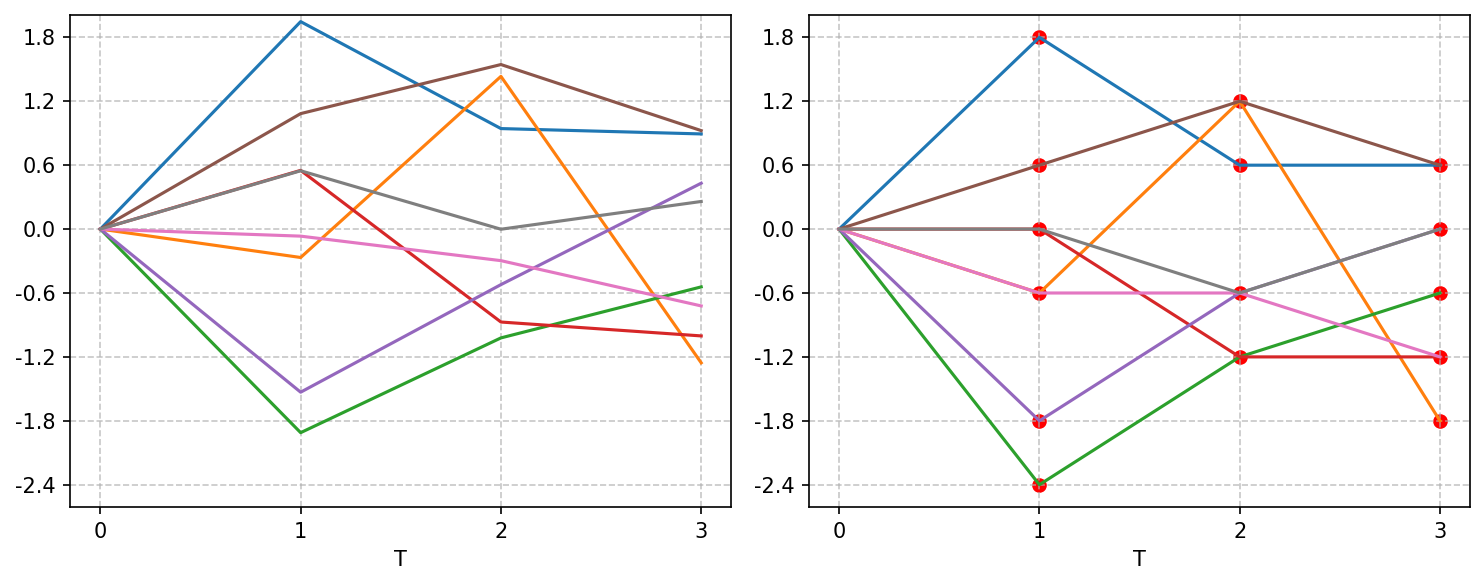}
    \caption{Sample paths of $\mu^N$ (left) and $\widehat \mu^N$ (right) in the case $d = 1$ and $T = 3$. Some trajectories are merged as their values belong to the same interval, hence the corresponding conditional distributions are non-degenerate. }
    \label{fig:empirical_vs_adapted}
\end{figure}

Convergence of $\widehat \mu^N$ is well studied for independent  observations $\slices{X}$, see \cite{backhoff2022estimating,acciaio2024convergence, blanchet2024bounding,hou2024convergence}. However, the i.i.d.~assumption is clearly not satisfied in real-world applications, where the values of the sequence $\slices{X}$ of observations are naturally affected by each other. Typically, this influence decreases as the distance between the observations grows. As is classical in statistics, we introduce a mixing condition to formalize this.

\subsection{Mixing conditions}\label{sec:mixing_conditions}
In this section we first provide a short introduction to the notion of mixing coefficients, and then specify to our setting.

Classically, \textit{mixing coefficients} control the dependence between two $\sigma$-algebras $\mathcal{A}$ and $\mathcal{B}$ for a fixed probability measure $\P$. Famous examples are
\begin{align}
\begin{split}\label{eq:examples}
&\alpha(\mathcal{A}, \mathcal{B}) = \sup_{A \in \mathcal{A}, B \in \mathcal{B}} |\mathbb{P}(A \cap B) - \mathbb{P}(A) \cdot \mathbb{P}(B)|,\\
&\phi(\mathcal{A}, \mathcal{B}) = \sup_{\substack{A \in \mathcal{A}, B \in \mathcal{B}\\\mathbb{P}(B) > 0}} |\mathbb{P}(A \mid B) - \mathbb{P}(A)|;
\end{split}
\end{align}
see \cite{bradley2005basic} for a survey and many other different measures of dependence. One then defines a mixing coefficient for a sequence $\bold Z = (Z_n)_{n = 1}^N$ of random elements by taking $\mathcal{A} = \mathcal{F}_{n+s}^N$ and $\mathcal{B} = \mathcal{F}_1^n$, where $\mathcal{F}_i^j := \sigma(\{Z_n\}_{n = i}^j)$. For \eqref{eq:examples}, this yields
\begin{align*}
&\alpha(s) = \sup_{1 \leq n < N - s + 1} \alpha(\mathcal{F}_{n+s}^N, \mathcal{F}_1^n),\\
&\phi(s) = \sup_{1 \leq n < N - s + 1} \phi(\mathcal{F}_{n+s}^N, \mathcal{F}_1^n).
\end{align*}
One calls $\bold Z$ $\alpha$-mixing if $\alpha(s) \to 0$ or $\phi$-mixing if $\phi(s) \to 0$, as $s \to \infty$. If $\{Z_n\}_{n = 1}^N$ are independent, then $\alpha(s) = \phi(s) = 0$. The mixing conditions above allow for an extension of many classical results in statistics  to non-i.i.d random elements, including a Central Limit Theorem and moment estimates; see \cite{rio2017asymptotic} for details.

One of the main goals in this paper is to derive concentration of measure type results; see e.g. \cite{azuma1967weighted,hoeffding1963probability, talagrand1995concentration, talagrand1996new, ledoux1997talagrand,samson2000concentration} and the references therein. Compared to the i.i.d~case, these are quite naturally more difficult to establish. More specifically we aim to derive an exponential concentration inequality of the form
$$
\mathbb{P}\left(|\varphi(Z_{1:N}) - \mathbb{E}\, \varphi(Z_{1:N})| > \varepsilon\right) \leq \exp(-f(N, \varepsilon))
$$
for some rate function $f(N, \varepsilon)$ and a target function $\varphi$ (which will a function of $\mathcal{AW}$ in our case). Below we will use the martingale differences method to establish such an inequality; 
we refer to \cite[Section 2.2.1-2.2.2]{wainwright2019high} and to the landmark papers \cite{marton1996measure,marton1996bounding, marton1998measure,chatterjee2005concentration,  kontorovich2014concentration, kontorovich2017concentration, kontorovich2008concentration} for a discussion of this approach.
In short, the idea is to decompose $\varphi(Z_{1:N})$ into a sum of martingale differences via the telescoping argument
\begin{align*}
    \varphi(Z_{1:N}) - \mathbb{E}\, \varphi(Z_{1:N}) =\sum_{n=1}^N \E[\varphi(Z_{1:N})|Z_{1:n}] -\E[\varphi(Z_{1:N})|Z_{1:n-1}]
\end{align*}
and then use continuity properties of $\varphi$ to bound the terms on the right hand side. As martingales rely on conditional means, the proof of such a result requires a mixing condition stronger than the $\alpha$-mixing or $\phi$-mixing conditions mentioned above. Indeed, we need to control the distance between the conditional laws $\operatorname{Law}(Z_{n+s} \mid Z_{1:n} = z_{1:n})$ and $\operatorname{Law}(Z_{n+s} \mid Z_{1:n-1} = z_{1:n-1})$ in a uniform sense --- this cannot be guaranteed by $\alpha(s)$ or $\phi(s)$.

Our mixing coefficient is derived from \cite{kontorovich2008concentration}, and is defined as follows:
\begin{definition}\label{def:eta_mixing}
Let $N \in \mathbb{N} \cup \{\infty\}$ and $\bold Z = (Z_n)_{n = 1}^N$ be a sequence of random elements taking values in $\R^k$ and defined on a probability space $(\Omega, \mathcal{F}, \mathbb{P})$. Define
\begin{align}\label{eqn:eta_mixing_definition}
\begin{split}
\eta_{\bold Z}(s) &:= \sup_{\substack{1 \leq n < N - s+1 \\A_{1:n} \in \mathcal{B}(\R^k)^{n}\\\mathbb{P}(Z_{1:n} \in A_{1:n}) > 0}} \operatorname{TV}(\operatorname{Law}(Z_{n+s} \mid Z_{1:n} \in A_{1:n}), \operatorname{Law}(Z_{n+s} \mid Z_{1:n-1} \in A_{1:n-1})),\\
\bar \eta_{\bold Z}(s) &:= \sup_{\substack{1 \leq n < N - s+1 \\A_{1:n} \in \mathcal{B}(\R^k)^{n}\\\mathbb{P}(Z_{1:n} \in A_{1:n}) > 0}} \operatorname{TV}(\operatorname{Law}(Z_{n+s:N} \mid Z_{1:n} \in A_{1:n}), \operatorname{Law}(Z_{n+s:N} \mid Z_{1:n-1} \in A_{1:n-1}))
\end{split}
\end{align}
for $1 \leq s < N$, where we recall that $\operatorname{TV}$ is the total variation distance and we make that convention that $\{n:\,1\le n<\infty\}:=\N$. We call $\eta_{\bold Z}(s)$ the $\eta$-mixing coefficient of $\bold Z$, and $\bar \eta_{\bold Z}(s)$ the tail $\eta$-mixing coefficient.
\end{definition}

Intuitively, the mixing coefficient $\eta_{\bold Z}(s)$ measures how the change of a single variable $Z_n$ influences the distribution of $Z_{n+s}$ in the worst case, given the history of the process $\bold Z$ up to time $(n\!-\!1)$. We emphasize that $\eta_{\bold Z}(s)$ controls the distance between conditional distributions. The reason is that $\operatorname{Law}(Z_{n+s:N} \mid Z_{1:n} = z_{1:n})$ can be approximated by $\operatorname{Law}(Z_{n+s:N} \mid Z_{1:n} \in B_\delta(z_{1:n}))$ for small $\delta > 0$; see Proposition \ref{prop:TV_bound_mixing} for details.

\begin{remark}\label{rem:kontorovich}
As a comparison, the $\eta$-mixing coefficient in \cite{kontorovich2008concentration} is defined as
\begin{equation}\label{eqn:kontorovich_ramanan_eta_mixing}
\widehat \eta_{\bold Z}(n, n+s) := \sup_{\substack{
z_{1:n} \in \mathcal{Z}^n, \; \tilde z_n \in \mathcal{Z}\\\mathbb{P}\left(Z_{1:n} = z_{1:n}\right) > 0\\ \mathbb{P}\left(Z_{1:n} = z_{1:n-1} \tilde z_n\right) > 0}} \operatorname{TV}(\operatorname{Law}(Z_{n+s:N} \mid Z_{1:n} = z_{1:n}), \operatorname{Law}(Z_{n+s:N} \mid Z_{1:n} = z_{1:n-1} \tilde z_n)),
\end{equation}
for $1 \leq n < N$ and $1 \leq s < N - s + 1$, where $\mathcal{Z}$ is a countable space. While similar in spirit, it differs from $\bar \eta_{\bold Z}$ both notationally and conceptually:
\begin{enumerate}
    \item We condition on events $A_{1:n} \in \mathcal{B}(\mathcal{Z})^n$ with $\mathbb{P}(Z_{1:n} \in A_{1:n}) > 0$ rather than on $\{Z_{1:n}=z_{1:n}\}$ only. In consequence, $\bar \eta_{\bold Z}(s)$ is larger. In fact, we emphasize that $\bar \eta_{\bold Z}$ is not equivalent to $\widehat \eta_{\bold Z}$, as shown in Example \ref{ex:difference}. 
    \item We compare conditional probabilities for $Z_{1:n} \in A_{1:n}$ and $Z_{1:n-1} \in A_{1:n-1}$. Equation \eqref{eqn:kontorovich_ramanan_eta_mixing} instead compares two histories of $\bold Z$ that agree up to time $(n\!-\!1)$ and differ at time $n$. In our definition this would correspond to comparing $\operatorname{Law}(Z_{n+s:N} \mid Z_{1:n} \in A_{1:n})$ and $\operatorname{Law}(Z_{n+s:N} \mid Z_{1:n} \in A_{1:n-1}B_n)$ for $A_{1:n} \in \mathcal{B}(\R^k)^n$ and $B_n \in \mathcal{B}(\R^k)$. We adopt the former notation in \eqref{eqn:eta_mixing_definition} as it is more directly aligned with martingale arguments. However, it is important to point out that this difference is simply of a notational nature: indeed, for any $A_{1:n-1} \in \mathcal{B}(\R^k)^{n-1}$,
    \begin{align*}
    &\sup_{\substack{A_n \in \mathcal{B}(\R^k)\\\mathbb{P}(Z_{1:n} \in A_{1:n}) > 0}}\operatorname{TV}\left(\operatorname{Law}(Z_{n+s:N} \mid Z_{1:n} \in A_{1:n}), \operatorname{Law}(Z_{n+s:N} \mid Z_{1:n-1} \in A_{1:n-1})\right)\\
    &\leq \sup_{\substack{A_n, B_n \in \mathcal{B}(\R^k)\\\mathbb{P}(Z_{1:n} \in A_{1:n}) > 0\\\mathbb{P}(Z_{1:n} \in A_{1:n-1}B_n) > 0}} \operatorname{TV}\left(\operatorname{Law}(Z_{n+s:N} \mid Z_{1:n} \in A_{1:n}), \operatorname{Law}(Z_{n+s:N} \mid Z_{1:n} \in A_{1:n-1}B_n)\right)\\
    &\leq 2 \sup_{\substack{A_n \in \mathcal{B}(\R^k)\\\mathbb{P}(Z_{1:n} \in A_{1:n}) > 0}} \operatorname{TV}\left(\operatorname{Law}(Z_{n+s:N} \mid Z_{1:n} \in A_{1:n}), \operatorname{Law}(Z_{n+s:N} \mid Z_{1:n} \in A_{1:n-1})\right),
    \end{align*}
    where the first inequality follows from taking $B_n = \R^k$ and the second follows from the triangle inequality for $\operatorname{TV}$. 
    \item The coefficient  $\widehat \eta_{\bold Z}(n,n+s)$ depends on $n$, while the tail mixing coefficient $\bar \eta_{\bold Z}(s)$ in \eqref{eqn:eta_mixing_definition} only depends on $s$. However, if $\bold Z$ is time-homogeneous, the conditional tail law $\operatorname{Law}(Z_{n+s:N} \mid Z_{1:n})$ is shift-invariant and hence does not depend on $n$ either.
\end{enumerate}
\end{remark}

Defined in this way, the $\eta$-mixing coefficient $\eta_{\bold Z}$ satisfies an information processing inequality (see Proposition \ref{prop:eta_mixing_information_processing} below), that is central for the results in this paper. Indeed, as can be seen from \eqref{eqn:adapted_empirical_measure}, the adapted empirical measure is defined in terms of the projections $\varphi^N(\slices{X}) = (\varphi^N(X^n))_{n = 1}^N$ instead of the original observations $\slices{X} = (X^n)_{n = 1}^N$, and we thus require the mixing properties of $\slices{X}$ to propagate to $\varphi^N(\slices{X})$.

We illustrate Definition \ref{def:eta_mixing} with the following results:

\begin{proposition}\label{prop:independent}
$\bold Z = (Z_n)_{n = 1}^N$ are independent if and only if $\eta_{\bold Z}(s) = 0$ for $s < N$.
\end{proposition}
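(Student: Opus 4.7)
The plan is to handle the two directions separately; both are essentially unwinding Definition \ref{def:eta_mixing}, with the reverse direction requiring a short iterative peeling argument.

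For the forward direction, suppose $\bold Z = (Z_n)_{n = 1}^N$ is independent. Then for any admissible $n$ and any rectangle $A_{1:n} \in \mathcal{B}(\R^k)^n$ with $\mathbb{P}(Z_{1:n} \in A_{1:n}) > 0$, the event $\{Z_{1:n} \in A_{1:n}\}$ lies in $\sigma(Z_1, \ldots, Z_n)$, which is independent of $\sigma(Z_{n+s})$. Hence $\operatorname{Law}(Z_{n+s} \mid Z_{1:n} \in A_{1:n}) = \operatorname{Law}(Z_{n+s})$, and the same identity holds with $A_{1:n}$ replaced by $A_{1:n-1}$. The two conditional laws agree, so their total variation distance is zero and $\eta_{\bold Z}(s) = 0$ for every $s < N$.

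For the reverse direction, I iteratively peel off conditioning variables using the vanishing of $\eta_{\bold Z}(s)$ at every scale $s < N$. Fix $m \in \{2, \ldots, N\}$ and a rectangle $A_{1:m-1} \in \mathcal{B}(\R^k)^{m-1}$ with $\mathbb{P}(Z_{1:m-1} \in A_{1:m-1}) > 0$. Applying $\eta_{\bold Z}(1) = 0$ with $n = m-1$ gives $\operatorname{Law}(Z_m \mid Z_{1:m-1} \in A_{1:m-1}) = \operatorname{Law}(Z_m \mid Z_{1:m-2} \in A_{1:m-2})$; then $\eta_{\bold Z}(2) = 0$ with $n = m-2$ extends the chain; iterating down to $n = 1$ via $\eta_{\bold Z}(m-1) = 0$, and using the convention $Z_{1:0} = \emptyset$ (conditioning on the trivial $\sigma$-algebra returns the unconditional law), I obtain $\operatorname{Law}(Z_m \mid Z_{1:m-1} \in A_{1:m-1}) = \operatorname{Law}(Z_m)$. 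Equivalently, $\mathbb{P}(Z_m \in B,\, Z_{1:m-1} \in A_{1:m-1}) = \mathbb{P}(Z_m \in B)\,\mathbb{P}(Z_{1:m-1} \in A_{1:m-1})$ for every $B \in \mathcal{B}(\R^k)$ and every rectangle $A_{1:m-1}$ of positive measure (and trivially for zero-measure rectangles).

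A standard monotone class argument, using that product measurable rectangles generate the product Borel $\sigma$-algebra on $(\R^k)^{m-1}$, upgrades this to independence of $Z_m$ and $\sigma(Z_1, \ldots, Z_{m-1})$ for every $m \in \{2, \ldots, N\}$. A routine induction based on the chain rule $\mathbb{P}(Z_{1:m} \in B_{1:m}) = \mathbb{E}\bigl[\mathbf{1}_{\{Z_{1:m-1} \in B_{1:m-1}\}}\mathbb{P}(Z_m \in B_m \mid Z_{1:m-1})\bigr]$ then promotes pairwise-conditional independence to full mutual independence of $(Z_1, \ldots, Z_N)$. The only subtlety in the peeling step is ensuring the intermediate conditioning events retain positive probability, but since $\{Z_{1:n} \in A_{1:n}\} \subseteq \{Z_{1:n-1} \in A_{1:n-1}\}$, positivity is automatically preserved as we shrink the conditioning window. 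I do not expect any real obstacle here.
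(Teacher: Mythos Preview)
Your proof is correct and follows essentially the same peeling argument as the paper: both condition on a rectangle $A_{1:n}$ and use $\eta_{\bold Z}(1)=0,\dots,\eta_{\bold Z}(n)=0$ in succession to strip the conditioning down to the unconditional law. You add a little more detail than the paper (the monotone-class upgrade from rectangles to $\sigma(Z_1,\dots,Z_{m-1})$ and the chain-rule induction to mutual independence), but the core idea is identical.
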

\begin{proof}
The ``$\Rightarrow$" part is trivial. To prove ``$\Leftarrow$", suppose that $\eta_{\bold Z}(s) = 0$ for all $1\le s < N$. Then
\begin{align*}
\operatorname{Law}(Z_{n+1} \mid Z_{1:n} \in A_{1:n}) &\xlongequal{\eta_{\bold Z}(1) = 0} \operatorname{Law}(Z_{n+1} \mid Z_{1:n-1} \in A_{1:n-1})\\
&\xlongequal{\eta_{\bold Z}(2) = 0} \operatorname{Law}(Z_{n+1} \mid Z_{1:n-2} \in A_{1:n-2})\\
&\cdots\\
&\xlongequal{\eta_{\bold Z}(n) = 0} \operatorname{Law}(Z_{n+1})
\end{align*}
for all $n < N$ and $A_{1:n} \in \mathcal{B}(\R^k)^{n}$, such that $\mathbb{P}(Z_{1:n} \in A_{1:n}) > 0$, and hence $Z_{n+1:N}$ is independent of $(Z_1, \ldots, Z_n)$. This concludes the proof. 
\end{proof}

\begin{remark}\label{rem:markov}
Let $\bold Z$ be a stationary time-homogenenous Markov process taking values in $\R^k$ with transition kernel $K$, meaning that
$$
\operatorname{Law}(Z_{n+1} \mid Z_n = z_n) = K(\cdot \mid z_n)
$$
for $z_n \in \R^k$. Then Definition \ref{def:eta_mixing} simplifies to 
\begin{align}\label{eqn:eta_mixing_markov}
\begin{split}
\eta_{\bold Z}(s) &= \sup_{\substack{1\le n<N-s+1 \\A_{n-1:n} \in \mathcal{B}(\R^k)\\\mathbb{P}(Z_{n-1:n} \in A_{n-1:n}) > 0}} \operatorname{TV}\left(\operatorname{Law}(Z_{n+s} \mid Z_n \in A_n), \operatorname{Law}(Z_{n+s} \mid Z_{n-1} \in A_{n-1})\right)\\
&= \sup_{\substack{A, B \in \mathcal{B}(\R^k)\\ \mathbb{P}(Z_1 \in A) > 0,\; \mathbb{P}(Z_1 \in B) > 0}} \operatorname{TV}\left(K^{(s)}(\cdot \mid A), K^{(s+1)}(\cdot \mid B)\right),
\end{split}
\end{align}
where 
$K^{(s)}(\cdot \mid A) = \operatorname{Law}(Z_{s+1} \mid Z_1 \in A)$ for $A \in \mathcal{B}(\R^k)$ with $\mathbb{P}(Z_1 \in A) > 0$.
\end{remark}

Notably, the asymptotic behavior of $\eta_{\bold Z}(s)$ is linked to classical notions of mixing and ergodicity of Markov processes, see \cite{seneta2006non} and \cite{dobrushin1956central}.

\begin{example}
Recall the definition of $K^{(s)}$ from above and set $K:=K^{(1)}$. Assume that $\bold Z$ is uniformly ergodic (see \cite{roberts2004general} for equivalent definition up to a factor 2), meaning that for $s \in \mathbb{N}$
\begin{equation}\label{eqn:remark_uniformly_ergodic}
\sup_{z_1, z_2 \in \R^k} \operatorname{TV}(K^{(s)}(\cdot \mid z_1), K^{(s)}(\cdot \mid z_2)) \leq C \cdot \rho^{s},
\end{equation}
where $C > 0$ is an absolute constant and $\rho \in (0, 1)$. Then it follows from the semigroup property of $K$ and convexity of $\operatorname{TV}$ (see \cite[Theorem 4.8]{villani2009optimal}) that
\begin{align*}
\eta_{\bold Z}(s) &\overset{\eqref{eqn:eta_mixing_markov}}{=} \sup_{\substack{A, B \in \mathcal{B}(\R^k)\\ \mathbb{P}(Z_1 \in A) > 0,\; \mathbb{P}(Z_1 \in B) > 0}} \operatorname{TV}\left(K^{(s)}(\cdot \mid A), K^{(s+1)}(\cdot \mid B)\right)\\
&\leq \sup_{\substack{A, B \in \mathcal{B}(\R^k)\\ \mathbb{P}(Z_1 \in A) > 0,\; \mathbb{P}(Z_1 \in B) > 0}} \int \operatorname{TV}\left(K^{(s)}(\cdot \mid A), K^{(s)}(\cdot \mid z_2)\right) K(dz_2 \mid B) \\
&\leq \sup_{z_1, z_2 \in \R^k} \operatorname{TV}(K^{(s)}(\cdot \mid z_1), K^{(s)}(\cdot \mid z_2))\\
&\overset{\eqref{eqn:remark_uniformly_ergodic}}{\leq} C \cdot \rho^{s}.
\end{align*}
Hence, uniform ergodicity of a stationary Markov process implies exponential decay of $\eta_{\bold Z}(s)$.
\end{example}

\begin{example}[Processes with seasonality]
Define $\widecheck{\bold X} = (X_n)_{n = 1}^\infty$ taking values in $\{-1, 0, 1\}$ as follows:
\begin{equation}\label{eqn:seasonality_example}
X_0 = 0, \quad X_{n+1} = \mathds{1}_{\{B_n=0\}} \cdot X_n + \mathds{1}_{\{B_n=1\}} \cdot \varepsilon_{n-\tau} + \mathds{1}_{\{B_n=2\}} \cdot \varepsilon_n
\end{equation}
for $n \geq 0$, where $(\varepsilon_n)_{-\infty}^\infty$ is a sequence of i.i.d.~trinomial random variables, $(B_n)_{n = 0}^\infty$ are i.i.d.~ and independent of $(\varepsilon_n)_{-\infty}^\infty$ with distribution $\rho \cdot \delta_{0} + \Theta \cdot \delta_1 + (1 - \rho - \Theta) \cdot \delta_2$ for memory and seasonality parameters $\rho, \Theta \in (0, 1)$ satisfying $\rho + \Theta < 1$. The parameter $\tau \in \N$ is a seasonality lag.

Such a setup is standard, for instance, in stochastic weather generators, where rainfall occurrence is modeled by a Markov chain (e.g.\ dry/wet) and seasonality is modeled by accounting for a seasonal cycle, see \cite{ailliot2015stochastic} and \cite[Section 6.5]{brockwell2002introduction}.
Define the sequence of pairs
$$
X^n := X_{\tau \cdot n + 1: \tau \cdot n + 2},
$$
which represent the weather's distribution at the start of each period. Observe that for all $n, s \in \N$, $X^{n+s} \perp\!\!\!\perp X^{1:n}$ conditionally on the event $\{\max(B_{\tau \cdot n + 2: \tau \cdot (n+s)}) = 2\}$. This happens with probability $1 - (\rho + \Theta)^{\tau \cdot s - 1}$. Hence, for all $n, s \in \N$ and $A \in \mathcal{B}((\R^2)^n)$ with $\mathbb{P}(X^{1:n} \in A) > 0$,
\begin{align}\label{eqn:seasonality_example_prob}
\begin{split}
\mathbb{P}\!\left(X^{n+s} \mid X^{1:n} \in A\right) &= (1 - (\rho + \Theta)^{\tau \cdot s - 1}) \cdot \mathbb{P}\!\left(X^{n+s} \in S \mid \max(B_{\tau \cdot n + 2: \tau \cdot (n+s)}) = 2\right)\\
&\quad + (\rho + \Theta)^{\tau \cdot s - 1} \cdot \mathbb{P}\!\left(X^{n+s} \in S \mid X^{1:n} \in A, \; \max(B_{\tau \cdot n + 2: \tau \cdot (n+s)}) < 2\right),
\end{split}
\end{align}
and consequently
\begin{align*}
\eta_{\slices X}(s) &\leq \sup_{\substack{1 \leq n < N-s+1\\ A_{1:n} \in \mathcal{B}(\R^2)^n\\ \mathbb{P}(X^{1:n} \in A_{1:n}) > 0}} \operatorname{TV}\!\left(\operatorname{Law}(X^{n+s} \mid X^{1:n} \in A_{1:n}), \operatorname{Law}(X^{n+s} \mid X^{1:n-1} \in A_{1:n-1}\right)\\
&\overset{\eqref{eqn:seasonality_example_prob}}{\leq} 2 (\rho + \Theta)^{\tau \cdot s - 1}.
\end{align*}
\end{example}

We now state two immediate consequences of Definition \ref{def:eta_mixing}. Next to the information processing inequality mentioned above, $\eta_{\bold Z}$ also yields a bound on the covariance of elements in $\bold Z.$
To state these results formally, let $N \in \mathbb{N} \cup \{\infty\}$ and $\bold Z = (Z_n)_{n = 1}^N$ be a stochastic process taking values in $\R^k$, $k\ge 1$. Denote the $\eta$-mixing coefficients of $\bold Z$ by $\eta_{\bold Z}(s)$ and $\bar \eta_{\bold Z}(s)$.

\begin{proposition}[Information processing inequality]\label{prop:eta_mixing_information_processing}
For $l \geq 1$ let $\Phi: \R^k \to \R^l$ be a Borel measurable function. Then the $\eta$-mixing coefficients $\eta_{\Phi(\bold Z)}(s)$ and $\bar \eta_{\Phi(\bold Z)}(s)$ of the sequence $\Phi(\bold Z) := (\Phi(Z_n))_{n = 1}^N$ satisfy
\begin{align*}
\eta_{\Phi(\bold Z)}(s) \leq \eta_{\bold Z}(s) \;\; \text{and} \;\; \bar \eta_{\Phi(\bold Z)}(s) \leq \bar \eta_{\bold Z}(s).
\end{align*}
\end{proposition}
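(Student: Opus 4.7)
The plan is to reduce the supremum defining $\eta_{\Phi(\bold Z)}(s)$ to the one defining $\eta_{\bold Z}(s)$ via two standard observations: preimages of Borel sets under a Borel map are Borel, and the total variation distance is contractive under pushforwards.

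Concretely, I would first fix $s$, $n$ with $1\le n<N-s+1$, and $A_{1:n}\in\mathcal{B}(\R^l)^n$ with $\mathbb{P}(\Phi(Z_{1:n})\in A_{1:n})>0$. Setting $B_i:=\Phi^{-1}(A_i)\in\mathcal{B}(\R^k)$, I have the set identity $\{\Phi(Z_{1:n})\in A_{1:n}\}=\{Z_{1:n}\in B_{1:n}\}$ (with the notation from Section~\ref{sec:notation}), so in particular $\mathbb{P}(Z_{1:n}\in B_{1:n})>0$. Moreover, for any $C\in\mathcal{B}(\R^l)$,
\begin{align*}
\mathbb{P}(\Phi(Z_{n+s})\in C\mid \Phi(Z_{1:n})\in A_{1:n})
=\mathbb{P}(Z_{n+s}\in \Phi^{-1}(C)\mid Z_{1:n}\in B_{1:n}),
\end{align*}
so $\operatorname{Law}(\Phi(Z_{n+s})\mid \Phi(Z_{1:n})\in A_{1:n})=\Phi_{\ast}\operatorname{Law}(Z_{n+s}\mid Z_{1:n}\in B_{1:n})$, and analogously for the shortened condition.

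Next I would invoke the contractivity of total variation under measurable pushforwards, i.e.~$\operatorname{TV}(\Phi_{\ast}\mu,\Phi_{\ast}\nu)\le \operatorname{TV}(\mu,\nu)$, which follows immediately from the variational characterization $\operatorname{TV}(\mu,\nu)=\sup_{D}|\mu(D)-\nu(D)|$ by noting that any set $D\in\mathcal{B}(\R^l)$ defining the pushforward supremum corresponds to $\Phi^{-1}(D)\in\mathcal{B}(\R^k)$ in the original one. Applying this to $\mu=\operatorname{Law}(Z_{n+s}\mid Z_{1:n}\in B_{1:n})$ and $\nu=\operatorname{Law}(Z_{n+s}\mid Z_{1:n-1}\in B_{1:n-1})$ yields
\begin{align*}
&\operatorname{TV}\bigl(\operatorname{Law}(\Phi(Z_{n+s})\mid \Phi(Z_{1:n})\in A_{1:n}),\,\operatorname{Law}(\Phi(Z_{n+s})\mid \Phi(Z_{1:n-1})\in A_{1:n-1})\bigr)\\
&\qquad \le \operatorname{TV}\bigl(\operatorname{Law}(Z_{n+s}\mid Z_{1:n}\in B_{1:n}),\,\operatorname{Law}(Z_{n+s}\mid Z_{1:n-1}\in B_{1:n-1})\bigr)\le \eta_{\bold Z}(s).
\end{align*}
Taking the supremum over the admissible $n$ and $A_{1:n}$ on the left gives $\eta_{\Phi(\bold Z)}(s)\le \eta_{\bold Z}(s)$.

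The argument for the tail coefficient $\bar\eta_{\Phi(\bold Z)}(s)\le \bar\eta_{\bold Z}(s)$ is verbatim the same, applied to the product map $\Phi^{\otimes(N-n-s+1)}$ acting on the block $Z_{n+s:N}$; the conditional law of $\Phi(Z_{n+s:N})$ given $\{\Phi(Z_{1:n})\in A_{1:n}\}$ is the pushforward of $\operatorname{Law}(Z_{n+s:N}\mid Z_{1:n}\in B_{1:n})$ under this product map, and contractivity of $\operatorname{TV}$ again kicks in. There is no real obstacle here; the only thing to be careful about is the Borel measurability of $\Phi^{-1}(A_i)$ (which is precisely the definition of Borel measurability of $\Phi$) and the fact that events $\{\Phi(Z_{1:n})\in A_{1:n}\}$ and $\{Z_{1:n}\in \Phi^{-1}(A_{1:n})\}$ coincide, so positivity of the conditioning event transfers automatically.
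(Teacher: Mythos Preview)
Your proposal is correct and follows essentially the same route as the paper: both arguments rest on the identity $\{\Phi(Z_{1:n})\in A_{1:n}\}=\{Z_{1:n}\in\Phi^{-1}(A_{1:n})\}$ and the fact that the supremum defining $\operatorname{TV}$ for the pushforward ranges only over preimages $\Phi^{-1}(B)$, hence is dominated by $\eta_{\bold Z}(s)$. The paper carries this out by fixing a test set $B\in\mathcal{B}(\R^l)$ and bounding the probability difference directly, whereas you phrase it as contractivity of $\operatorname{TV}$ under pushforwards; these are the same computation.
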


\begin{proof}
We present the proof for $\eta_{\Phi(\bold Z)}(s)$ and fix some $1 \leq s < N$. Let $1 \leq n < N - s + 1$, take $A_{1:n} \in \mathcal{B}((\R^l)^n)$ and $B \in \mathcal{B}(\R^l)$. Assume that $\mathbb{P}(\Phi(Z_{1:n}) \in A_{1:n}) > 0$. Then
\begin{align*}
&\mathbb{P}\left(\Phi(Z_{n+s}) \in B \mid  \Phi(Z_{1:n}) \in A_{1:n}\right) - \mathbb{P}\left(\Phi(Z_{n+s}) \in B \mid  \Phi(Z_{1:n-1}) \in A_{1:n-1}\right)\\
&= \mathbb{P}\left(Z_{n+s} \in \Phi^{-1}(B) \mid  Z_{1:n} \in \Phi^{-1}(A_{1:n})\right) - \mathbb{P}\left(Z_{n+s} \in \Phi^{-1}(B) \mid  Z_{1:n-1} \in \Phi^{-1}(A_{1:n-1})\right)\\
&\leq \operatorname{TV}\left(\operatorname{Law}(Z_{n+s} \mid Z_{1:n} \in \Phi^{-1}(A_{1:n})), \operatorname{Law}(Z_{n+s} \mid Z_{1:n-1} \in \Phi^{-1}(A_{1:n-1}))\right)\\
&\leq \eta_{\bold Z}(s),
\end{align*}
where we have used that $\Phi^{-1}(A_{1:n}) \in \mathcal{B}((\R^k)^n)$ and $\Phi^{-1}(B) \in \mathcal{B}(\R^k)$ since $\Phi$ is Borel measurable; additionally $\mathbb{P}(Z_{1:n} \in \Phi^{-1}(A_{1:n})) > 0$ by the choice of $A_{1:n}$. Taking the supremum over $B \in \mathcal{B}(\R^l)$ and then $1 \leq n < N - s + 1, \; A_{1:n} \in \mathcal{B}(\R^l)^n$ we arrive at the desired conclusion $\eta_{\Phi(\bold Z)}(s) \leq \eta_{\bold Z}(s)$. The inequality $\bar \eta_{\Phi(\bold Z)}(s) \leq \bar \eta_{\bold Z}(s)$ is obtained using precisely the same arguments.
\end{proof}

\begin{proposition}[Covariance bound]\label{prop:eta_mixing_covariance}
Let $f: \R^k \to [0, +\infty)$ be a bounded Borel measurable function. Then for any $1 \leq j < i \leq N$ we have
$$
\operatorname{Cov}\,(f(Z_i), f(Z_j)) \leq \eta_{\bold Z}(i - j) \cdot \|f\|_\infty \cdot \mathbb{E}\, f(Z_j).
$$
\end{proposition}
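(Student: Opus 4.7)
The plan is to rewrite the covariance as $\operatorname{Cov}(f(Z_i), f(Z_j)) = \mathbb{E}[f(Z_j)\,(f(Z_i) - \mathbb{E} f(Z_i))]$, approximate the weight $f(Z_j)$ by a non-negative simple function, and apply the $\eta$-mixing bound event by event. The non-negativity of $f$ enters at the end, when I pull $f(Z_j)$ through and identify the resulting sum of weights with $\mathbb{E} f(Z_j)$.

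The key scalar bound is obtained by specializing Definition \ref{def:eta_mixing} with $n = j$, $s = i-j$ and the trivial prior $A_{1:j-1} = (\R^k)^{j-1}$, which yields
\begin{equation*}
\operatorname{TV}(\operatorname{Law}(Z_i \mid Z_j \in A),\,\operatorname{Law}(Z_i)) \le \eta_{\bold Z}(i-j)
\end{equation*}
for every $A \in \mathcal{B}(\R^k)$ with $\mathbb{P}(Z_j \in A) > 0$. Combined with the standard layer-cake observation $|\int g\,d(\mu-\nu)| \le \|g\|_\infty\,\operatorname{TV}(\mu,\nu)$ for bounded non-negative $g$ and probability measures $\mu,\nu$ (proved by integrating $|\mu(\{g>t\}) - \nu(\{g>t\})|$ in $t\in[0,\|g\|_\infty]$), this upgrades to
\begin{equation*}
\bigl|\mathbb{E}[f(Z_i) \mid Z_j \in A] - \mathbb{E} f(Z_i)\bigr| \le \|f\|_\infty \cdot \eta_{\bold Z}(i-j).
\end{equation*}

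Next I would approximate $f$ from below by non-negative simple functions $f_m = \sum_k c_k^{(m)}\,\mathds{1}_{A_k^{(m)}}$, where $(A_k^{(m)})_k$ is a finite measurable partition of $\R^k$ and $c_k^{(m)} \ge 0$, such that $f_m \uparrow f$ pointwise and $\|f_m\|_\infty \le \|f\|_\infty$. Restricting attention to cells with $\mathbb{P}(Z_j \in A_k^{(m)}) > 0$ (the others contribute nothing), linearity of covariance in the second argument gives
\begin{equation*}
\operatorname{Cov}(f(Z_i),\,f_m(Z_j)) = \sum_k c_k^{(m)}\,\mathbb{P}(Z_j \in A_k^{(m)}) \cdot \bigl(\mathbb{E}[f(Z_i)\mid Z_j \in A_k^{(m)}] - \mathbb{E} f(Z_i)\bigr).
\end{equation*}
Taking absolute values termwise and inserting the scalar bound above yields
\begin{equation*}
\bigl|\operatorname{Cov}(f(Z_i),\,f_m(Z_j))\bigr| \le \|f\|_\infty \cdot \eta_{\bold Z}(i-j) \cdot \mathbb{E} f_m(Z_j),
\end{equation*}
and since $0 \le f_m \uparrow f$ are uniformly bounded by $\|f\|_\infty$, monotone and dominated convergence pass the bound to the limit: $\operatorname{Cov}(f(Z_i), f_m(Z_j)) \to \operatorname{Cov}(f(Z_i), f(Z_j))$ and $\mathbb{E} f_m(Z_j) \to \mathbb{E} f(Z_j)$. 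Removing the absolute value on the left only weakens the inequality, giving the claim.

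The only delicate point is conceptual rather than technical, and it is exactly what motivates the form of Definition \ref{def:eta_mixing}: the bound has to be applied with \emph{event-wise} conditioning $\{Z_j \in A\}$, not pointwise $\{Z_j = z\}$, and with the history $A_{1:j-1}$ permitted to be all of $(\R^k)^{j-1}$. Both freedoms are baked into the supremum in \eqref{eqn:eta_mixing_definition}, so once the scalar bound is available the rest is bookkeeping. A pointwise analogue of the TV bound would require the additional regularization of Proposition \ref{prop:TV_bound_mixing}, which fortunately is not needed here.
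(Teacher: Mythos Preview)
Your proof is correct. The route differs from the paper's in a useful way, so a short comparison is warranted.

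The paper passes to the transformed sequence $f(\bold Z)=(f(Z_n))_{n=1}^N$, bounds the random variable $\mathbb{E}[f(Z_i)\mid f(Z_j)]-\mathbb{E}[f(Z_i)]$ pointwise by $\|f\|_\infty\,\eta_{f(\bold Z)}(i-j)$, invokes the information processing inequality (Proposition~\ref{prop:eta_mixing_information_processing}) to replace $\eta_{f(\bold Z)}$ by $\eta_{\bold Z}$, multiplies through by $f(Z_j)\ge 0$, and takes expectations. This is short, but the pointwise conditional bound on $\{f(Z_j)=w\}$ is strictly speaking only immediate when such singletons have positive probability; otherwise it tacitly uses the almost-sure upgrade of Proposition~\ref{prop:TV_bound_mixing}. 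Your approach stays entirely at the level of event-wise conditioning $\{Z_j\in A\}$ that Definition~\ref{def:eta_mixing} provides, and replaces the pointwise multiplication by a simple-function approximation of $f(Z_j)$. The trade-off: you avoid both Proposition~\ref{prop:eta_mixing_information_processing} and Proposition~\ref{prop:TV_bound_mixing}, making the argument self-contained from the definition, at the price of the monotone/dominated limit step. You also obtain the slightly stronger two-sided bound $|\operatorname{Cov}(f(Z_i),f(Z_j))|\le \eta_{\bold Z}(i-j)\,\|f\|_\infty\,\mathbb{E} f(Z_j)$ for free, which the paper's one-sided multiplication does not give without repeating the argument for $-f$.
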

\begin{proof}
Denote the $\eta$-mixing coefficient of $f(\bold Z) := (f(Z_n))_{n = 1}^N$ by $\eta_{f(\bold Z)}$. Applying \eqref{eqn:eta_mixing_definition} with $s = i - j, \; n = j, \; A_{1:n} = \mathbb{R}^{n-1} \{f(Z_j)\}$ we obtain
\begin{align}\label{eqn:eta_mixing_covariance_start}
\begin{split}
\mathbb{E}\,[f(Z_i) \mid f(Z_j)] - \mathbb{E}\,[f(Z_i)]
&\leq \|f\|_\infty \cdot \operatorname{TV}\,(\operatorname{Law}(f(Z_i) \mid f(Z_j)), \operatorname{Law}(f(Z_i)))\\
&\leq \|f\|_\infty \cdot \eta_{f(\bold Z)}(i - j)\\
&\leq \|f\|_\infty \cdot \eta_{\bold Z}(i - j),
\end{split}
\end{align}
where the first inequality follows from
$$
\int f(x) \, \mu(dx) - \int f(x) \, \nu(dx) \leq  \|f\|_\infty \cdot \operatorname{TV}\,(\mu, \nu)
$$
for any probability distributions $\mu, \nu \in \mathcal{P}(\R^k)$, and the final inequality is due to Proposition \ref{prop:eta_mixing_information_processing}. Multiplying both sides of \eqref{eqn:eta_mixing_covariance_start} by $f(Z_j)$ we obtain
$$
\mathbb{E}\,[f(Z_i) f(Z_j) \mid f(Z_j)] - \mathbb{E}\,[f(Z_i)] \cdot f(Z_j) \leq \|f\|_\infty \cdot \eta_{\bold Z}(|i - j|) \cdot f(Z_j).
$$
Taking the expectation on both sides we arrive at the desired conclusion.
\end{proof}

\section{Main results} \label{sec:main_results}

\subsection{Bounds on $\E\left[\mathcal{AW}_p(\mu, \widehat\mu^N)\right]$} \label{sec:main_expectation}

We are now in a position to present the main results of the paper. For the moment bounds on $\mathcal{AW}_p(\mu, \widehat\mu^N)$ we need the following definition.

\begin{definition}\label{def:Lipschitz_kernels}
Let $\mu \in \mathcal{P}((\R^d)^T)$. We say that $\mu$ has $L$-Lipschitz kernels, if there exists a version of the disintegration $x_{1:t} \mapsto \mu_{x_{1:t}}(dx_{t+1}) \in \mathcal{P}(\R^d)$ that is $L$-Lipschitz with respect to $\mathcal{W}$, for all $t \in \{1, \ldots, T-1\}$. Similarly, we say that $\mu$ has continuous kernels if $x_{1:t} \mapsto \mu_{x_{1:t}}(dx_{t+1})$ admits $\mathcal{W}$-continuous version.
\end{definition}

We now state the moment bound for compactly supported case measures.

\begin{theorem}\label{thm:moment_estimate_compact}
Let $\mu \in \mathcal{P}((\mathbb{R}^d)^T)$ be compactly supported with $L$-Lipschitz kernels. Then
\begin{equation}\label{eqn:moment_estimate_compact}
\mathbb{E}\,\mathcal{AW}(\mu, \widehat \mu^N) \leq C \cdot \sqrt{1 + 2 \sum_{s = 1}^{N-1} \eta_{\slices{X}}(s)} \cdot \operatorname{rate}_{\infty}(N),
\end{equation}
where $C > 0$ depends on $L$, $d$, $T$ and $\operatorname{spt}(\mu)$ and
\begin{equation}\label{eqn:moment_estimate_compact_rate}
\operatorname{rate}_{\infty}(N) := \begin{cases}
N^{- 1 / (T + 1)}, &d = 1,\\
N^{- 1 / (2T)} \log (N + 1), &d = 2,\\
N^{- 1 / (dT)}, &d \geq 3.
\end{cases}
\end{equation}
\end{theorem}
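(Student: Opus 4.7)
The plan is to follow the strategy developed in \cite{backhoff2022estimating, acciaio2024convergence} and adapt it to the mixing setting via the covariance bound of Proposition \ref{prop:eta_mixing_covariance}. The first step is a layerwise bound on $\mathcal{AW}(\mu, \widehat\mu^N)$: by constructing a bicausal coupling inductively over $t$, transporting the conditional laws optimally at each time step, and using the $L$-Lipschitz kernel assumption to shift between histories $x_{1:t-1}$ under $\mu$ and $y_{1:t-1}$ under $\widehat\mu^N$, I obtain a bound of the schematic form
\begin{align*}
\mathcal{AW}(\mu, \widehat\mu^N) \;\leq\; C_{L,T} \sum_{t=1}^T \sum_{G \in \mathcal{G}_{t-1}} \widehat\mu^N(G) \cdot \mathcal{W}\!\left(\mu_G, (\widehat\mu^N)_G\right),
\end{align*}
where $\mathcal{G}_{t-1}$ is the partition of $(\R^d)^{t-1}$ induced by $\varphi^N$, and $\mu_G, (\widehat\mu^N)_G$ denote the conditional laws of the $t$-th coordinate given the first $t-1$ lie in $G$. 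The Lipschitz assumption ensures the telescoping constants stay under control across $T$ time steps and reduces the problem to bounding these conditional Wasserstein terms.

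The main novelty is in estimating each $\widehat\mu^N(G)\,\mathcal{W}(\mu_G,(\widehat\mu^N)_G)$. Because $(\widehat\mu^N)_G$ is \emph{not} distributed as the empirical measure of $\mu_G$ when $\slices{X}$ is not i.i.d., the direct moment estimates of \cite{fournier2015rate} are unavailable. Instead, I would invoke the representation of $\mathcal{W}$ as a weighted sum of cube probability differences (\cite[Lemma 2]{dereich2013constructive}, \cite[Lemma 6]{fournier2015rate}): further partitioning $\R^d$ into cubes $F$ of side $\Delta_N = N^{-r}$ and using $\widehat\mu^N(G)\widehat\mu^N_G(F) = \widehat\mu^N(G\times F)$ together with $\mu(G)\mu_G(F)=\mu(G\times F)$ leads to
\begin{align*}
\widehat\mu^N(G)\cdot\mathcal{W}\!\left(\mu_G,(\widehat\mu^N)_G\right) \;\leq\; C\Delta_N\widehat\mu^N(G) + C\Delta_N \sum_F \bigl[\mu_G(F)\bigl|\widehat\mu^N(G)-\mu(G)\bigr| + \bigl|\mu(G \times F) - \widehat\mu^N(G \times F)\bigr|\bigr].
\end{align*}
The crucial point is that the right-hand side only involves absolute deviations $|\widehat\mu^N(E) - \mu(E)|$ on Borel sets, which no longer require independence to control.

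Next I apply Proposition \ref{prop:eta_mixing_covariance} together with the information processing inequality (Proposition \ref{prop:eta_mixing_information_processing}) to $f = \mathbf{1}_E \circ \varphi^N$. Summing covariances over pairs $(i,j)$ yields
\begin{align*}
\mathbb{E}\bigl|\widehat\mu^N(E)-\mu(E)\bigr|^2 \;\leq\; \frac{\mu(E)}{N}\Bigl(1+2\sum_{s=1}^{N-1}\eta_{\slices{X}}(s)\Bigr),
\end{align*}
up to a bias of order $\Delta_N$ that is absorbed into the first term of the previous display. Taking square roots, summing over $F$ and $G$ via Cauchy--Schwarz using compactness of $\operatorname{spt}(\mu)$ (so at most $C \Delta_N^{-dt}$ cubes are relevant at layer $t$), and then summing over $t=1,\ldots,T$, the common factor $\sqrt{1+2\sum_s \eta_{\slices{X}}(s)}$ factors out and one arrives at a bound of the form $C\bigl(\Delta_N + \Delta_N^{1-dT/2}N^{-1/2}\bigr)$ times this factor. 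The choice $\Delta_N = N^{-r}$ prescribed by \eqref{eq:def_r} balances these two terms and produces $\operatorname{rate}_\infty(N)$.

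The main obstacle is the bookkeeping in the summation/Cauchy--Schwarz step across dimensions. The exponent in the $d=1$ case ($r = 1/(T+1)$) arises from a sharper representation of $\mathcal{W}$ via cumulative distribution functions, which avoids the $\Delta_N^{-d}$ blow-up inside each cell and forces a different balance; the $d=2$ case requires a careful summation over dyadic scales to produce the $\log(N+1)$ factor, mirroring the classical logarithmic correction in \cite{fournier2015rate}. Additional care is needed to propagate the discretization bias $\mathbb{E}[\widehat\mu^N(E)] - \mu(E)$ through the recursion without losing the Lipschitz dependence on the histories, and to ensure that in the final layer-by-layer aggregation the constant $C$ depends on $L$, $d$, $T$ and $\operatorname{spt}(\mu)$ only, as claimed.
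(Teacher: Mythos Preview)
Your high-level strategy is right and matches the paper's: reduce $\mathcal{AW}$ to layerwise conditional Wasserstein terms via the Lipschitz kernels, then rewrite $\mu^N(G)\,|\mu_G(F)-\mu^N_G(F)|$ as a sum of two \emph{unconditional} probability deviations (this is exactly the decomposition in Lemma~\ref{lem:probability_bound_disintegration}) and control those with the covariance bound. The gap is in the displayed single-scale inequality
\[
\widehat\mu^N(G)\cdot\mathcal{W}\bigl(\mu_G,(\widehat\mu^N)_G\bigr)\;\le\;C\Delta_N\,\widehat\mu^N(G)+C\Delta_N\sum_F\bigl[\cdots\bigr].
\]
A single-scale partition of a bounded set into cubes of side $\Delta_N$ only yields $\mathcal{W}\le C\Delta_N+\operatorname{diam}\cdot\tfrac12\sum_F|\mu_G(F)-\nu_G(F)|$, i.e.\ with the \emph{diameter}, not $\Delta_N$, in front of the sum. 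With the correct prefactor your Cauchy--Schwarz balance gives $\Delta_N=N^{-1/(2+dT)}$, strictly worse than the claimed $N^{-1/(dT)}$ for every $d$.

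The remedy is the dyadic Fournier--Guillin bound you cite but then do not use: it is needed in \emph{all} dimensions, not only $d=2$. The paper applies it together with the two-sided estimate $\mathbb{E}|\mu(A)-\mu^N(A)|\le\min\bigl(2\mu(A),\,C_\eta\sqrt{\mu(A)/N}\bigr)$ of Proposition~\ref{prop:probability_bound_mixing} to obtain $\mathbb{E}\bigl[\mu^N(G)\,\mathcal{W}(\mu_G,\mu^N_G)\bigr]\le C\,C_\eta\,\mu(G)\,R_\infty(N\mu(G))$ (Lemma~\ref{lem:wasserstein_distance_bound_disintegration}); the $\min$ is essential, since Cauchy--Schwarz alone on $\sqrt{\mu(F)/N}$ makes the dyadic sum $\sum_l 2^{l(d/2-1)}$ diverge for $d\ge 3$. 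The aggregation over $G$ is then done via concavity of $u\mapsto uR_\infty(u)$ and Jensen (Lemma~\ref{lem:sum_mu_G_W_mu_G_mu_N_G}), which treats $d=1,2,\ge 3$ uniformly and produces the $\log$ factor for $d=2$ automatically---no separate CDF argument is required for $d=1$.
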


\begin{remark}
We observe the influence of the $\eta$-mixing coefficient of $\slices{X}$ on the rate of convergence of $\mathcal{AW}(\mu, \widehat \mu^N)$. For instance, if $\sum_{s = 1}^\infty \eta_{\slices{X}}(s) < \infty$, then the convergence rate is the same as in the independent case, and coincides with \cite[Theorem 1.5]{backhoff2020adapted}. We recall from Proposition \ref{prop:independent} that $\eta_{\slices{X}}(s) = 0$ if $\slices{X}$ are independent.
\end{remark}

To extend this result to the non-compact case, we make a structural assumption on the kernels of $\mu$; see \cite[Definition 2.8]{acciaio2024convergence}.

\begin{definition}\label{def:kernel_decomposition} 
Fix $p \ge 1$, $r \ge 0$, $\alpha > 0$, $\gamma > 0$. We say that $\mu \sim \mathcal{P}((\R^d)^T)$ has \emph{$B$-uniform $p$-th moment (resp. $\mathcal{E}_{\alpha, \gamma}$ moment) noise kernel of growth $r$}, if for any $t \in \{0, \ldots, T-1\}$
$$
\mu_{x_{1:t}} = \operatorname{Law}(f_t(x_{1:t}) + \sigma_t(x_{1:t}) \cdot \varepsilon(x_{1:t})) \qquad \mu(dx_{1:t})\text{-almost everywhere}
$$
for some functions $f_t: (\R^d)^t  \to \R^d$, $\sigma_t: (\R^d)^t \to \mathbb{R}$ and $\varepsilon: (\R^d)^t \to L^p(\Omega, \mathcal{F}, \mathbb{P})$, such that
$$
|\sigma_t(x_{1:t})| \leq B \cdot \|x_{1:t}\|^r \quad \text{and} \quad \sup_{x_{1:t}} \mathbb{E}\,\|\varepsilon(x_{1:t})\|^p \leq B \quad (\text{resp.}\;\sup_{x_{1:t}} \mathcal{E}_{\alpha, \gamma}(\varepsilon(x_{1:t})) \leq B);
$$
for the case $m = 0$ we set $\mu_{x_{1:0}} := \mu^1(dx_1)$, and the corresponding functions $f_0, \sigma_0$ are scalars. \\
We call $x_{1:t} \mapsto \varepsilon(x_{1:t})$ the noise component of $\mu$, and $f_t$ the drift component of $\mu$. 
\end{definition}

When bounding $\E \, \mathcal{W}(\mu,\mu^N)$, it is classically assumed that $\mu$ is $p$-integrable for some $p \in [1, \infty)$; cf.~\cite{fournier2015rate}. The resulting rates then depend on $p$. In the case of the adapted Wasserstein distance, our sample complexity bounds require additional control of the moments of the kernels of $\mu$. To account for this, we make a stronger integrability assumption on the kernels via Definition \ref{def:kernel_decomposition}:

\begin{assumption}\label{asm:uniform_noise_moment}
Let $p\ge 1$, $r \geq 0$ and
$$
q > \begin{cases}
    \max\left(\tfrac{d+1}{d} (r + T - 1), rp + (d + 1) (T - 1) (p - 1)\right),  &d = 1, 2,\\
    \max\big(\tfrac{d}{d-1} (r + T - 1), rp + d (T - 1) (p - 1)\big), &d \geq 3.
\end{cases}
$$
The measure $\mu \in \mathcal{P}_q((\mathbb{R}^d)^T)$ has $L$-Lipschitz kernels and $B$-uniform $p$-th moment noise kernel of growth $r$ with the drift component $f_t(x_{1:t}) := \int x_{t+1}\, \mu_{x_{1:t}}(dx_{t+1})$ for $t \in \{1, \ldots, T-1\}$ and $f_0 := \int x_1 \, \mu^1(dx_1)$.
\end{assumption}

This assumption is a reminiscent of \cite[Setting 2.14]{acciaio2024convergence}, with the difference that the drift component $f_t$ is provided explicitly. We note that if $\mu$ has $L$-Lipschitz kernels, then $f_t$ is $L$-Lipschitz as well, as
$$
\Big|\int x_{t+1} \, \mu_{x_{1:t}}(dx_{t+1}) - \int x_{t+1} \, \mu_{\tilde x_{1:t}}(dx_{t+1})\Big| \leq \mathcal{W}_1(\mu_{x_{1:t}}, \mu_{\tilde x_{1:t}}) \leq L \cdot \|x_{1:t} - \tilde x_{1:t}\|
$$
by the Kantorovich-Rubinstein duality. Assumption \ref{asm:uniform_noise_moment} is satisfied for many classical pricing models, including the Black-Scholes and Heston model with clipped volatility; see \cite[Examples 2.11, 2.12, Remark 2.13]{acciaio2024convergence}.

\begin{theorem}\label{thm:rate_general}
Under Assumption \ref{asm:uniform_noise_moment} with $p \in [1, \infty)$ we have
\begin{equation}\label{eqn:rate_general}
\mathbb{E}\,\mathcal{AW}(\mu, \widehat \mu^N) \leq C \cdot \sqrt{1 + 2 \sum_{s = 1}^{N-1} \eta_{\slices{X}}(s)} \cdot \operatorname{rate}_p(N),
\end{equation}
where $C > 0$ depends on $d, T, p, q, B, L$ and
\begin{equation}\label{eqn:p_rate_function}
\operatorname{rate}_p(N) := N^{-\frac{p-1}{pT}} + \begin{cases}
    N^{-\frac{1}{(d+1)T}}, &d = 1, 2,\\
    N^{-\frac{1}{dT}}, &d \geq 3.
\end{cases}
\end{equation}
\end{theorem}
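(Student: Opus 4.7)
My plan is to reduce the non-compact case to Theorem~\ref{thm:moment_estimate_compact} by a truncation argument in the spirit of \cite[Theorem~2.16]{acciaio2024convergence}, exploiting the kernel decomposition in Assumption~\ref{asm:uniform_noise_moment} to control tails quantitatively. Fix a truncation radius $R>0$ (to be chosen as a power of $N$) and construct a bicausal coupling $\pi \in \Pi_{\mathrm{bc}}(\mu,\widehat\mu^N)$ that uses the compact-case construction on $\{\|x_{1:t}\| \le R\}$ and the product coupling outside. The resulting cost splits into a compact and a tail contribution.

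On $B_R$, Assumption~\ref{asm:uniform_noise_moment} tells me that the kernels $\mu_{x_{1:t}}$ have $L$-Lipschitz drift $f_t$ and noise scale $|\sigma_t(x_{1:t})| \le B R^r$, so the truncated measure $\mu_R$ inherits $L$-Lipschitz kernels on a support of diameter $\lesssim R + BR^r$. Applying Theorem~\ref{thm:moment_estimate_compact} to $\mu_R$ then yields a bound of the form $C R^{r(T-1)+1}\sqrt{1+2\sum_{s=1}^{N-1}\eta_{\slices X}(s)}\cdot \operatorname{rate}_\infty(N)$, where the $R$-prefactor absorbs the scaling of the Lipschitz and diameter constants through the $T$ time steps; crucially, the mixing factor passes through from the compact case via Proposition~\ref{prop:eta_mixing_information_processing}, since truncation is a Borel function of $\slices X$. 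For the tail I would bound the transport cost outside $B_R$ by $\E[\|X^n\|\,\mathbf 1_{\|X^n\|>R}]$; H\"older's inequality together with the $q$-th moment hypothesis gives $\lesssim R^{-(q-1)}$. Because the noise has only a $p$-th moment (not a $q$-th one), an additional intermediate bound on the noise scale is required, and this is what produces the $N^{-(p-1)/(pT)}$ summand in $\operatorname{rate}_p(N)$ once $\Delta_N$ is chosen accordingly. Balancing these three contributions --- the $R$-dependent blow-up from the compact part, the polynomial tail $R^{-(q-1)}$, and the noise-moment term --- and picking $R$ as an appropriate power of $N$ yields the stated rate; the asymmetric exponent conditions on $q$ in Assumption~\ref{asm:uniform_noise_moment} are precisely what is needed to ensure that the tail and noise-moment terms do not dominate the dimensional rate.

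The main difficulty is tracking how the $\eta$-mixing coefficient interacts with the truncation and the kernel decomposition. In the compact case, the factor $\sqrt{1+2\sum_{s=1}^{N-1}\eta_{\slices X}(s)}$ enters through variance estimates for sums of indicators of quantization cubes via Proposition~\ref{prop:eta_mixing_covariance}; after truncating to $B_R$ these indicators are composed with a Borel map, and Proposition~\ref{prop:eta_mixing_information_processing} guarantees the mixing bound survives, but the polynomial blow-up in $R$ from the diameter and Lipschitz constants must be kept outside this factor and prevented from multiplying it in unintended ways. Aligning the exponents dictated by the noise-drift decomposition, the dimensional rate and the tail decay --- while preserving the mixing factor in its compact-case form --- is the essential technical step, and is precisely where the asymmetric conditions on $q$ and the explicit form of the drift $f_t$ in Assumption~\ref{asm:uniform_noise_moment} come into play.
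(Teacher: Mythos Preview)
Your proposal is a global truncation--balancing scheme, but the paper (and \cite[Theorem~2.16]{acciaio2024convergence}, which you cite as your model) does \emph{not} proceed this way. Instead, the paper applies a \emph{per-cube affine rescaling}: for each $G\in\Phi^N_t$ it defines a Borel map $\Phi$ that sends $x_{t+1}\mapsto (x_{t+1}-f_G)/(\lambda_G\sigma_G)$ with $\lambda_G,\sigma_G,f_G$ chosen so that the normalized kernel $\bar\mu_G$ satisfies $\int\|x_{t+1}\|^p\,d\bar\mu_G\le 1$ (this is \cite[Lemma~3.3]{acciaio2024convergence}). One then applies Lemma~\ref{lem:wasserstein_distance_bound_disintegration} directly to $\bar\mu$ and the transformed observations $\Phi(\slices X)$; the mixing factor passes through via Proposition~\ref{prop:eta_mixing_information_processing}, and the scaling identity $\mathcal W(\mu_G,\mu_G^N)=\lambda_G\sigma_G\,\mathcal W(\bar\mu_G,\bar\mu_G^N)$ contributes a factor $\lesssim(1+\|G\|^r)$. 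This yields a sum of the form $\sum_G(1+\|G\|^r)\mu(G)R_p(N\mu(G))$, and from there one follows \cite[proof of Theorem~2.16, Steps~2--3]{acciaio2024convergence} verbatim. In particular the $N^{-(p-1)/(pT)}$ term is not produced by any truncation balance: it is the $u^{-(p-1)/p}$ component of the Fournier--Guillin rate $R_p$ in \eqref{eqn:R}, which appears automatically once the normalized kernel has unit $p$-th moment.

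Your truncation sketch has a concrete gap. You write that on $B_R$ ``the truncated measure $\mu_R$ inherits $L$-Lipschitz kernels on a support of diameter $\lesssim R+BR^r$'', but this is false: even if $\|x_{1:t}\|\le R$, the next coordinate $x_{t+1}=f_t(x_{1:t})+\sigma_t(x_{1:t})\varepsilon(x_{1:t})$ is unbounded because the noise only has a $p$-th moment. So restricting the conditioning variables to $B_R$ does \emph{not} produce a compactly supported measure, and Theorem~\ref{thm:moment_estimate_compact} is not applicable. If instead you truncate all $T$ coordinates, then the kernel decomposition of Assumption~\ref{asm:uniform_noise_moment} is destroyed and the Lipschitz property of the resulting kernels is no longer evident. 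Your remark that ``an additional intermediate bound on the noise scale is required'' is exactly the missing piece, and the paper's per-cube rescaling is precisely how that piece is supplied --- it replaces a separate truncation of the noise by a normalization that feeds the noise moment directly into $R_p$.
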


\begin{remark}
If $\sum_{s = 1}^\infty \eta_{\slices{X}}(s) < \infty$, then we recover \cite[Theorem 2.16]{acciaio2024convergence}.
\end{remark}

\subsection{Concentration inequalities for $\mathcal{AW}(\mu, \widehat{\mu}^N)$}\label{sec:main_concentration}
We now state exponential concentration inequalities for $\mathcal{AW}(\mu, \widehat \mu^N)$. These are derived from the Bounded Differences inequality for mixing sequences, that we prove in Section \ref{sec:concentration}. As before, we start with the compact case.

\begin{theorem}\label{thm:AW_concentration_compact}
Assume that $\mu \in \mathcal{P}((\mathbb{R}^d)^T)$ is compactly supported. Then
\begin{equation}\label{eqn:AW_concentration_compact}
\mathbb{P}\left(\left|\mathcal{AW}(\mu, \widehat \mu^N) - \mathbb{E}\, \mathcal{AW}(\mu, \widehat \mu^N)\right| > \varepsilon\right) \leq 2 \exp\left(-c \cdot N \cdot \frac{\varepsilon^2}{\operatorname{diam}(\operatorname{spt}(\mu))^2 \cdot (1 + \sum_{s = 1}^{N-1} \bar \eta_{\slices X}(s))^2}\right),
\end{equation}
where $c > 0$ depends on $T$.
\end{theorem}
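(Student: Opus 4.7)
The plan is to view $\mathcal{AW}(\mu,\widehat\mu^N)$ as a function $F(\slices{X})$ of the observations and apply the extension of the Kontorovich--Ramanan bounded differences inequality for $\eta$-mixing sequences (which, as the paper announces, is established in Section~\ref{sec:concentration}). The result then follows from estimating the per-coordinate oscillation of $F$ and plugging into the martingale difference bound.

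\textbf{Step 1: Bounded differences for $F$.} Define
\[
F(x^{1:N}) := \mathcal{AW}\!\left(\mu,\tfrac{1}{N}\sum_{n=1}^{N}\delta_{\varphi^N(x^n)}\right).
\]
Fix $n$ and let $\widehat\mu^{N,(n)}$ denote the adapted empirical measure obtained by replacing $X^n$ with any $\tilde X^n$. By the triangle inequality for the metric $\mathcal{AW}$,
\[
|F(\slices{X})-F(\slices{X}^{(n)})|\le \mathcal{AW}(\widehat\mu^N,\widehat\mu^{N,(n)}).
\]
The two adapted empirical measures coincide on $N-1$ atoms and differ in the single atom carrying mass $1/N$. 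Using a coupling that is the identity on the common part and an arbitrary (bicausal, since it is concentrated on a single atom pair) pairing of the two remaining atoms, both of which lie in the cube-grid image of $\operatorname{spt}(\mu)$, yields
\[
\mathcal{AW}(\widehat\mu^N,\widehat\mu^{N,(n)})\le \tfrac{T}{N}\,\operatorname{diam}(\operatorname{spt}(\mu)) + O(T\Delta_N),
\]
i.e.\ an oscillation bound of the form $c_n := C T\operatorname{diam}(\operatorname{spt}(\mu))/N$.

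\textbf{Step 2: Apply the generalized bounded differences inequality.} Once the uniform oscillation $c_n \le CT\operatorname{diam}(\operatorname{spt}(\mu))/N$ is in hand, the generalized Bounded Differences inequality of Section~\ref{sec:concentration} — which extends \cite[Theorem~3]{kontorovich2014concentration} from countable alphabets to uncountable Polish spaces via the tail $\eta$-mixing coefficient $\bar\eta_{\slices X}$ — yields
\[
\mathbb{P}\bigl(|F(\slices{X})-\mathbb{E}F(\slices{X})|>\varepsilon\bigr)\le 2\exp\!\left(-\frac{\varepsilon^{2}}{2\,\|\Gamma_N c\|_2^{2}}\right),
\]
where $\Gamma_N$ is the upper-triangular matrix with $(\Gamma_N)_{n,n}=1$ and $(\Gamma_N)_{n,n+s}\le \bar\eta_{\slices X}(s)$ for $1\le s<N-n+1$. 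Being $L^\infty$-bounded, $F$ is in particular bounded, so this is the natural setting for Azuma-type arguments.

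\textbf{Step 3: Explicit computation.} Since $c_n$ is constant in $n$, the row sums of $\Gamma_N$ are each bounded by $1+\sum_{s=1}^{N-1}\bar\eta_{\slices X}(s)$, hence
\[
\|\Gamma_N c\|_2^{2}\le N\,c_1^{2}\!\left(1+\sum_{s=1}^{N-1}\bar\eta_{\slices X}(s)\right)^{\!2}\!\le \frac{C T^{2}\operatorname{diam}(\operatorname{spt}(\mu))^{2}}{N}\!\left(1+\sum_{s=1}^{N-1}\bar\eta_{\slices X}(s)\right)^{\!2}.
\]
Substituting into the exponential bound of Step~2 yields \eqref{eqn:AW_concentration_compact} with $c=c(T)$.

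\textbf{Main obstacle.} The only non-routine step is Step~1: verifying that exchanging a single sample changes $\mathcal{AW}(\mu,\widehat\mu^N)$ by only $O(1/N)$, despite the rigidity of bicausal couplings. The argument requires choosing a bicausal coupling between $\widehat\mu^N$ and $\widehat\mu^{N,(n)}$ that pairs the $N-1$ unchanged atoms with themselves and matches the two remaining atoms; some care is required to handle the fact that cube-centers in the images $\varphi^N(X^m)$ may coincide across different $m$, which can be resolved by a suitable ordering of the atoms. Everything else reduces to invoking the generalized bounded differences inequality and a straightforward computation of the mixing-matrix norm.
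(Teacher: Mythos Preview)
Your overall strategy matches the paper's exactly: define $F(x^{1:N})=\mathcal{AW}(\mu,\widehat\mu^N(x))$, establish a bounded differences estimate, and invoke Lemma~\ref{lem:bounded_differences_mixing}. Steps~2--3 are fine and agree with the paper (which states the martingale bound directly in terms of $1+\sum_{s}\bar\eta_{\slices X}(s)$ rather than the matrix $\Gamma_N$, but this is cosmetic).

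The gap is in Step~1, precisely where you flag it. Your claim that ``the identity on the common part plus a single-atom pairing is bicausal'' is false. Take $N=T=2$ and
\[
\widehat\mu^N=\tfrac12\delta_{(0,0)}+\tfrac12\delta_{(0,1)},\qquad \widehat\mu^{N,(n)}=\tfrac12\delta_{(0,0)}+\tfrac12\delta_{(1,1)}.
\]
Your coupling $\pi=\tfrac12\delta_{((0,0),(0,0))}+\tfrac12\delta_{((0,1),(1,1))}$ has $\pi_{x_1=0,y_1=0}=\delta_{(0,0)}$, whose first marginal is $\delta_0$, not $(\widehat\mu^N)_{x_1=0}=\tfrac12\delta_0+\tfrac12\delta_1$; hence $\pi\notin\Pi_{\operatorname{bc}}$. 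In fact here the unique bicausal coupling gives $\mathcal{AW}=1$, while your non-bicausal $\pi$ has cost $\tfrac12$, so your inequality would be strict in the wrong direction. The issue has nothing to do with coinciding cube-centers or atom ordering: changing a single observation can alter the conditional structure of $\widehat\mu^N$ at every time step, and a mixture of two bicausal couplings (for different marginals) need not be bicausal for the mixture marginals.

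The paper bypasses this entirely. Instead of constructing a bicausal coupling, it uses the bound $\mathcal{AW}(\nu_1,\nu_2)\le \operatorname{diam}(\operatorname{spt})\cdot T(2T-1)\cdot \operatorname{TV}(\nu_1,\nu_2)$ for compactly supported measures (\cite[Corollary~2.7]{acciaio2025estimating}), together with the trivial estimate $\operatorname{TV}(\widehat\mu^N(x),\widehat\mu^N(y))\le \tfrac1N\sum_n\mathds 1_{\{x^n\neq y^n\}}$. This gives $L\le \operatorname{diam}(\operatorname{spt}(\mu))\cdot T(2T-1)/N$ directly, and the rest of your argument goes through unchanged.
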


\begin{remark}
If $\sum_{s = 1}^\infty \bar \eta_{\slices X}(s) < \infty$, then Theorem \ref{thm:AW_concentration_compact} recovers the subgaussian rate $\exp(-cN \varepsilon^2)$, coinciding with \cite[Theorem 1.7]{backhoff2022estimating}. This includes the case when $\slices X$ are independent. If $\bar \eta_{\slices X}(s) \leq C \cdot s^{-\beta}$ for $C > 0$ and $\beta \in (0, 1)$, then the concentration bound is $\exp(-c N^{2 \beta - 1} \varepsilon^2)$.
\end{remark}

We now turn to the general case. 

\begin{theorem}\label{thm:AW_concentration_general}
Let $\slices{X} = (X^n)_{n = 1}^N \sim \mu \in \mathcal{P}_1((\mathbb{R}^d)^T)$ be a sequence of random elements. Assume that $\mathcal{E}_{\alpha, \gamma}(\mu) < \infty$ for some $\alpha, \gamma > 0$. Then for any $\varepsilon \geq \Delta_N$
\begin{align}\label{eqn:AW_concentration_general}
\begin{split}
&\mathbb{P}\!\left(\left|\mathcal{AW}(\mu, \widehat \mu^N) - \mathbb{E}\left[\mathcal{AW}(\mu, \widehat \mu^N)\right]\right| > \varepsilon\right)\\
&\leq 2 \exp\!\left(-\frac{c}{(1 + \sum_{s = 1}^{N-1} \bar \eta_{\slices X}(s))^2} N^\frac{\alpha}{\alpha+2} \varepsilon^\frac{2\alpha}{\alpha+2}\right) + \mathcal{E}_{\alpha, \gamma}(\mu) \cdot N\exp\!\left(-c \cdot N^\frac{\alpha}{\alpha+2} \varepsilon^\frac{2\alpha}{\alpha+2}\right),
\end{split}
\end{align}
where $C, c > 0$ depend on $\alpha, \gamma, \mathcal{E}_{\alpha, \gamma}(\mu), d, T$.
\end{theorem}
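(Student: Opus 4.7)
The plan is to reduce Theorem \ref{thm:AW_concentration_general} to the compactly supported version Theorem \ref{thm:AW_concentration_compact} via truncation, with the truncation radius calibrated as a function of $N$ and $\varepsilon$. Concretely, fix a parameter $R>0$ (to be optimized later) and let $\Phi_R\colon \R^d\to B_R(0)$ be a Borel measurable truncation map with $\|\Phi_R(x)-x\|\le \|x\|\mathds{1}_{\{\|x\|>R\}}$ (e.g.\ radial projection onto $B_R(0)$, or truncation to a cube of side $2R$ that is aligned with the $\Delta_N$-grid of $\varphi^N$). Define the truncated observations $\slices X_R:=(\Phi_R(X^n))_{n=1}^N$ (applied coordinatewise in time), the truncated law $\mu_R\in\mathcal{P}(B_R(0)^T)$ and the corresponding adapted empirical measure $\widehat\mu_R^N:=\frac1N\sum_{n=1}^N\delta_{\varphi^N(\Phi_R(X^n))}$. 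Then $\operatorname{diam}(\operatorname{spt}(\mu_R))\le 2R\sqrt{Td}$, and by the information processing inequality (Proposition \ref{prop:eta_mixing_information_processing}) one has $\bar\eta_{\slices X_R}(s)\le \bar\eta_{\slices X}(s)$, so Theorem \ref{thm:AW_concentration_compact} applies directly to $\mathcal{AW}(\mu_R,\widehat\mu_R^N)$.

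Next I would run a triangle-inequality decomposition
\[
\big|\mathcal{AW}(\mu,\widehat\mu^N)-\E\mathcal{AW}(\mu,\widehat\mu^N)\big|\le \big|\mathcal{AW}(\mu_R,\widehat\mu_R^N)-\E\mathcal{AW}(\mu_R,\widehat\mu_R^N)\big|+\Delta_R+\E\Delta_R,
\]
where $\Delta_R:=|\mathcal{AW}(\mu,\widehat\mu^N)-\mathcal{AW}(\mu_R,\widehat\mu_R^N)|\le \mathcal{AW}(\mu,\mu_R)+\mathcal{AW}(\widehat\mu^N,\widehat\mu_R^N)$. Using the coordinatewise identity map as a bicausal transport, both $\mathcal{AW}(\mu,\mu_R)$ and $\mathcal{AW}(\widehat\mu^N,\widehat\mu_R^N)$ are bounded by $T$ times the expected per-coordinate truncation displacement, which by the exponential moment hypothesis and Markov's inequality satisfies $\E\|X^1_t-\Phi_R(X^1_t)\|\le C\cdot \mathcal{E}_{\alpha,\gamma}(\mu)\cdot R\exp(-\gamma R^\alpha)$ (after integration by parts on the tail). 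In parallel, the bad event $E_R^c:=\{\exists\,n,t:\|X^n_t\|>R\}$ has probability at most $N T\cdot \mathcal{E}_{\alpha,\gamma}(\mu)\exp(-\gamma R^\alpha)$, and on $E_R$ the truncation is transparent at the level of $\widehat\mu^N$.

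The core quantitative step is then applying Theorem \ref{thm:AW_concentration_compact} to $\mu_R$: for any $\varepsilon'>0$,
\[
\P\!\left(\big|\mathcal{AW}(\mu_R,\widehat\mu_R^N)-\E\mathcal{AW}(\mu_R,\widehat\mu_R^N)\big|>\varepsilon'\right)\le 2\exp\!\left(-\frac{c\,N(\varepsilon')^2}{R^2\,Td\,(1+\sum_{s=1}^{N-1}\bar\eta_{\slices X}(s))^2}\right).
\]
I would then set $R=R(N,\varepsilon)\sim (N\varepsilon^2)^{1/(\alpha+2)}$, which balances the two competing exponents $\gamma R^\alpha$ and $N\varepsilon^2/R^2$ and makes both of them equal (up to constants) to $N^{\alpha/(\alpha+2)}\varepsilon^{2\alpha/(\alpha+2)}$. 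Plugging this into the concentration display yields the first term of \eqref{eqn:AW_concentration_general}, while the union bound over $E_R^c$ together with the tail bound $N\P(\|X^1\|_\infty>R)\le N\mathcal{E}_{\alpha,\gamma}(\mu)\exp(-\gamma R^\alpha)$ gives the second.

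The main obstacles I anticipate are twofold. First, one has to verify that the deterministic truncation errors $\Delta_R$ and $\E\Delta_R$ are indeed $\le \varepsilon/3$ for the chosen scale of $R$; this is where the lower bound $\varepsilon\ge \Delta_N$ becomes useful, since it guarantees that the truncation scale $R$ is large enough relative to the grid size and that the bound is not vacuous. Second, one needs the truncation to play well with the discretization $\varphi^N$, so that the ``truncated empirical measure'' $\widehat\mu_R^N$ really is the adapted empirical measure of $\slices X_R$ in the sense of \eqref{eqn:adapted_empirical_measure}; aligning $\Phi_R$ with the $\Delta_N$-grid (truncating to a cube that is a union of cells) resolves this and preserves the mixing inequality $\bar\eta_{\slices X_R}(s)\le \bar\eta_{\slices X}(s)$ via Proposition \ref{prop:eta_mixing_information_processing}. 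Everything else is bookkeeping in the triangle-inequality split and the optimization in $R$.
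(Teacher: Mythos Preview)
Your proposal is correct and follows essentially the same approach as the paper: truncate via a map $\kappa_R$ to reduce to the compact case (Theorem \ref{thm:AW_concentration_compact}), use Proposition \ref{prop:eta_mixing_information_processing} to control the mixing coefficients of the truncated sequence, bound the truncation error via the exponential moment assumption plus a union bound over the event $\{\exists n:\|X^n\|>R/2\}$, and optimize with $R\asymp (N\varepsilon^2)^{1/(\alpha+2)}$. The paper's only additional wrinkle is that its $\kappa_R$ is chosen smooth and injective (rather than a hard cut or grid-aligned cube), which forces it to track a third term $\mathcal{AW}((\widehat\mu^N)_R,\widehat{\mu_R}^N)$ measuring the failure of $\kappa_R\circ\varphi^N=\varphi^N\circ\kappa_R$ off $B_{R/2}(0)$; your grid-aligned truncation sidesteps this at the cost of losing injectivity, which is harmless here.
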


\begin{remark}
The constraint $\varepsilon \geq \Delta_N$ is not restrictive. In particular, under the conditions of Theorem \ref{thm:rate_general} one gets
\begin{align*}
&\mathbb{P}\!\left(\mathcal{AW}(\mu, \widehat \mu^N) > C \cdot \Bigl(1 + 2 \sum_{s = 1}^{N-1} \eta_{\slices X}(s)\Bigr)^{1/2} \operatorname{rate}_p(N) + \varepsilon\right)\\
&\leq 2 \exp\left(-\frac{c}{(1 + \sum_{s = 1}^{N-1} \bar \eta_{\slices X}(s))^2} N^\frac{\alpha}{\alpha+2} \varepsilon^\frac{2\alpha}{\alpha+2}\right) + \mathcal{E}_{\alpha, \gamma}(\mu) \cdot N\exp\left(-c \cdot N^\frac{\alpha}{\alpha+2} \varepsilon^\frac{2\alpha}{\alpha+2}\right)
\end{align*}
for any $\varepsilon > 0$, as $\Delta_N \leq \operatorname{rate}_p(N)$ and hence will be absorbed by the sample complexity bound.
\end{remark}

\begin{remark}
If $\sum_{s = 1}^\infty \bar \eta_{\slices X}(s) < \infty$, then the second term in \eqref{eqn:AW_concentration_general} dominates the first by a factor of $N$, and we obtain
$$
\mathbb{P}\left(\left|\mathcal{AW}(\mu, \widehat \mu^N) - \mathbb{E}\left[\mathcal{AW}(\mu, \widehat \mu^N)\right]\right| > \varepsilon\right) \leq 3 \mathcal{E}_{\alpha, \gamma}(\mu) \cdot N\exp\left(-c \cdot N^\frac{\alpha}{\alpha+2} \varepsilon^\frac{2\alpha}{\alpha+2}\right).
$$
\end{remark}

\subsection{$\mathcal{AW}$-consistency of $\widehat{\mu}^N$}

We now discuss consistency of $\widehat{\mu}^N$. We start with the compact case.

\begin{theorem}\label{thm:consistency_general}
Let $\mu \in \mathcal{P}((\mathbb{R}^d)^T)$ be compactly supported and let $\slices{X} = (X^n)_{n = 1}^\infty \sim \mu$ satisfy $\sum_{s = 1}^\infty \bar \eta_{\slices X}(s) < \infty$.
\begin{enumerate}
    \item If $\mu$ has continuous kernels (see Def. \ref{def:Lipschitz_kernels}), then
    $$
    \lim_{N \to \infty} \mathcal{AW}(\mu, \widehat \mu^N) = 0 \qquad \mathbb{P}\text{-almost surely}.
    $$
    \item We have
    $$
    \lim_{N\to \infty} \mathcal{AW}(\mu, \widehat \mu^N \ast \mathcal{N}_{\sigma_N}) = 0 \qquad\mathbb{P}\text{-almost surely}
    $$
    for $\sigma_N := \max(\sqrt{\Delta_N}, N^{-\frac{1}{8}})$ and $\mathcal{N}_\sigma := \mathcal{N}(0, \sigma^2 I_{dT})$. 
\end{enumerate}
\end{theorem}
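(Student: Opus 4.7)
The statement has two parts sharing the hypothesis $\sum_{s=1}^\infty \bar\eta_{\slices X}(s)<\infty$; I first note that this implies $\sum_{s=1}^\infty \eta_{\slices X}(s)<\infty$ as well, since total variation decreases under the marginal projection from $Z_{n+s:N}$ to $Z_{n+s}$, and hence $\eta_{\slices X}(s)\le \bar\eta_{\slices X}(s)$ for every $s$.

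For Part~(1), the plan is to first prove the claim under the stronger assumption that $\mu$ has $L$-Lipschitz kernels, and then lift it to continuous kernels by approximation. In the Lipschitz case, Theorem~\ref{thm:moment_estimate_compact} gives $\mathbb{E}\,\mathcal{AW}(\mu,\widehat\mu^N)\to 0$ at the rate $\operatorname{rate}_\infty(N)$, and Theorem~\ref{thm:AW_concentration_compact} supplies a subgaussian concentration bound whose constant does not depend on $N$ (both $\operatorname{diam}(\operatorname{spt}(\mu))$ and $\sum_s\bar\eta_{\slices X}(s)$ are finite). Summing the tail probabilities over $N$ yields a summable series for each fixed $\varepsilon>0$, so Borel--Cantelli combined with the moment bound gives $\mathcal{AW}(\mu,\widehat\mu^N)\to 0$ almost surely. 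To pass from Lipschitz to merely continuous kernels on the compact support $K=\operatorname{spt}(\mu)$, following \cite{backhoff2022estimating}, I would approximate each uniformly $\mathcal W$-continuous kernel map $x_{1:t}\mapsto\mu_{x_{1:t}}$ by Lipschitz kernel maps (for instance via infimal convolution on the Wasserstein space), obtaining $\mu^\varepsilon$ with $\mathcal{AW}(\mu,\mu^\varepsilon)\le\varepsilon$. Since $\widehat\mu^N$ is built from $\slices X\sim\mu$ and not from $\mu^\varepsilon$, I would then construct a direct bicausal coupling between $\mu$ and $\widehat\mu^N$ by transporting $\mu_{x_{1:t}}$ optimally to $\widehat\mu^N_{y_{1:t}}$ at each step $t$; the total cost reduces to $\sum_t \mathbb{E}[\mathcal{W}(\mu_{x_{1:t}},\widehat\mu^N_{y_{1:t}})]$, which vanishes by combining uniform continuity of the kernel map on $K$ with the convergence of the empirical conditionals, the latter propagated through the discretization $\varphi^N$ via Proposition~\ref{prop:eta_mixing_information_processing} and the Lipschitz case.

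For Part~(2), the plan is to use Gaussian smoothing in the spirit of \cite{hou2024convergence}. A triangle-inequality split,
\begin{equation*}
\mathcal{AW}(\mu,\widehat\mu^N\ast\mathcal{N}_{\sigma_N})\le \mathcal{AW}(\mu,\mu\ast\mathcal{N}_{\sigma_N})+\mathcal{AW}(\mu\ast\mathcal{N}_{\sigma_N},\widehat\mu^N\ast\mathcal{N}_{\sigma_N}),
\end{equation*}
bounds the first term by the trivial bicausal coupling $(X,X+Z)$ with $Z\sim\mathcal{N}_{\sigma_N}$, giving $O(T\sigma_N\sqrt{d})\to 0$. For the second term, both $\mu\ast\mathcal{N}_{\sigma_N}$ and $\widehat\mu^N\ast\mathcal{N}_{\sigma_N}$ admit smooth Lebesgue densities, and I would invoke a smoothed-TV bound from \cite{hou2024convergence} of the form
\begin{equation*}
\mathcal{AW}(\mu\ast\mathcal{N}_{\sigma_N},\widehat\mu^N\ast\mathcal{N}_{\sigma_N})\le F(\sigma_N,d,T,\operatorname{diam}(K))\cdot\operatorname{TV}(\mu\ast\mathcal{N}_{\sigma_N},\widehat\mu^N\ast\mathcal{N}_{\sigma_N}),
\end{equation*}
where $F$ blows up polynomially as $\sigma_N\to 0$. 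The smoothed TV mean decays polynomially in $N$ (the density is $\mathcal{N}_{\sigma_N}$-regularized) and concentrates subgaussianly by the general bounded-differences inequality underlying Theorem~\ref{thm:AW_concentration_general}; the choice $\sigma_N=\max(\sqrt{\Delta_N},N^{-1/8})$ is exactly the balance that makes $F(\sigma_N)\cdot\operatorname{TV}$ summable in $N$, and Borel--Cantelli again yields almost sure convergence.

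The main obstacle is the continuity-to-Lipschitz lifting in Part~(1). In the i.i.d.\ regime, \cite{backhoff2022estimating} rely on a Lusin-type argument that uses independence in an essential way; under mixing this fails, so the sequential bicausal coupling must do all the work. Ensuring that the pointwise convergence $\widehat\mu^N_{y_{1:t}}\to\mu_{x_{1:t}}$ holds in a form uniform enough to integrate into the total cost is where Proposition~\ref{prop:eta_mixing_information_processing} and the uniform continuity on the compact set $K$ must be combined most delicately. For Part~(2), the secondary difficulty is that $F(\sigma_N)$ grows polynomially in $\sigma_N^{-1}$ with an exponent depending on $d$ and $T$; the choice $\sigma_N\sim N^{-1/8}$ reflects the slowest decay compatible with overpowering $F$ by the smoothed-TV tail bound needed for the Borel--Cantelli argument.
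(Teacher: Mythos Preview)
Your plan for Part~(1) takes an unnecessary detour. You propose to first handle Lipschitz kernels, then lift to continuous kernels by approximation---and you correctly identify that the Lusin-based lift from \cite{backhoff2022estimating} fails under mixing. But the paper sidesteps this entirely: for continuous kernels on a compact support, \cite[Lemmas~5.1--5.2]{backhoff2022estimating} already give
\[
\mathcal{AW}(\mu,\widehat\mu^N)\le \varepsilon + C(\varepsilon)\Bigl(\mathcal{W}(\mu^1,(\widehat\mu^N)^1)+\sum_{t=1}^{T-1}\sum_{G\in\Phi^N_t}\mu^N(G)\,\mathcal{W}(\mu_G,\mu^N_G)\Bigr)
\]
for $N\ge N_0(\varepsilon)$, where the $\varepsilon$ comes from \emph{uniform} continuity of the kernels. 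The inner sum is then controlled by Lemma~\ref{lem:sum_mu_G_W_mu_G_mu_N_G}, which requires no regularity on the kernels at all. So $\mathbb{E}\,\mathcal{AW}(\mu,\widehat\mu^N)\to 0$ follows directly, and the Lipschitz case is never invoked as an intermediate step. Your final sentence (``propagated\ldots via the Lipschitz case'') is therefore a non sequitur: what you actually need is the empirical-conditional estimate, which is kernel-regularity-free.

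Your Part~(2) plan contains a genuine error: the coupling $(X,X+Z)$ with $Z\sim\mathcal{N}_{\sigma_N}$ independent is \emph{not} bicausal. It is causal from $\mu$ to $\mu\ast\mathcal{N}_\sigma$, but in the reverse direction, conditioning on $Y_{1:t}=X_{1:t}+Z_{1:t}$ leaves residual information about $X_{1:t}$ that leaks into $Y_{t+1:T}$ through the kernels of $\mu$; hence $\pi_{x_{1:t},y_{1:t}}$ does not have second marginal $(\mu\ast\mathcal{N}_\sigma)_{y_{1:t}}$. The convergence $\mathcal{AW}(\mu,\mu\ast\mathcal{N}_{\sigma_N})\to 0$ is true but nontrivial; the paper cites \cite[Theorem~5.5]{hou2024convergence}. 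Second, you omit a key split: the paper inserts $\mu^N$ between $\mu$ and $\widehat\mu^N$, bounding $\operatorname{TV}^{(\sigma_N)}_1(\mu,\mu^N)$ by a covariance estimate plus concentration, and $\operatorname{TV}^{(\sigma_N)}_1(\mu^N,\widehat\mu^N)$ deterministically via the grid error $\Delta_N$. The condition $\sigma_N\ge\sqrt{\Delta_N}$ controls the latter (Proposition~\ref{prop:weighted_total_variation_bound}), while $\sigma_N\ge N^{-1/8}$ ensures $\sigma_N^4 N\ge\sqrt N$, making the concentration tail summable. Your description of $\sigma_N$ as ``the balance that makes $F(\sigma_N)\cdot\operatorname{TV}$ summable'' conflates these two separate roles; moreover the factor $C$ in $\mathcal{AW}^{(\sigma)}\le C\cdot\operatorname{TV}^{(\sigma)}_1$ from \cite{hou2024convergence} is actually independent of $\sigma$ on $(0,1]$---the $\sigma$-dependence enters only through the bounded-differences Lipschitz constant.
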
\

As mentioned in the Introduction, Theorem \ref{thm:consistency_general} distinguishes two cases: if the disintegrations of $\mu$ are continuous, then the proof of \cite[Theorem 1.3]{backhoff2022estimating} can be extended using the moment estimates and concentration inequalities established in Section \ref{sec:main_expectation} and \ref{sec:main_concentration} for mixing sequences. For general $\mu$, we cannot follow the methodology of  \cite{backhoff2022estimating} directly. In particular, an application of Lusin's theorem to construct approximating measures with continuous kernels seems infeasible. For that reason, we instead convolve the adapted empirical measure with a Gaussian kernel and resort to a different method, similarly to \cite{hou2024convergence}. In fact, the distance between Gaussian-smoothed distributions can be estimated using a covariance bound which follows from Proposition \ref{prop:eta_mixing_covariance}. In either case, we require that the tail $\eta$-mixing coefficients decay fast enough.

We extend Theorem \ref{thm:consistency_general} to the non-compact case in the same way as \cite[Theorem 2.7]{acciaio2024convergence} via compact approximation. To achieve almost-sure convergence, we strengthen the decay assumption on the tail $\eta$-mixing coefficient and the integrability assumption on $\mu$; this allows to derive a version of the Strong Law of Large Numbers necessary in the approximation argument.

\begin{theorem}\label{thm:consistency_general_noncompact}
Let $\mu \in \mathcal{P}_p((\mathbb{R}^d)^T)$ for some $p > 1$ and let $\slices{X} = (X^n)_{n = 1}^\infty \sim \mu$ satisfy
$$
\sum_{s = 1}^\infty \sqrt{\sum_{k = 2^s}^\infty \bar \eta_{\slices X}(k)} < \infty.
$$
Then \textit{(1)} and \textit{(2)} from Theorem \ref{thm:consistency_general} still hold.
\end{theorem}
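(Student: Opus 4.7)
The plan is to extend Theorem \ref{thm:consistency_general} from compactly supported $\mu$ to the non-compact case via a truncation argument, reducing the problem to the compact case together with a Strong Law of Large Numbers (SLLN) for the empirical tail moment of $\slices{X}$. Fix a smooth coordinatewise retraction $\pi_R : (\R^d)^T \to K_R$ onto a compact set $K_R$ of diameter $O(R)$; define $\mu^R := (\pi_R)_\# \mu$, $X^{n,R} := \pi_R(X^n)$, and the associated adapted empirical measure $\widehat\mu^{N,R}$ built from $(X^{n,R})_{n=1}^N$. The triangle inequality yields
\[
\AW(\mu, \widehat\mu^N) \le \AW(\mu, \mu^R) + \AW(\mu^R, \widehat\mu^{N,R}) + \AW(\widehat\mu^{N,R}, \widehat\mu^N),
\]
and an analogous inequality after convolution with $\mathcal{N}_{\sigma_N}$ handles part \textit{(2)}.

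The first summand on the right is non-random and bounded by $T \cdot \E[\|X^1\| \mathds{1}_{\|X^1\| > R}]$, which tends to $0$ as $R \to \infty$ by $\mu \in \mathcal{P}_p$ with $p > 1$ and Markov's inequality; the third summand admits the corresponding empirical bound $T \cdot \frac{1}{N}\sum_{n=1}^N \|X^n\| \mathds{1}_{\|X^n\| > R}$. The middle summand vanishes almost surely as $N \to \infty$ for each fixed $R$ by Theorem \ref{thm:consistency_general} applied to the compactly supported $\mu^R$: the mixing hypothesis passes to $\slices{X}^R := (X^{n,R})_n$ via Proposition \ref{prop:eta_mixing_information_processing}, and continuity of kernels (needed for part \textit{(1)}) is preserved under a smooth $\pi_R$. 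Intersecting countably many almost sure events over $R \in \mathbb{N}$, the whole problem reduces to showing that the empirical tail moment converges almost surely to its expectation.

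The main obstacle is establishing this SLLN for $\slices{X}$ under the strengthened mixing hypothesis. The strategy is to apply a concentration inequality of the Bounded Differences type for mixing sequences (the linear-functional counterpart of the mechanism behind Theorems \ref{thm:AW_concentration_compact}--\ref{thm:AW_concentration_general}, established in Section \ref{sec:concentration}) to the bounded truncations $Y_N^{R,M} := \frac{1}{N}\sum_{n=1}^N (\|X^n\| \wedge M)\mathds{1}_{\|X^n\| > R}$. Evaluated along the dyadic subsequence $N_s = 2^s$, this yields a tail bound of the form $\exp\bigl(-c N_s \epsilon^2 / (M^2 (1 + \sum_{k=1}^{N_s - 1}\bar{\eta}_{\slices{X}}(k))^2)\bigr)$; combining its summability in $s$ with an interpolation bound on $|Y_N^{R,M} - Y_{N_s}^{R,M}|$ for $2^s \le N < 2^{s+1}$ (where the interpolation error is controlled by $\sum_{k \ge 2^s} \bar{\eta}_{\slices{X}}(k)$) and Borel-Cantelli gives $Y_N^{R,M} \to \E f_{R,M}(X^1)$ almost surely. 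The dyadic-chain form of the hypothesis $\sum_s \sqrt{\sum_{k \ge 2^s} \bar{\eta}_{\slices{X}}(k)} < \infty$ is tailored to make exactly this dyadic Borel-Cantelli step succeed. Finally, the passage $M \to \infty$ is handled by uniform integrability of $\|X^1\| \mathds{1}_{\|X^1\| > R}$ under $\mu \in \mathcal{P}_p$, $p > 1$. Part \textit{(2)} follows by the same scheme, as the Gaussian smoothing enters only through the compact-approximation middle term.
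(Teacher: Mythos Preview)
Your overall architecture—truncate to a compact via a coordinatewise retraction, apply Theorem \ref{thm:consistency_general} to the truncation, and control the remainder by an empirical tail moment—is exactly the paper's strategy. The paper packages the remainder into $\tau^N_R$ (Lemma \ref{lem:tau_estimate}), but the content is the same: everything reduces to the SLLN
\[
\frac{1}{N}\sum_{n=1}^N \|X^n\|\,\mathds{1}_{\{\|X^n\|\ge R/2\}} \ \longrightarrow\ \int_{\{\|x\|\ge R/2\}}\|x\|\,\mu(dx)\qquad\text{a.s.}
\]

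The gap is in how you obtain this SLLN. Your plan is to apply the bounded-differences concentration (Lemma \ref{lem:bounded_differences_mixing}) to the truncation $f_{R,M}(x)=(\|x\|\wedge M)\mathds{1}_{\{\|x\|>R\}}$, pass along a dyadic subsequence, and then send $M\to\infty$ ``by uniform integrability''. Two points fail here. First, the dyadic hypothesis already forces $\sum_k\bar\eta_{\slices X}(k)<\infty$, so Lemma \ref{lem:bounded_differences_mixing} gives a tail bound $\exp(-cN\varepsilon^2/M^2)$ that is summable over \emph{all} $N$; no dyadic subsequence or interpolation is needed, and your claim that the interpolation error is controlled by $\sum_{k\ge 2^s}\bar\eta_{\slices X}(k)$ is unsupported (a pathwise difference of two empirical averages has nothing to do with tail mixing coefficients). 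Second, and more seriously, the $M\to\infty$ step is circular: the truncation error is
\[
\Big|\frac{1}{N}\sum_{n}\|X^n\|\mathds{1}_{\{\|X^n\|>R\}}-Y_N^{R,M}\Big|\ \le\ \frac{1}{N}\sum_n\|X^n\|\mathds{1}_{\{\|X^n\|>M\}},
\]
which is the \emph{same} unbounded tail average you are trying to control, now at level $M$. Uniform integrability of $\|X^1\|$ under $\mu$ only controls the \emph{expectation} of this term, not its $\limsup_N$ pathwise; you need an honest SLLN for an unbounded integrand, which subgaussian concentration for bounded functions cannot deliver on its own.

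The paper resolves this cleanly by not reinventing the SLLN: it passes from $\eta$-mixing to $\phi$-mixing via Proposition \ref{prop:phi_mixing} (giving $\phi_{\slices X}(s)\le\sum_{k\ge s}\eta_{\slices X}(k)$), and then invokes the Marcinkiewicz--Zygmund SLLN for $\phi$-mixing sequences from \cite{kuczmaszewska2011strong}, whose hypothesis is precisely $\sum_s\phi^{1/2}(2^s)<\infty$. This is the true role of the dyadic condition $\sum_s\sqrt{\sum_{k\ge 2^s}\bar\eta_{\slices X}(k)}<\infty$: it is tailored to meet the $\phi$-mixing SLLN assumption, not to a Borel--Cantelli argument on a dyadic grid. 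Replacing your ad-hoc SLLN sketch by this citation (or by reproducing a genuine truncation-at-level-$n^{1/p}$ argument in the $\phi$-mixing style) would close the gap.
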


\begin{remark}
The condition $\sum_{s = 1}^\infty \sqrt{\sum_{k = 2^s}^\infty \bar \eta_{\slices X}(k)} < \infty$ does not follow from $\sum_{s = 1}^\infty \bar \eta_{\slices X}(s) < \infty$. However, it holds, for instance, when $\bar \eta_{\slices X}(s) \leq C \cdot s^{-\alpha}$ for some $\alpha > 1$. 
\end{remark}

\section{Numerical experiments: Estimating $\mathcal{AW}(\mu, \widehat \mu^N)$ for $T = 2$} \label{sec:numerics}

In this section we study bounds on $\mathcal{AW}(\mu, \widehat \mu^N)$ for $T = 2$ numerically. We are particularly interested in showing that an increase of the time lag (and thus a change of the dependence structure) between subsequent observations changes the distribution of $\mathcal{AW}(\mu, \widehat \mu^N)$; cf. Figure \ref{fig:1}.

For this we define $\widecheck{\bold X} := (X_n)_{n = 0}^\infty$ as follows:
$$
X_{n+1} = X_n \cdot B_n + \varepsilon_n \cdot (1 - B_n), \qquad X_0 := \varepsilon_0,
$$
where $(\varepsilon_n)_{n = 0}^\infty$ is a sequence of i.i.d.~trinomial random variables, $(B_n)_{n = 0}^\infty$ are i.i.d.~$\operatorname{Ber}(\rho)$ independent of $(\varepsilon_n)_{n = 0}^\infty$, and $\rho \in (0, 1)$ is a memory parameter. We simulate $\widecheck{\bold X}$ and consider the pairs $\slices X := (X^n)_{n = 0}^\infty$ given via
$$
X^n := X_{D \cdot n + 1: D \cdot n + 2} \sim \mu \in \mathcal{P}(\R^2),
$$
where $D \in \mathbb{N}$ is the distance between consecutive sample paths. Note that the stationary distribution $\mu$ is given by
\begin{align*}
\mu(dx_{1:2}) = \mu^1(dx_1) \otimes \mu_{x_1}(dx_2), \;\text{ where }\; \mu^1 = \tfrac{1}{3} \cdot (\delta_{-1} + \delta_0 + \delta_{1}), \quad \mu_{x_1} = \rho \cdot \delta_{x_1} + (1 - \rho) \cdot \mu^1.
\end{align*}
In this section we compute $\mathcal{AW}(\mu, \widehat \mu^N)$ numerically and see how the convergence depends on $D$: a large value of $D$ corresponds to an ``almost" independent scenario, while smaller values make $X^n$ and $X^{n+1}$ more dependent (in the extreme case $D = 1$ we have $X^n_2 = X^{n+1}_1$). To see this mathematically, we first compute a theoretical bound for $\eta_{\slices X}(s)$.

Take $n, s \in \N$ and observe that $X^{n+s} \perp\!\!\!\perp X^{1:n}$, conditionally on the event $\{\min(B_{D\cdot n+2:D\cdot (n+s)}) = 0\}$. Thus, for any $S \in \mathcal{B}(\R^2)$ and $A \in \mathcal{B}((\R^2)^n)$ with $\mathbb{P}(X^{1:n} \in A) > 0$ we have
\begin{align}\label{eqn:numerics_1}
\begin{split}
\mathbb{P}\left(X^{n+s} \in S \mid X^{1:n} \in A\right) &= (1 - \rho^{Ds-1}) \cdot \mathbb{P}\left(X^{n+s} \in S \mid  \min(B_{D\cdot n+2:D\cdot (n+s)}) = 0\right)\\
&\quad + \rho^{Ds-1} \cdot \mathbb{P}\left(X^{n+s} \in S \mid X^{1:n} \in A, \; \min(B_{D\cdot n+2:D\cdot (n+s)}) = 1\right),
\end{split}
\end{align}
where we have used $\mathbb{P}(\min(B_{D \cdot n+2:D \cdot (n+s)}) = 1) = \rho^{Ds-1}$ and the law of total probability. Therefore,
\begin{equation}\label{eqn:numerics_eta}
|\mathbb{P}\left(X^{n+s} \in S \mid X^n \in A\right) - \mathbb{P}\left(X^{n+s} \in S \mid X^{n-1} \in B\right)| \overset{\eqref{eqn:numerics_1}}{\leq} 2 \rho^{Ds-1},
\end{equation}
and thus $\eta_{\slices X}(s) \leq 2 \rho^{Ds-1}$,
which follows from the former inequality by taking a supremum over $S \in \mathcal{B}(\R^2)$, then $A, B \in \mathcal{B}(\R^2)$ with $\mathbb{P}(X^1 \in A) \wedge \mathbb{P}(X^1 \in B) > 0$, and finally $1 \leq n < N - s + 1$. As a consequence, we anticipate stronger concentration and convergence as $D$ grows. 

To observe the phenomena qualitatively, we take $\rho = 0.99$ and compute $M = 5000$ values of $\mathcal{AW}(\mu, \widehat \mu^N)$ for $M$ independent samples $\slices X$ using linear programming. Recall that
$
\mathbb{E}\,\mathcal{AW}(\mu, \widehat \mu^N) \leq C \cdot \sqrt{1 + 2 \sum_{s = 1}^{N-1} \eta_{\slices X}(s)} \cdot N^{-\frac{1}{3}}
$
according to Theorem \ref{thm:moment_estimate_compact}. As $\rho$ is close to 1, the term $\sqrt{1 + 2 \sum_{s = 1}^{N-1} \eta_{\slices X}(s)}$ grows as $\sqrt{N}$ for $N \lesssim \frac{1}{D}$ per \eqref{eqn:numerics_eta}; for $N > \frac{1}{D}$ it is absorbed by the rate $N^{-\frac{1}{3}}$. Such a behavior is captured by our experiments; see the empirical convergence rate and concentration presented in Figure \ref{fig:aw_rate} and Figure \ref{fig:aw_concentration} respectively.

\begin{figure}[H]
    \centering
    \includegraphics[scale=0.30]{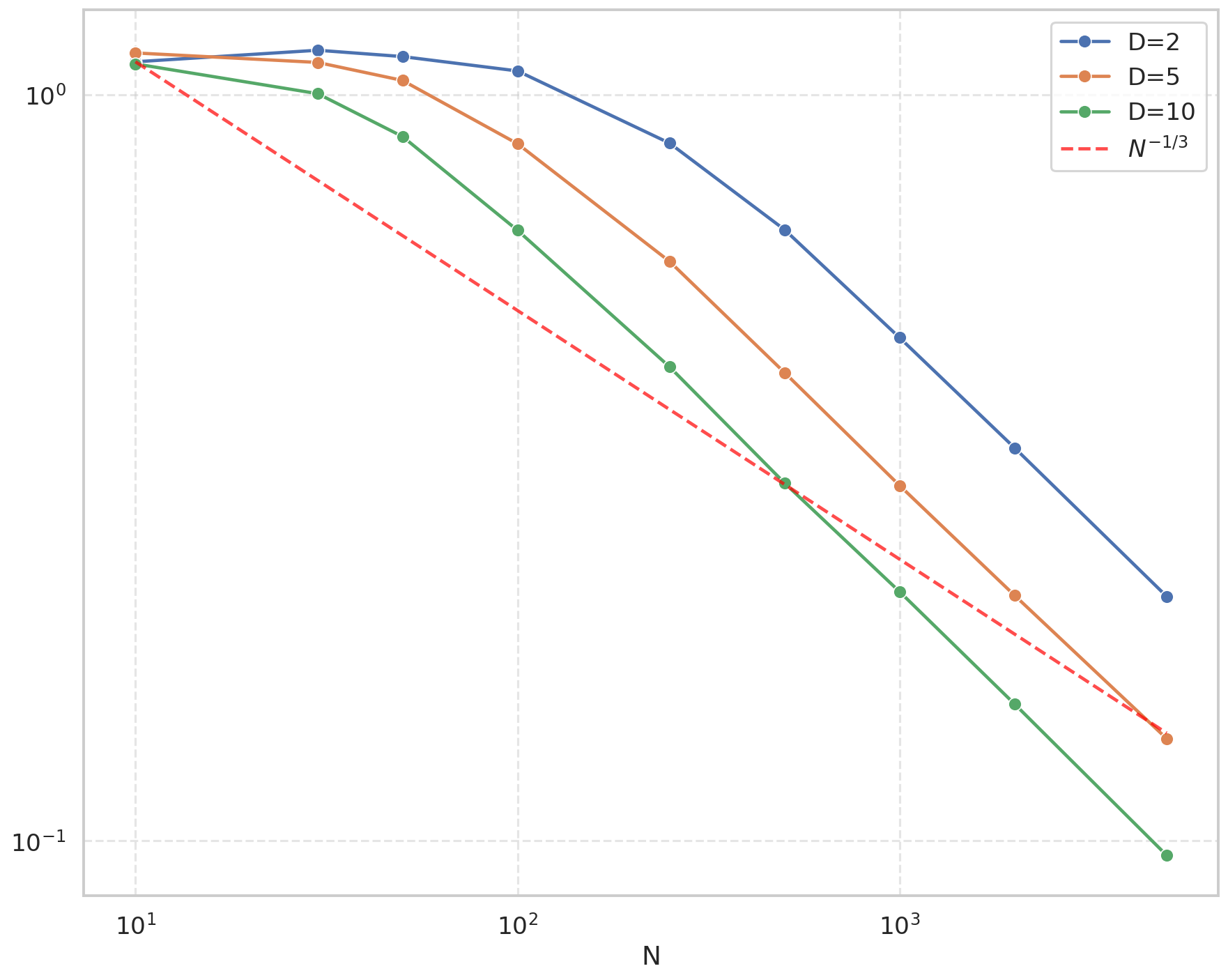}
    \caption{Rate of convergence of $\mathbb{E}\,\mathcal{AW}(\mu, \widehat \mu^N)$ for $D = 2, 5, 10$ compared to the theoretical upper bound $C \cdot \sqrt{1 + 2 \sum_{s = 1}^{N-1} \eta_{\slices X}(s)} \cdot N^{-1/3}$ from Theorem \ref{thm:moment_estimate_compact}. The expected value is estimated using empirical average over $M = 5000$ independent samples.}
    \label{fig:aw_rate}
\end{figure}

\begin{figure}[H]
    \centering
    \includegraphics[scale=0.25]{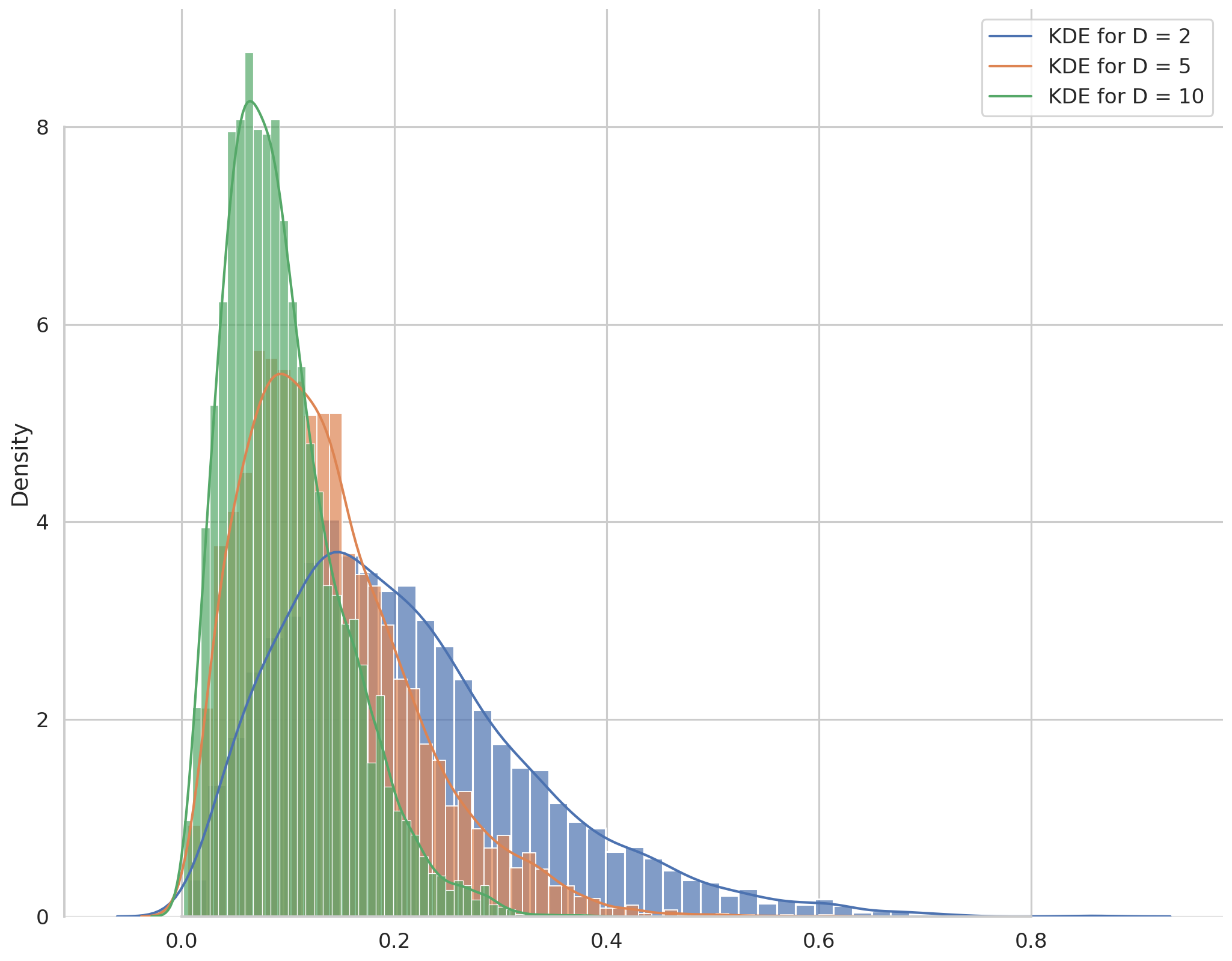}
    \caption{Empirical distribution of $\mathcal{AW}(\mu, \widehat \mu^N)$ for $D = 2, 5, 10$ over $M = 5000$ samples for $N = 5000$. }
    \label{fig:aw_concentration}
\end{figure}

In Figure \ref{fig:aw_rate}, one can see that smaller $D$ implies stronger dependence, which leads to a slower convergence to zero. Similarly, in Figure \ref{fig:aw_concentration}, smaller $D$ implies stronger dependence, which leads to a more skewed distribution with larger tails.

\section{Proofs for Section \ref{sec:main_expectation}}\label{sec:moment_estimate}

As mentioned in the Introduction, our methodology in this section resembles \cite{backhoff2022estimating} with a subtle difference: if $\slices{X}$ are not independent, then the law of the empirical measure of the disintegration is no longer the same as the law of the disintegration of the empirical measure, i.e. $(\mu^N)_G \sim (\mu_G)^{N \mu^N(G)}$ may not hold. Hence it is not possible to use the moment estimates from \cite{fournier2015rate} directly. However, as stated in \cite[Section 7.1]{fournier2015rate}, one can extend the methodology of \cite{fournier2015rate} to the case of mixing observations. To achieve this, we estimate $\mu^N(G) \cdot \mathcal{W}(\mu_G, (\mu^N)_G)$ where $\mu_G(\cdot) := \mu(\cdot \mid x_{1:t} \in G)$ for Borel sets $G$, directly via the Bayes formula and the covariance bound stated in Proposition \ref{prop:eta_mixing_covariance}. 

We first introduce additional notation that we use in the proofs. Recall the definition of $r$ in \eqref{eq:def_r} and denote by $\Phi^N_t$ a partition of $\operatorname{spt}(\mu)$ into products of cubes with edges of length $\Delta_N$; formally,
\begin{align}\label{eqn:Phi_t_definition}
\Phi^N_t := \left\{([0, \Delta_N)^{d})^t + \Delta_N \cdot z\;: \; z \in (\mathbb{Z}^{d})^t\right\} \cap \operatorname{spt}(\mu).
\end{align}
Given a probability measure $\mu \in \mathcal{P}((\mathbb{R}^d)^T)$ and any $G \in \Phi^N_t$, denote by $\mu_G \in \mathcal{P}(\mathbb{R}^d)$ the conditional distribution $\mu(dx_{t+1} \mid  x_{1:t} \in G)$. Given the observations $\slices{X} = (X^n)_{n = 1}^N \sim \mu \in \mathcal{P}((\mathbb{R}^d)^T)$, denote by $\mu^N = \frac{1}{N} \sum_{n = 1}^N \delta_{X^n}$ the empirical probability measure and set
$$
\mu^N_G := \frac{1}{|I_G|} \sum_{n \in I_G} \delta_{X_{t+1}^n},
$$
where $I_G := \{n \in \{1, \ldots, N\}\,:\, X_{1:t}^n \in G\}$, with the convention  $\mu^N_G := \delta_0$ if $I_G=\emptyset.$

We now proceed with the proofs. We start with some auxiliary statements for the proof of Theorem \ref{thm:rate_general}. The first one is similar to \cite[Theorem 14]{fournier2015rate}.
\begin{proposition}\label{prop:probability_bound_mixing}
Let $\slices{X} = (X^n)_{n = 1}^N \sim \mu \in \mathcal{P}((\mathbb{R}^d)^T)$. Then for any set $A \in \mathcal{B}((\mathbb{R}^d)^T)$ we have
$$
\mathbb{E}\,|\mu(A) - \mu^N(A)| \leq \min\left(2 \mu(A), \sqrt{1 + 2 \sum_{s = 1}^{N-1} \eta_{\slices{X}}(s)} \cdot \frac{\sqrt{\mu(A)}}{\sqrt N}\right).
$$
\end{proposition}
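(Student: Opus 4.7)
The proposition gives two competing bounds on $\mathbb{E}\,|\mu(A) - \mu^N(A)|$, so I would handle them separately and take the minimum at the end.

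The first bound $2\mu(A)$ is immediate: since $X^n \sim \mu$, we have $\mathbb{E}\,\mu^N(A) = \mu(A)$, and by the triangle inequality $\mathbb{E}\,|\mu(A)-\mu^N(A)| \leq \mu(A) + \mathbb{E}\,\mu^N(A) = 2\mu(A)$. This requires no mixing assumption and does not use independence; it just uses that each $X^n$ is marginally distributed according to $\mu$.

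For the second bound, the plan is to apply Cauchy--Schwarz together with the covariance bound from Proposition \ref{prop:eta_mixing_covariance}. Since $\mathbb{E}\,[\mu^N(A)] = \mu(A)$, Cauchy--Schwarz gives
\begin{align*}
\mathbb{E}\,|\mu(A) - \mu^N(A)| \leq \sqrt{\operatorname{Var}(\mu^N(A))} = \sqrt{\tfrac{1}{N^2}\sum_{i,j=1}^N \operatorname{Cov}(\mathds{1}_{\{X^i\in A\}}, \mathds{1}_{\{X^j\in A\}})}.
\end{align*}
The diagonal terms ($i=j$) each contribute $\operatorname{Var}(\mathds{1}_{\{X^i\in A\}}) = \mu(A)(1-\mu(A)) \leq \mu(A)$, giving a total contribution of at most $N\mu(A)$. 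For $i\ne j$, I would apply Proposition \ref{prop:eta_mixing_covariance} with $f = \mathds{1}_A$ (which is bounded nonnegative with $\|f\|_\infty = 1$ and $\mathbb{E}\,f(X^j) = \mu(A)$) to obtain $\operatorname{Cov}(\mathds{1}_{\{X^i\in A\}}, \mathds{1}_{\{X^j\in A\}}) \leq \eta_{\slices{X}}(|i-j|)\cdot \mu(A)$.

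Combining these, and reindexing by $s = |i-j|$, the off-diagonal contribution is at most
\begin{align*}
2\mu(A) \sum_{s=1}^{N-1}(N-s)\,\eta_{\slices{X}}(s) \leq 2 N \mu(A) \sum_{s=1}^{N-1} \eta_{\slices{X}}(s).
\end{align*}
Plugging back yields $\operatorname{Var}(\mu^N(A)) \leq \tfrac{\mu(A)}{N}\bigl(1 + 2\sum_{s=1}^{N-1}\eta_{\slices{X}}(s)\bigr)$, which after taking the square root gives the desired bound. Overall this proof is essentially routine; the only subtle step is invoking Proposition \ref{prop:eta_mixing_covariance} correctly (i.e., noting that symmetry of covariance lets us always apply it with the smaller index playing the role of $j$), but no real obstacle arises.
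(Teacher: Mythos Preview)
Your proposal is correct and follows essentially the same approach as the paper. The only cosmetic difference is that the paper packages the variance computation into a separate statement (Proposition~\ref{prop:eta_mixing_variance}) and cites it, whereas you inline that computation directly from the covariance bound of Proposition~\ref{prop:eta_mixing_covariance}; the underlying steps (triangle inequality for the first bound, Jensen/Cauchy--Schwarz plus the covariance estimate for the second) are identical.
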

\begin{proof}
First, by the triangle inequality and $\mathbb{E}\, \mu^N(A) = \mu(A)$ we have
\begin{align*}
\mathbb{E}\,|\mu(A) - \mu^N(A)| \leq \mu(A) + \mathbb{E}\, \mu^N(A) = 2 \mu(A).
\end{align*}
Next, applying Proposition \ref{prop:eta_mixing_variance} with $f = \mathds{1}_A$, we get
\begin{align*}
\left(\mathbb{E}\,|\mu(A) - \mu^N(A)|\right)^2 \stackrel{\text{(Jensen)}}{\leq} \operatorname{Var}\left[\mu^N(A)\right] \leq \frac{\mu(A)}{N} \cdot \left(1 + 2 \sum_{s = 1}^{N-1} \eta_{\slices{X}}(s)\right).
\end{align*}
Combining the inequalities above completes the proof.
\end{proof}

\begin{lemma}\label{lem:probability_bound_disintegration}
Let $\slices{X} = (X^n)_{n = 1}^N \sim \mu \in \mathcal{P}((\mathbb{R}^d)^T)$. Then for any $G \in \Phi^N_t$ and $A \in \mathcal{B}(\mathbb{R}^d)$ we have
\begin{equation}\label{eqn:probability_bound_disintegration}
\mathbb{E}\,[\mu^N(G) |\mu_G(A) - \mu^N_G(A)|] \leq C \cdot \sqrt{1 + 2 \sum_{s = 1}^{N-1} \eta_{\slices{X}}(s)} \cdot \mu(G) \cdot \min\left(4 \mu_G(A), 2 \frac{\sqrt{\mu_G(A)}}{\sqrt{N \mu(G)}}\right),
\end{equation}
where $C > 0$ is an absolute constant.
\end{lemma}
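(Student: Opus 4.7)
The plan is to reduce the disintegration-weighted deviation to two ordinary probability-deviation terms and then apply Proposition \ref{prop:probability_bound_mixing}. Fix $G \in \Phi^N_t$ and $A \in \mathcal{B}(\mathbb{R}^d)$, and set $\tilde H := G \times A \times (\mathbb{R}^d)^{T-t-1} \in \mathcal{B}((\mathbb{R}^d)^T)$. The key observation is the identity
\begin{align*}
\mu^N(G)\,\mu^N_G(A) &= \frac{1}{N}\sum_{n=1}^N \mathds{1}_G(X^n_{1:t})\,\mathds{1}_A(X^n_{t+1}) = \mu^N(\tilde H),
\end{align*}
together with $\mu(\tilde H) = \mu(G)\,\mu_G(A)$. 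Using this, I would write
\begin{align*}
\mu^N(G)\bigl(\mu^N_G(A) - \mu_G(A)\bigr) = \bigl[\mu^N(\tilde H) - \mu(\tilde H)\bigr] - \mu_G(A)\bigl[\mu^N(G) - \mu(G)\bigr],
\end{align*}
and after taking absolute values and expectations, bound the left-hand side by
\begin{align*}
\mathbb{E}\bigl|\mu^N(\tilde H) - \mu(\tilde H)\bigr| + \mu_G(A)\cdot \mathbb{E}\bigl|\mu^N(G) - \mu(G)\bigr|.
\end{align*}

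Next I would apply Proposition \ref{prop:probability_bound_mixing} to each of these two summands, which yields, writing $C_N := \sqrt{1 + 2\sum_{s=1}^{N-1}\eta_{\slices X}(s)} \geq 1$,
\begin{align*}
\mathbb{E}\bigl|\mu^N(\tilde H) - \mu(\tilde H)\bigr| &\leq \min\!\Bigl(2\mu(G)\mu_G(A),\, C_N\,\sqrt{\mu(G)\mu_G(A)/N}\Bigr),\\
\mathbb{E}\bigl|\mu^N(G) - \mu(G)\bigr| &\leq \min\!\Bigl(2\mu(G),\, C_N\,\sqrt{\mu(G)/N}\Bigr).
\end{align*}

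It remains to combine these two estimates into the required $\min$ on the right-hand side of \eqref{eqn:probability_bound_disintegration}. For the first term of the $\min$, taking the first arguments of both bounds yields a total of $4\mu(G)\mu_G(A) \leq 4 C_N \mu(G)\mu_G(A)$, matching $C_N \mu(G) \cdot 4\mu_G(A)$. For the second term of the $\min$, the first summand is already $C_N\sqrt{\mu(G)\mu_G(A)/N}$, while for the second I would use $\mu_G(A) \leq \sqrt{\mu_G(A)}$ (since $\mu_G(A) \leq 1$) to get
\begin{align*}
\mu_G(A)\cdot C_N\,\sqrt{\mu(G)/N} \leq C_N\,\sqrt{\mu(G)\mu_G(A)/N},
\end{align*}
so the total is $2 C_N \sqrt{\mu(G)\mu_G(A)/N}$, matching $C_N\mu(G)\cdot 2\sqrt{\mu_G(A)/(N\mu(G))}$. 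Taking the smaller of the two bounds in each regime delivers the claim with $C = 1$.

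There is no serious obstacle here: the only subtle step is the identity $\mu^N(G)\mu^N_G(A) = \mu^N(\tilde H)$ which reduces a conditional deviation into an unconditional one, so that Proposition \ref{prop:probability_bound_mixing} can be applied directly. All remaining work is the routine bookkeeping of combining the two regimes in the $\min$.
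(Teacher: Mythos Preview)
Your proposal is correct and follows essentially the same route as the paper: the identity $\mu^N(G)\mu^N_G(A)=\mu^N(\tilde H)$ together with the add-and-subtract of $\mu(\tilde H)$ gives exactly the paper's decomposition $\mu_G(A)\,|\mu^N(G)-\mu(G)| + |\mu(A\times G)-\mu^N(A\times G)|$, after which Proposition~\ref{prop:probability_bound_mixing} is applied to each term and the two $\min$'s are combined using $\min(x,y)+\min(a,b)\le\min(x+a,y+b)$ and $\mu_G(A)\le\sqrt{\mu_G(A)}$. The only differences are notational.
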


\begin{proof}
We start by decomposing the integrand. Subtracting and adding $\mu(A \times G)$ and using triangle inequality we obtain
\begin{align}\label{eqn:central_decomposition}
\begin{split}
\mu^N(G) |\mu_G(A) - \mu^N_G(A)|
&= |\mu^N(G) \mu_G(A) - \mu^N(A \times G)|\\
&\leq |\mu^N(G)\mu_G(A) - \mu(A \times G)| + |\mu(A \times G) - \mu^N(A \times G)|\\
&= \mu_G(A) |\mu^N(G) - \mu(G)| + |\mu(A \times G) - \mu^N(A \times G)|.
\end{split}
\end{align}
Applying Proposition \ref{prop:probability_bound_mixing} with the pairs $(\mu, G \times (\mathbb{R}^d)^{(T - t)})$ and $(\mu, G \times A \times (\mathbb{R}^d)^{(T - t - 1)})$ and denoting $\sqrt{1 + 2 \sum_{s = 1}^{N-1} \eta_{\slices{X}}(s)}$ by $C_\eta$, we get the following bound:
\begin{align*}
&\mathbb{E}\,[\mu^N(G) |\mu_G(A) - \mu^N_G(A)|]\\
&\overset{\eqref{eqn:central_decomposition}}{\leq} \mu_G(A) \min\left(2 \mu(G), C_\eta \cdot \frac{\sqrt{\mu(G)}}{\sqrt N}\right) + \min\left(2 \mu(A \times G), C_\eta \cdot\frac{\sqrt{\mu(A \times G)}}{\sqrt N}\right)\\
&\leq \min\left(2 \mu_G(A) \mu(G) + 2 \mu(A \times G), \frac{C_\eta}{\sqrt{N}} \left[\mu_G(A) \sqrt{\mu(G)} + \sqrt{\mu(A \times G)}\right]\right)\\
&= \mu(G) \min\left(4 \mu_G(A), \frac{C_\eta}{\sqrt{N \mu(G)}} \left[\mu_G(A) + \sqrt{\mu_G(A)}\right]\right)\\
&\leq \mu(G) \min\left(4 \mu_G(A), 2 C_\eta \cdot \frac{\sqrt{\mu_G(A)}}{\sqrt{N \mu(G)}}\right),
\end{align*}
where the second inequality follows from $\min(x, y) + \min(a, b) \leq \min(x + a, y + b)$ and the final inequality holds because $x \leq \sqrt{x}$ for $x \in [0, 1]$. The proof is complete as $C_\eta \geq 1$ by non-negativity of $\eta_{\slices{X}}$.
\end{proof}

For the next Lemma, we follow ideas from \cite[Proof of Theorem 1]{fournier2015rate} quite closely.

\begin{lemma}\label{lem:fournier_estimate}
Let $\nu \in \mathcal{P}(\R^d)$ satisfy $\int \|x\|^p \, \nu(dx) \leq 1$ for some $p > 1$. Then for any $M > 0$ we have
\begin{equation}\label{eqn:fournier_estimate}
\sum_{n \geq 0} 2^n \sum_{l \geq 0} 2^{-l} \sum_{F \in \mathcal{P}_l} \min\left(2 \nu(2^n F \cap B_n), \sqrt{\frac{\nu(2^n F \cap B_n)}{M}}\right) \leq C \cdot R_p(M),
\end{equation}
where $C > 0$ depends on $d, p$. The collection $\mathcal{P}_l$ is the natural partition of $(-1,1]^d$ into $2^{dl}$ translations of $(-2^{-l}, 2^{-l}]^d$ and the partition $B_n$ is defined as
\begin{align}\label{eqn:fournier_estimate_partition}
\begin{split}
B_0 &:= (-1, 1]^d, \quad B_n := (-2^n, 2^n]^d \setminus (-2^{n-1}, 2^{n-1}]^d \;\; \text{for} \;\; n \in \mathbb{N},
\end{split}
\end{align}
for $l \geq 0$ and $n \geq 0$, and
\begin{equation}\label{eqn:R}
R_p(u) := \begin{cases}
u^{-\frac{p-1}{p}}, &p < \infty,\\
0, &p = \infty
\end{cases} + \begin{cases}
u^{- 1 / 2}, &d = 1,\\
u^{- 1 / 2} \log (u + 3), &d = 2,\\
u^{- 1 / d}, &d \geq 3.
\end{cases}
\end{equation}
\end{lemma}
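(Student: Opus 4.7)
The plan is to follow the proof of Theorem~1 in \cite{fournier2015rate}, whose structure is a double dyadic decomposition (over the radial annuli $B_n$ and over the scale-$l$ cubes in $\mathcal{P}_l$), combined with Cauchy--Schwarz in $F$ and H\"older's inequality in $n$ against the moment bound. Abbreviate $a_{n,F} := \nu(2^n F \cap B_n)$ and note the two elementary facts $\sum_{F \in \mathcal{P}_l} a_{n,F} \leq \nu(B_n)$ and $|\mathcal{P}_l| = 2^{dl}$. The moment hypothesis translates into $\sum_{n \geq 0} 2^{np} \nu(B_n) \leq C(p)$, since $B_n \subseteq \{\|x\| \geq 2^{n-1}\}$ for $n \geq 1$.

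For each fixed $n$, I will estimate the inner sum $S_n := \sum_{l \geq 0} 2^{-l} \sum_{F \in \mathcal{P}_l} \min(2 a_{n,F}, \sqrt{a_{n,F}/M})$ via two bounds: the trivial bound $\sum_F 2 a_{n,F} \leq 2\nu(B_n)$, and the Cauchy--Schwarz bound $\sum_F \sqrt{a_{n,F}} \leq 2^{dl/2}\sqrt{\nu(B_n)}$. After incorporating the $2^{-l}$ weight, I choose a threshold $l^{\ast}(n)$ that balances the two, using the Cauchy--Schwarz estimate for $l \leq l^{\ast}$ and the trivial one for $l > l^{\ast}$. The three dimensional regimes in $R_p(M)$ arise here: $\sum_{l \leq l^{\ast}} 2^{(d/2 - 1)l}$ converges geometrically for $d = 1$, contributes a factor $l^{\ast}$ for $d = 2$, and is dominated by its largest term for $d \geq 3$; the tail $\sum_{l > l^{\ast}} 2^{-l} \cdot 2\nu(B_n)$ is geometric and contributes the same order. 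After optimization this yields, up to constants depending on $d$, a bound of the form $S_n \leq C \min\bigl(\nu(B_n),\, h_d(M\nu(B_n)) \cdot \nu(B_n)\bigr)$, where $h_d(u) \leq C u^{-1/2}$ for $d = 1$, $h_d(u) \leq C u^{-1/2}\log(u+3)$ for $d = 2$, and $h_d(u) \leq C u^{-1/d}$ for $d \geq 3$.

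Next I sum $\sum_n 2^n S_n$. Splitting the $n$-sum at a cutoff $n^{\ast}(M)$ where the dimensional bound meets the trivial bound $\nu(B_n)$, and applying H\"older's inequality with exponents $d/(d-1)$ and $d$ (respectively $2$ and $2$ for $d \leq 2$) paired against $\sum_n 2^{np} \nu(B_n) \leq C(p)$, produces the two summands of $R_p(M)$: the dimensional term $M^{-1/d}$ (respectively $M^{-1/2}$ or $M^{-1/2}\log(M+3)$) comes from the bulk $n \leq n^{\ast}(M)$, and the moment term $M^{-(p-1)/p}$ comes from the tail $n > n^{\ast}(M)$, where only the integrability hypothesis is used. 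The exponent $(p-1)/p$ appears precisely because $\sum_{n > n^{\ast}} 2^n \nu(B_n) \leq C \cdot 2^{-n^{\ast}(p-1)} \sum_n 2^{np} \nu(B_n)$, and $2^{-n^{\ast}}$ is of order $M^{-1/p}$ at the balance point.

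The main obstacle is bookkeeping: in each dimensional regime one must verify that the optimizations of $l^{\ast}(n)$ and $n^{\ast}(M)$ are internally consistent and that the final estimate matches $R_p(M)$ up to constants depending only on $d$ and $p$. The borderline case $d = 2$ is the most delicate, since the logarithmic factor arises from summing up to the threshold $l^{\ast}(n)$ itself, and one must be careful not to generate an extraneous logarithm when passing to the $n$-sum. Beyond this accounting, every step is an elementary application of Cauchy--Schwarz, H\"older's inequality, or summation of a geometric series.
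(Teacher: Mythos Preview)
Your proposal is correct and follows essentially the same route as the paper: both reduce the $F$-sum via Cauchy--Schwarz and the trivial bound to obtain $\min\bigl(2\nu(B_n),\,2^{dl/2}\sqrt{\nu(B_n)/M}\bigr)$, and then carry out the dyadic threshold analysis of \cite[proof of Theorem~1, Steps~1--4]{fournier2015rate}. The only organizational difference is that the paper immediately replaces $\nu(B_n)$ by $2^{-p(n-1)}$ via Markov's inequality, obtaining a $\nu$-independent double sum that it identifies with \cite[Equation~(4)]{fournier2015rate} and then cites directly, whereas you keep $\nu(B_n)$ in the computation and use the summability $\sum_n 2^{np}\nu(B_n)\le C(p)$ together with H\"older at the end; the two bookkeepings are equivalent.
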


\begin{proof}
We first apply the inequality $\sum_i \min(a_i, b_i) \leq \min(\sum_i a_i, \sum_i b_i)$ and the Cauchy-Schwarz inequality to obtain
\begin{align}\label{eqn:fournier_estimate_start}
\begin{split}
\sum_{F \in \mathcal{P}_l} \min\left(2 \nu(2^n F \cap B_n), \sqrt{\frac{\nu(2^n F \cap B_n)}{M}}\right) &\leq \min\left(2 \nu(B_n), \sum_{F \in \mathcal{P}_l} \sqrt{\frac{\nu(2^n F \cap B_n)}{M}}\right)\\
&\overset{|\mathcal{P}_l| = 2^{dl}}{\leq} \min\left(2 \nu(B_n), 2^\frac{dl}{2} \sqrt{\frac{\nu(B_n)}{M}}\right)\\
&\leq 2^{p+1} \cdot \min\left(2^{-pn}, 2^{\frac{dl}{2}} \sqrt{\frac{2^{-pn}}{M}} \right),
\end{split}
\end{align}
where the final inequality follows from $\int \|x\|^p \, \nu(dx) \leq 1$ and Markov's inequality as $\nu(B_n) \leq \mu(\{x \in \R^d\,:\,\|x\| \geq 2^{n-1}\}) \leq 2^{-p(n-1)}$. Consequently,
\begin{align}\label{eqn:fournier_estimate_prefinal}
\begin{split}
&\sum_{n \geq 0} 2^n \sum_{l \geq 0} 2^{-l} \sum_{F \in \mathcal{P}_l} \min\left(2 \nu(2^n F \cap B_n), \sqrt{\frac{\nu(2^n F \cap B_n)}{M}}\right)\\
&\overset{\eqref{eqn:fournier_estimate_start}}{\leq} 2^{p+1} \cdot \sum_{n \geq 0} 2^n \sum_{l \geq 0} 2^{-l} \min\left(2^{-pn}, 2^{\frac{dl}{2}} \sqrt{\frac{2^{-pn}}{M}} \right),
\end{split}
\end{align}
where the last term is the rhs of \cite[Equation (4), proof of Theorem 1]{fournier2015rate} (choosing $p=1, q=p,\mu=\nu, N=M$ in their notation) up to a multiplicative constant which depends on $d, p$. Finally, from \cite[proof of Theorem 1, Steps 1-4]{fournier2015rate} it follows that
\begin{equation}\label{eqn:fournier_estimate_steps}
\sum_{n \geq 0} 2^n \sum_{l \geq 0} 2^{-l} \min\left(2^{-pn}, 2^{\frac{dl}{2}} \sqrt{\frac{2^{-pn}}{M}} \right) \leq C \cdot R_p(M),
\end{equation}
where $C > 0$ depends on $d, p$. The proof is complete, as \eqref{eqn:fournier_estimate_prefinal} and \eqref{eqn:fournier_estimate_steps} imply \eqref{eqn:fournier_estimate}.
\end{proof}

\begin{lemma}\label{lem:wasserstein_distance_bound_disintegration}
Let $\mu \in \mathcal{P}((\mathbb{R}^d)^T)$ satisfy $(\int \|x\|^p \, \mu_G(dx))^{1/p} \leq 1$ for some $p > 1$ and $G \in \Phi^N_t$, and let $\slices{X} \sim \mu$. Then
$$
\mathbb{E}\,[\mu^N(G) \mathcal{W}(\mu_G, \mu^N_G)] \leq C \cdot \sqrt{1 + 2 \sum_{s = 1}^{N-1} \eta_{\slices{X}}(s)} \cdot \mu(G) \cdot R_p(N \mu(G)),
$$
where $C > 0$ depends on $d, p$; the rate function $R_p$ is defined in \eqref{eqn:R}.
\end{lemma}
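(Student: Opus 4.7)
The plan is to bound $\mathcal{W}(\mu_G, \mu_G^N)$ by a dyadic decomposition of $\mathbb{R}^d$, reduce the problem to bounds on individual probability differences of the form $|\mu_G(A) - \mu_G^N(A)|$, apply Lemma~\ref{lem:probability_bound_disintegration} termwise, and then conclude by summing the dyadic series via Lemma~\ref{lem:fournier_estimate}. This is precisely the adaptation of the Fournier--Guillin strategy from \cite[proof of Theorem 1]{fournier2015rate} to the mixing setting: the usual i.i.d.\ route would first condition on $\mu^N(G)$ and use $(\mu^N)_G \sim (\mu_G)^{N\mu^N(G)}$, but this identity fails for mixing data, which is why the whole product $\mu^N(G)\mathcal{W}(\mu_G,\mu_G^N)$ (rather than $\mathcal{W}(\mu_G,\mu_G^N)$ alone) is estimated directly, with the dependence absorbed by Lemma~\ref{lem:probability_bound_disintegration}.

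First, I would invoke the dyadic upper bound on the Wasserstein distance of \cite[Lemma 2]{dereich2013constructive} (see also \cite[Lemma 6]{fournier2015rate}): with the partitions $\mathcal{P}_l$ of $(-1,1]^d$ and the annuli $B_n$ from \eqref{eqn:fournier_estimate_partition},
\begin{equation*}
\mathcal{W}(\mu_G, \mu_G^N) \leq C \sum_{n \geq 0} 2^n \sum_{l \geq 0} 2^{-l} \sum_{F \in \mathcal{P}_l} \bigl|\mu_G(2^n F \cap B_n) - \mu_G^N(2^n F \cap B_n)\bigr|,
\end{equation*}
where $C > 0$ depends only on $d$; this is meaningful because both $\mu_G$ and $\mu_G^N$ have finite first moment. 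Multiplying by $\mu^N(G)$, taking expectation, and interchanging expectation with the nonnegative series by Tonelli, I would apply Lemma~\ref{lem:probability_bound_disintegration} to each summand. Writing $A_{n,F} := 2^n F \cap B_n$ and $C_\eta := \sqrt{1 + 2\sum_{s = 1}^{N-1}\eta_{\slices{X}}(s)}$, this gives
\begin{equation*}
\mathbb{E}\bigl[\mu^N(G)\,|\mu_G(A_{n,F}) - \mu_G^N(A_{n,F})|\bigr] \leq C \cdot C_\eta \cdot \mu(G) \cdot \min\!\left(4\,\mu_G(A_{n,F}),\; 2\sqrt{\tfrac{\mu_G(A_{n,F})}{N\mu(G)}}\right).
\end{equation*}

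Finally, the assumption $\int \|x\|^p\,\mu_G(dx) \leq 1$ places us exactly in the hypothesis of Lemma~\ref{lem:fournier_estimate}, which I would apply with $\nu = \mu_G$ and $M = N\mu(G)$; after absorbing the extra factors of $2$ and $4$ into the generic constant $C$, the dyadic series is bounded by $C \cdot R_p(N\mu(G))$, yielding the claimed estimate. The degenerate case $\mu(G) = 0$ is trivial since $R_p(0) = +\infty$ by \eqref{eqn:R} and, moreover, $\mu^N(G) = 0$ almost surely. There is no genuine obstacle at this step: the mixing dependence has already been quarantined inside Lemma~\ref{lem:probability_bound_disintegration}, and the combinatorics of the dyadic scales are packaged inside Lemma~\ref{lem:fournier_estimate}; the only point requiring care is matching the constants in the two $\min$ expressions so that Lemma~\ref{lem:fournier_estimate} applies verbatim up to an absolute multiplicative factor.
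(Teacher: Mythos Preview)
Your proposal is correct and follows essentially the same approach as the paper's own proof: the paper likewise invokes the dyadic bound \cite[Lemma 5 \& Lemma 6]{fournier2015rate}, interchanges expectation with the nonnegative series, applies Lemma~\ref{lem:probability_bound_disintegration} termwise, and concludes via Lemma~\ref{lem:fournier_estimate} with $\nu=\mu_G$ and $M=N\mu(G)$. The only minor difference is that the paper does not explicitly discuss the degenerate case $\mu(G)=0$, which you handle cleanly.
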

\begin{proof}
First, \cite[Lemma 5 \& Lemma 6]{fournier2015rate} yield that for any $\mu, \nu \in \mathcal{P}(\mathbb{R}^d)$ 
\begin{equation}\label{eqn:fournier_lemma}
\mathcal{W}(\mu, \nu) \leq C \cdot \sum_{n \geq 0} 2^n \sum_{l \geq 0} 2^{-l} \sum_{F \in \mathcal{P}_l} |\mu(2^n F \cap B_n) - \nu(2^n F \cap B_n)|,
\end{equation}
where $C > 0$ depends on $d$. The partitions $\mathcal{P}_l, B_n$ are defined in \eqref{eqn:fournier_estimate_partition}. Consequently, denoting $\sqrt{1 + 2 \sum_{s = 1}^{N-1} \eta_{\slices{X}}(s)}$ by $C_\eta$ we obtain
\begin{align}
\begin{split}
&\mathbb{E}\,[\mu^N(G) \mathcal{W}(\mu_G, \mu^N_G)]\\
&\leq C \cdot \mathbb{E}\,\left[\mu^N(G) \sum_{n \geq 0} 2^n \sum_{l \geq 0} 2^{-l} \sum_{F \in \mathcal{P}_l} |\mu_G(2^n F \cap B_n) - \mu^N_G(2^n F \cap B_n)|\right]\\
&= C \cdot \sum_{n \geq 0} 2^n \sum_{l \geq 0} 2^{-l} \sum_{F \in \mathcal{P}_l} \mathbb{E}\left[\mu^N(G) \cdot |\mu_G(2^n F \cap B_n) - \mu^N_G(2^n F \cap B_n)|\right]\\
&\overset{\text{(Lemma \ref{lem:probability_bound_disintegration})}}{\leq} C \cdot C_\eta \cdot \mu(G) \cdot \sum_{n \geq 0} 2^n \sum_{l \geq 0} 2^{-l} \sum_{F \in \mathcal{P}_l} \min\left(4 \mu_G(2^n F \cap B_n), 2\frac{\sqrt{\mu_G(2^n F \cap B_n)}}{\sqrt{N \mu(G)}}\right)\\
&\overset{\text{(Lemma \ref{lem:fournier_estimate})}}{\leq} C \cdot C_\eta \cdot \mu(G) \cdot R_p(N\mu(G)),
\end{split}
\end{align}
where $C > 0$ depends on $d, p$ and we have applied Lemma \ref{lem:fournier_estimate} with $\nu=\mu_G$ and $M=N\mu(G)$ in the last inequality. The proof is complete.
\end{proof}

\begin{lemma}\label{lem:sum_mu_G_W_mu_G_mu_N_G}
Let $\mu \in \mathcal{P}(([0, 1]^d)^T)$, and let $\slices X \sim \mu$. Then
$$
\mathbb{E}\Bigg[\sum_{G \in \Phi^N_t} \mu^N(G) \mathcal{W}(\mu_G, \mu^N_G) \Bigg]\leq C \cdot \sqrt{1 + 2 \sum_{s = 1}^{N-1} \eta_{\slices X}(s)} \cdot \operatorname{rate}_\infty(N),
$$ 
where $C > 0$ depends on $d$.
\end{lemma}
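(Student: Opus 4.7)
The plan is to apply Lemma \ref{lem:wasserstein_distance_bound_disintegration} to each cube $G \in \Phi^N_t$ separately and then sum over $G$. Since $\mu$ is supported on $([0,1]^d)^T$, the conditional measure $\mu_G(dx_{t+1}) = \mu(dx_{t+1} \mid x_{1:t} \in G)$ is supported on $[0,1]^d$, and in particular $(\int \|x\|^p\,\mu_G(dx))^{1/p} \leq \sqrt d$ for every $p \in [1, \infty]$. The natural version of Lemma \ref{lem:wasserstein_distance_bound_disintegration} to invoke is with $p = \infty$; I expect this to require only a routine adaptation of Lemma \ref{lem:fournier_estimate}, since for $\mu_G$ supported in $(-1, 1]^d$ one has $\mu_G(B_n) = 0$ for all $n \geq 1$, so the outer dyadic sum over $n$ in \eqref{eqn:fournier_estimate} collapses to the single term $n = 0$. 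This eliminates the moment-dependent first summand in the definition \eqref{eqn:R} of $R_p$ and produces precisely its second summand, which coincides with $\operatorname{rate}_\infty(N)$ up to a $d$-dependent constant.

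After rescaling by $1/\sqrt d$ (which rescales $\mathcal{W}$ by $1/\sqrt d$), I would obtain, for each $G \in \Phi^N_t$,
\[
\mathbb{E}\bigl[\mu^N(G)\,\mathcal{W}(\mu_G, \mu^N_G)\bigr] \leq C \cdot C_\eta \cdot \mu(G) \cdot R_\infty(N\mu(G)),
\]
where $C_\eta := \sqrt{1 + 2\sum_{s=1}^{N-1}\eta_{\slices X}(s)}$ and $C$ depends only on $d$. Summing over $G$ then reduces the problem to bounding the deterministic sum $\sum_{G \in \Phi^N_t} \mu(G)\, R_\infty(N\mu(G))$.

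For this final step I would use Hölder's inequality together with the cardinality estimate $|\Phi^N_t| \leq \Delta_N^{-dt} = N^{dtr}$ with $r$ as in \eqref{eq:def_r}. In the case $d = 1$, where $R_\infty(u) = u^{-1/2}$ and $r = 1/(T+1)$, Cauchy--Schwarz gives
\[
N^{-1/2}\sum_{G \in \Phi^N_t} \sqrt{\mu(G)} \leq N^{-1/2}\sqrt{|\Phi^N_t|} \leq N^{-(T+1-t)/(2(T+1))} \leq N^{-1/(T+1)}
\]
for $t \leq T - 1$, matching $\operatorname{rate}_\infty(N)$. In the case $d \geq 3$, where $R_\infty(u) = u^{-1/d}$ and $r = 1/(dT)$, Hölder with exponents $(d/(d-1),\, d)$ yields $\sum_G \mu(G)^{1-1/d} \leq |\Phi^N_t|^{1/d} \leq N^{t/T}$, and consequently $\sum_G \mu(G)\,(N\mu(G))^{-1/d} \leq N^{-(T-t)/(dT)} \leq N^{-1/(dT)}$. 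The case $d = 2$ is identical to $d = 1$ except that the extra factor $\log(N\mu(G) + 3) \leq \log(N + 3)$ in $R_\infty$ is pulled outside the sum, producing the $\log(N+1)$ contribution in $\operatorname{rate}_\infty(N)$.

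The only non-trivial step is extending Lemma \ref{lem:wasserstein_distance_bound_disintegration} to $p = \infty$, and I expect this to be a straightforward inspection of the proof of Lemma \ref{lem:fournier_estimate}: compact support in $(-1, 1]^d$ kills all but the $n = 0$ term in the outer sum, which directly yields the $R_\infty$ bound. The summation step is standard and mirrors the corresponding computation in \cite{backhoff2022estimating, fournier2015rate}; our setting differs only through the inherited multiplicative factor $C_\eta$ produced by the $\eta$-mixing hypothesis.
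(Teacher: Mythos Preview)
Your proposal is correct and follows the same overall strategy as the paper: apply Lemma~\ref{lem:wasserstein_distance_bound_disintegration} with $p=\infty$ (after a scaling argument) to each $G$, then sum. The only difference is in how you control the deterministic sum $\sum_{G\in\Phi^N_t}\mu(G)\,R_\infty(N\mu(G))$. The paper observes that $u\mapsto uR_\infty(u)$ is concave and applies Jensen's inequality with respect to the uniform weights $1/|\Phi^N_t|$, obtaining in one stroke
\[
\sum_{G\in\Phi^N_t}\mu(G)\,R_\infty(N\mu(G)) \le R_\infty\!\bigl(N/|\Phi^N_t|\bigr) \le R_\infty\!\bigl(N/|\Phi^N_{T-1}|\bigr)=\operatorname{rate}_\infty(N),
\]
uniformly in $d$. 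Your H\"older/Cauchy--Schwarz route reaches the same bound but forces a case split over $d=1$, $d=2$, $d\ge 3$; the Jensen argument is a bit cleaner and dimension-agnostic. (Minor arithmetic slip in your $d\ge 3$ line: $|\Phi^N_t|^{1/d}\le N^{t/(dT)}$, not $N^{t/T}$; your final exponent $-(T-t)/(dT)$ is nonetheless correct.)
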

\begin{proof}
By Lemma \ref{lem:wasserstein_distance_bound_disintegration} applied with $p = \infty$ and a scaling argument, for any $G \in \Phi^N_t$ we have
\begin{equation}\label{eqn:sum_lemma_1}
\mathbb{E}\,[\mu^N(G) \mathcal{W}(\mu_G, \mu^N_G)] \leq C \cdot \sqrt{1 + 2 \sum_{s = 1}^{N-1} \eta_{\slices{X}}(s)} \cdot \mu(G) \cdot R_\infty(N \mu(G)),
\end{equation}
where $C > 0$ depends on $d$, and $R_\infty$ is defined in \eqref{eqn:R}. Hence, using concavity of $u \mapsto uR_\infty(u)$ we write
\begin{align}\label{eqn:sum_lemma_2}
\begin{split}
&\sum_{G \in \Phi^N_t} \mu(G) \cdot R_\infty(N \mu(G))\\
&= \frac{|\Phi^N_t|}{N} \sum_{g \in \Phi^N_t} \frac{1}{|\Phi^N_t|} \cdot N \mu(G) R_\infty(N \mu(G))\\
&\overset{\text{(Jensen)}}{\leq} \frac{|\Phi^N_t|}{N} \left(\sum_{G \in \Phi^N_t} \frac{N \mu(G)}{|\Phi^N_t|}\right) R_\infty\left(\sum_{G \in \Phi^N_t} \frac{N \mu(G)}{|\Phi^N_t|}\right)\\
&= R_\infty\left(\frac{N}{|\Phi^N_t|}\right) \leq R_\infty\left(\frac{N}{|\Phi^N_{T-1}|}\right) = R_\infty(N \cdot \Delta_N^{d(T-1)}) = \operatorname{rate}_\infty(N),
\end{split}
\end{align}
where we have used that $R_\infty$ is decreasing and $|\Phi^N_{T-1}| = \Delta_N^{-d(T-1)}$, as $\mu$ is supported on $([0, 1]^d)^T$. The claim now follows from \eqref{eqn:sum_lemma_1} and \eqref{eqn:sum_lemma_2}.
\end{proof}

We are now in a position to prove Theorem \ref{thm:moment_estimate_compact} and Theorem \ref{thm:rate_general}:
\begin{proof}[Proof of Theorem \ref{thm:moment_estimate_compact}]
Without loss of generality assume that $\mu$ is supported on $([0, 1]^d)^T$. First, by \cite[Lemma 3.1]{backhoff2022estimating} and the fact that $\mu$ has $L$-Lipschitz kernels we have
\begin{equation}\label{eqn:adapted_dp}
\mathcal{AW}(\mu, \widehat \mu^N) \leq C \cdot \mathcal{W}(\mu^1, (\widehat \mu^{N})^1) + C \cdot \sum_{t = 1}^{T - 1} \int \mathcal{W}(\mu_{y_{1:t}}, \widehat \mu^N_{y_{1:t}}) \, \widehat \mu^N(dy),
\end{equation}
where $C > 0$ depends on $L$, and $(\widehat \mu^{N})^1 := \frac{1}{N} \sum_{n = 1}^N \delta_{\varphi^N(X^n_1)}$ is the empirical measure of the first coordinates of $\slices{X}$. We then proceed with each term of the type $\int \mathcal{W}(\mu_{y_{1:t}}, \widehat \mu^N_{y_{1:t}}) \, \widehat \mu^N(dy)$ separately. Applying \cite[Lemma 3.2]{backhoff2022estimating} we obtain
\begin{equation}\label{eqn:disintegration_bound}
\int \mathcal{W}(\mu_{y_{1:t}}, \widehat \mu^N_{y_{1:t}}) \, \widehat \mu^N(dy) \leq C \cdot \Delta_N + C \cdot \sum_{G \in \Phi^N_t} \mu^N(G) \mathcal{W}(\mu_G, \mu^N_G),
\end{equation}
where 
and $C > 0$ depends on $L$. Hence, by Lemma \ref{lem:sum_mu_G_W_mu_G_mu_N_G} we have
\begin{align*}
\mathbb{E}\Bigg[ \int \mathcal{W}(\mu_{y_{1:t}}, \widehat \mu^N_{y_{1:t}}) \, \widehat\mu^N(dy)\Bigg]
&\overset{\eqref{eqn:disintegration_bound}}{\leq} C \cdot  \Delta_N + C \cdot \mathbb{E}\Bigg[ \sum_{G \in \Phi^N_t} \mu^N(G) \mathcal{W}(\mu_G, \mu^N_G)\Bigg]\\
&\leq C \cdot \Delta_N + C \cdot \sqrt{1 + 2 \sum_{s = 1}^{N-1} \eta_{\slices{X}}(s)} \cdot \operatorname{rate}_{\infty}(N),
\end{align*}
where $C > 0$ depends on $d, L$. Summing up these estimates over $t \in \{1, \ldots, T-1\}$ we conclude
$$
\mathcal{AW}(\mu, \widehat \mu^N) \leq C \cdot \sqrt{1 + 2 \sum_{s = 1}^{N-1} \eta_{\slices{X}}(s)} \cdot \operatorname{rate}_{\infty}(N),
$$
where $C > 0$ depends on $d, L, T$. The proof is complete.
\end{proof}

For general $\mu$ we use a scaling argument together with Assumption \ref{asm:uniform_noise_moment}, similarly to \cite{acciaio2024convergence}. 

\begin{proof}[Proof of Theorem \ref{thm:rate_general}]
As in \cite{{acciaio2024convergence}} we only present the proof for the case $d \geq 3$,  for notational simplicity. The proof for the case for $d\in \{1,2\}$ follows from very similar arguments.

Following the same arguments as in the proof of Theorem \ref{thm:moment_estimate_compact} we need to estimate
$$
\mathbb{E} \Bigg[ \sum_{G \in \Phi^N_t} \mu^N(G) \mathcal{W}(\mu_G, \mu^N_G)\Bigg]
$$
for $t \in \{1, \ldots, T - 1\}$. For this we define the mapping $\Phi: (\R^d)^T \to (\R^d)^T$ as follows:
\begin{align*}
&\Phi(x_{1:T}) := \left(x_{1:t}, \frac{1}{\lambda(x_{1:t})} \frac{x_{t+1} - f(x_{1:t})}{\sigma(x_{1:t})}, x_{t+2:T}\right),\\
&\lambda(x_{1:t}) := \sum_{G \in \Phi^N_t} \lambda_G \cdot \mathds{1}_G(x_{1:t}),\quad \sigma(x_{1:t}) := \sum_{G \in \Phi^N_t} \sigma_G \cdot \mathds{1}_G(x_{1:t}),\quad f(x_{1:t}) := \sum_{G \in \Phi^N_t} f_G \cdot \mathds{1}_G(x_{1:t}),
\end{align*}
where
\begin{align}
\begin{split}
\lambda_G &:= L \cdot \|G\| + \left(\sup_{x_{1:t} \in (\mathbb{R}^d)^t} \mathbb{E} \, \|\varepsilon(x_{1:t})\|^p\right)^\frac{1}{p},\\
\sigma_G &:= \sup_{x_{1:t} \in G} \sigma_t(x_{1:t}) \vee 1,\\
f_G &:= f_t(x_G), \;\; x_G \in \varphi^N(G)
\end{split}
\end{align}
are the constants from \cite[Lemma 3.3]{acciaio2024convergence}.
By Assumption \ref{asm:uniform_noise_moment} we have
\begin{equation}\label{eqn:normalization_properties}
\sigma_G \leq C \cdot (1 + \|G\|^r), \quad \max_{G \in \Phi^N_t} \lambda_G \leq C,
\end{equation}
where $C > 0$ depends on $L, B$. Define now $\bar \mu := (\Phi)_\# \mu, \;\; \bar \mu^N := (\Phi)_\# \mu^N$, and note that
$(\Phi(x_{1:T}))_{1:t} \equiv x_{1:t}$. Thus,
\begin{equation}\label{eqn:normalization_measure_properties}
\bar \mu(dx_{1:t}) = \mu(dx_{1:t}).
\end{equation}
Furthermore, according to \cite[Lemma 3.3]{acciaio2024convergence} we have
\begin{equation}\label{eqn:wasserstein_normalized_p_moment_bound}
\int \|x_{t+1}\|^p \, \bar \mu_G(dx_{t+1}) \leq 1.
\end{equation}
Next, by the scaling property of Wasserstein distance,
\begin{equation}\label{eqn:wasserstein_scaling}
\mathcal{W}(\mu_G, \mu^N_G) = \lambda_G \sigma_G \cdot \mathcal{W}(\bar \mu_G, \bar \mu^N_G) \overset{\eqref{eqn:normalization_properties}}{\leq} C \cdot (1 + \|G\|^r) \cdot \mathcal{W}(\bar \mu_G, \bar \mu^N_G)
\end{equation}
for all $G \in \Phi^N_t$, where $C > 0$ depends on $L, B$. 
To estimate the rhs of \eqref{eqn:wasserstein_scaling}, we use \eqref{eqn:wasserstein_normalized_p_moment_bound} and apply Lemma \ref{lem:wasserstein_distance_bound_disintegration} with $\bar \mu$ and $\Phi(\slices{X}) := (\Phi(X^n))_{n = 1}^N$ to get 
\begin{align}
\begin{split}
\mathbb{E}\,[\bar \mu^N(G) \cdot \mathcal{W}(\bar \mu_G, \bar \mu^N_G)]
&\leq C \cdot \sqrt{1 + 2 \sum_{s = 1}^{N-1} \eta_{\Phi(\slices{X})}(s)} \cdot \bar\mu(G)\cdot  R_p(N \bar\mu(G))\\
&\overset{\substack{\text{(Prop.  \ref{prop:eta_mixing_information_processing})}\\\eqref{eqn:normalization_measure_properties}}}{\leq} C \cdot \sqrt{1 + 2 \sum_{s = 1}^{N-1} \eta_{\slices{X}}(s)} \cdot \mu(G)\cdot R_p(N \mu(G)),
\end{split}
\end{align}
where $C > 0$ depends on $d, p$. Therefore, denoting $\sqrt{1 + 2 \sum_{s = 1}^{N-1} \eta_{\slices{X}}(s)}$ by $C_\eta$, we obtain
\begin{align}\label{eqn:moment_estimate_general_prefinal}
&\mathbb{E}\,\sum_{G \in \Phi^N_t} \mu^N(G) \mathcal{W}(\mu_G, \mu^{N}_G)\nonumber\\
&\overset{\eqref{eqn:normalization_properties}, \eqref{eqn:normalization_measure_properties}, \eqref{eqn:wasserstein_scaling}}{\leq} C \cdot C_\eta \cdot \sum_{G \in \Phi^N_t} \mu(G) \cdot (1 + \|G\|^r) \cdot R_p(N \mu(G))\nonumber\\
&= C \cdot C_\eta \cdot \left(N^{-\frac{1}{d}} \sum_{G \in \Phi^N_t} (1 + \|G\|^r) \cdot \mu(G)^{1 - \frac{1}{d}} + N^{-\frac{p-1}{p}} \sum_{G \in \Phi^N_t} (1 + \|G\|^r) \cdot \mu(G)^{1 - \frac{p-1}{p}}\right),
\end{align}
where $C > 0$ depends on $d, L, B$. Having obtained the same equation as in \cite[Equation (37), proof of Theorem 2.16]{acciaio2024convergence} up to a multiplicative constant and a term $C_\eta = \sqrt{1 + 2 \sum_{s = 1}^{N-1} \eta_{\slices{X}}(s)}$, we follow the proof of \cite[Theorem 2.16, Steps 2-3]{acciaio2024convergence} line by line. More precisely, we estimate \eqref{eqn:moment_estimate_general_prefinal} using \cite[Equation (39), proof of Theorem 2.16]{acciaio2024convergence} and obtain \cite[Equation (40), proof of Theorem 2.16]{acciaio2024convergence}, which leads to a final estimate for $\mathbb{E} \, \mathcal{AW}(\mu, \widehat \mu^N)$.
\end{proof}

\section{Bounded Differences inequality for mixing sequences and proofs for Section \ref{sec:main_concentration} }\label{sec:concentration}
Before proceeding with the proofs of Theorem \ref{thm:AW_concentration_compact} and \ref{thm:AW_concentration_general}, we prove a version of Bounded Differences inequality for mixing sequences.

\subsection{Bounded Differences inequality for mixing sequences}
We start with the following extension of the Bounded Differences inequality:
\begin{lemma}\label{lem:bounded_differences_mixing}
Let $\bold Z = (Z_n)_{n = 1}^N$ be a collection of random elements taking values in $\R^k$, $k \geq 1$. Let $\phi: (\R^k)^N \to \mathbb{R}$ be $L$-Lipschitz with respect to the Hamming distance\footnote{In other words, $\phi$ has $L$-bounded differences.}, i.e.
$$
|\phi(x)-\phi(y)| \leq L \cdot \sum_{n = 1}^N \mathds{1}_{\{x_n \neq y_n\}} \quad \text{for all }x, y \in (\R^k)^N.
$$
Then we have the concentration inequality 
\begin{equation}\label{eqn:bounded_differences_mixing}
\mathbb{P}\left(\left|\phi(Z_{1:N}) - \mathbb{E}\, \phi(Z_{1:N})\right| > \varepsilon\right) \leq 2 \exp\left(-\frac{\varepsilon^2}{2N L^2 (1 + \sum_{s = 1}^{N-1} \bar \eta_{\bold Z}(s))^2}\right).
\end{equation}
\end{lemma}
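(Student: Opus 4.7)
The plan is to apply the Doob martingale decomposition together with the bounded-range form of Azuma--Hoeffding's inequality. Setting $V_n(z_{1:n}) := \E[\phi(Z_{1:N})\mid Z_{1:n}=z_{1:n}]$ and $D_n := V_n(Z_{1:n}) - V_{n-1}(Z_{1:n-1})$, one has $\phi(Z_{1:N}) - \E\phi(Z_{1:N}) = \sum_{n=1}^N D_n$ and $\E[D_n \mid Z_{1:n-1}] = 0$. Since $D_n = V_n(Z_{1:n}) - \int V_n(Z_{1:n-1},w)\,\mathbb{P}(Z_n \in dw \mid Z_{1:n-1})$, the conditional range of $D_n$ given $Z_{1:n-1}$ is at most the oscillation of $z_n \mapsto V_n(Z_{1:n-1},z_n)$ over the support of $\operatorname{Law}(Z_n \mid Z_{1:n-1})$.

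The core step is to establish the almost-sure bound $\operatorname{osc}_{z_n} V_n(Z_{1:n-1},z_n) \leq 2L\bigl(1+\sum_{s=1}^{N-1} \bar\eta_{\bold Z}(s)\bigr)$. Fix $z_{1:n-1}$ and two admissible values $z_n, \tilde z_n$, and represent the difference $V_n(z_{1:n-1},z_n) - V_n(z_{1:n-1},\tilde z_n)$ as $\E_\pi[\phi(Z) - \phi(Z')]$ under a joint distribution $\pi$ that forces $Z_{1:n-1} = Z'_{1:n-1} = z_{1:n-1}$, $Z_n = z_n$, $Z'_n = \tilde z_n$, and couples the conditional tail laws $\operatorname{Law}(Z_{n+1:N}\mid Z_{1:n}=z_{1:n-1}z_n)$ and $\operatorname{Law}(Z_{n+1:N}\mid Z_{1:n}=z_{1:n-1}\tilde z_n)$. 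By the Hamming-Lipschitz property of $\phi$, this difference is bounded by $L\bigl(1 + \sum_{s=1}^{N-n}\pi(Z_{n+s}\neq Z'_{n+s})\bigr)$. A triangle inequality through $\operatorname{Law}(Z_{n+s:N}\mid Z_{1:n-1}=z_{1:n-1})$, combined with Definition \ref{def:eta_mixing}, gives $\operatorname{TV}\bigl(\operatorname{Law}(Z_{n+s:N}\mid Z_{1:n}=z_{1:n-1}z_n),\operatorname{Law}(Z_{n+s:N}\mid Z_{1:n}=z_{1:n-1}\tilde z_n)\bigr) \leq 2\bar\eta_{\bold Z}(s)$. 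I would then construct a nested ``tail-restart'' coupling that realizes the maximal coupling of the tails $Z_{n+s:N}$ at each cutoff level $s$, yielding $\pi(Z_{n+s} \neq Z'_{n+s}) \leq 2\bar\eta_{\bold Z}(s)$ coordinate-wise and hence $\E_\pi d_H(Z_{n+1:N},Z'_{n+1:N}) \leq 2\sum_{s=1}^{N-n}\bar\eta_{\bold Z}(s)$.

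Summing over $s$ and applying Azuma--Hoeffding with common range $c_n := 2L(1+\sum_{s=1}^{N-1}\bar\eta_{\bold Z}(s))$ uniformly in $n$ then yields
\begin{equation*}
\mathbb{P}\bigl(|\phi(Z_{1:N}) - \E\phi(Z_{1:N})| > \varepsilon\bigr) \leq 2\exp\!\Bigl(-\tfrac{2\varepsilon^2}{\sum_{n=1}^N c_n^2}\Bigr) = 2\exp\!\Bigl(-\tfrac{\varepsilon^2}{2NL^2(1+\sum_{s=1}^{N-1}\bar\eta_{\bold Z}(s))^2}\Bigr),
\end{equation*}
which is the claimed inequality. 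The main technical obstacle is the construction of the coupling $\pi$ realizing the simultaneous coordinate bounds $\pi(Z_{n+s} \neq Z'_{n+s}) \leq 2\bar\eta_{\bold Z}(s)$; marginal TV bounds alone do not generically yield a joint coupling with such coordinatewise agreement probabilities. It is precisely the tail nature of $\bar\eta_{\bold Z}$, which controls the whole block $Z_{n+s:N}$ rather than the single coordinate $Z_{n+s}$, that makes the layered construction feasible: disagreement at time $n+s$ can be arranged to occur only when the maximal coupling of the tails $Z_{n+s:N}$ fails, an event whose probability the definition of $\bar\eta_{\bold Z}$ directly bounds.
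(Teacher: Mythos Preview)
Your overall architecture matches the paper's: Doob martingale decomposition, a uniform bound on the martingale differences via the tail coefficients $\bar\eta_{\bold Z}(s)$, and then Azuma--Hoeffding. The point of departure is how you obtain the bound on the differences.

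You propose to control $V_n(z_{1:n-1},z_n) - V_n(z_{1:n-1},\tilde z_n)$ by building a \emph{single} coupling $\pi$ of the two conditional tail laws satisfying $\pi(Z_{n+s}\neq Z'_{n+s}) \le 2\bar\eta_{\bold Z}(s)$ simultaneously for every $s$. This is indeed possible---it is precisely Goldstein's maximal coupling theorem (Goldstein 1979, \emph{Z.\ Wahrsch.}), which for any two laws on a product space produces a coupling whose tail-mismatch probabilities equal the tail total variations at every level---but it is a nontrivial external result that you would need to cite or reproduce. The paper sidesteps this entirely: instead of a coupling, it proves a short analytic ``subadditivity'' lemma (Lemma~\ref{lem:TV_subadditivity}) stating that for any $1$-Hamming-Lipschitz $\phi$ and any $\mu,\nu\in\mathcal P((\R^k)^M)$,
\[
\int \phi\,d\mu - \int \phi\,d\nu \;\le\; \sum_{m=1}^M \operatorname{TV}\bigl(\mu(dx_{m:M}),\nu(dx_{m:M})\bigr),
\]
proved by a two-line induction (split $\phi = \psi(x_{2:M}) + g(x_{1:M})$ with $\psi = \inf_{x_1}\phi$ and $0\le g\le 1$). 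This gives the same numerical bound without constructing anything. Moreover, the paper compares $\operatorname{Law}(Z_{n+s:N}\mid Z_{1:n}=z_{1:n})$ directly with $\operatorname{Law}(Z_{n+s:N}\mid Z_{1:n-1}=z_{1:n-1})$, avoiding your triangle inequality and its factor~$2$; since the paper then applies the one-sided Azuma bound $|V_n|\le b$ rather than the range bound, the final constants coincide with yours.

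One genuine loose end in your write-up: you invoke ``Definition~\ref{def:eta_mixing}'' to get $\operatorname{TV}(\operatorname{Law}(Z_{n+s:N}\mid Z_{1:n}=z_{1:n}),\operatorname{Law}(Z_{n+s:N}\mid Z_{1:n-1}=z_{1:n-1}))\le \bar\eta_{\bold Z}(s)$ at \emph{points} $z_{1:n}$, but $\bar\eta_{\bold Z}$ is defined via conditioning on \emph{sets} of positive probability. Passing from sets to a.e.\ points requires a Lebesgue differentiation argument, which the paper carries out separately as Proposition~\ref{prop:TV_bound_mixing}. Without it, the pointwise inequality you need is not justified.
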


\begin{remark}
The Bounded Differences (McDiarmid's) inequality for the i.i.d.~observations $\bold Z = (Z_n)_{n = 1}^N$ and a function $\phi: (\R^k)^N \to \R$ with $L$-bounded differences states that
$$
\mathbb{P}\left(\left|\phi(Z_{1:N}) - \mathbb{E}\, \phi(Z_{1:N})\right| > \varepsilon\right) \leq 2 \exp\left(-\frac{2\varepsilon^2}{N L^2}\right),
$$
see \cite[Lemma 1.2]{mcdiarmid1989method}. This coincides with our result up to a multiplicative constant when $\bold Z$ are independent, or, more generally, $\sum_{s = 1}^\infty \bar \eta_{\bold Z}(s) < \infty$. More generally, for $\eta$-mixing observations taking values in a countable space $\mathcal{Z}$, Kontorovich and Ramanan proved that (see \cite[Theorem 1.1]{kontorovich2008concentration})
\begin{equation}\label{eqn:kontorovich_ramanan_concentration}
\mathbb{P}\left(\left|\phi(Z_{1:N}) - \mathbb{E}\, \phi(Z_{1:N})\right| > \varepsilon\right) \leq 2 \exp\left(-\frac{\varepsilon^2}{2N L^2 \max_{1 \leq n \leq N} (1 + \sum_{s = 1}^{N - n} \widehat \eta_{\bold Z}(n, n + s))^2}\right),
\end{equation}
where $\widehat \eta_{\bold Z}(n, n + s)$ was defined in \eqref{eqn:kontorovich_ramanan_eta_mixing} and we use the convention $\sum_{s=1}^0 \eta_{\bold Z}(n, n + s):=0$. Both results are proved using a martingale differences argument in the form of Azuma's inequality; \cite{kontorovich2008concentration} takes an algebraic approach to bound the differences (which cannot be directly extended to uncountable spaces), while we use the simple tensorization property of $\operatorname{TV}$ in the form of Lemma \ref{lem:TV_subadditivity}.

Let us now compare \eqref{eqn:kontorovich_ramanan_concentration} and Lemma \ref{lem:bounded_differences_mixing}. We estimate
\begin{align}\label{eqn:kontorovich_ramanan_eta_stationary}
\begin{split}
\widehat \eta_{\bold Z}(n, n+s) &\overset{\eqref{eqn:kontorovich_ramanan_eta_mixing}}{=} \sup_{\substack{z_{1:n} \in \mathcal{Z}^n, \tilde z_n \in \mathcal{Z}\\\mathbb{P}(Z_{1:n} = z_{1:n}) > 0\\\mathbb{P}(Z_{1:n} = z_{1:n-1} \tilde z_n) > 0}} \operatorname{TV}\left(\operatorname{Law}(Z_{n+s} \mid Z_{1:n} = z_{1:n}), \operatorname{Law}(Z_{n+s} \mid Z_{1:n} = z_{1:n-1} \tilde z_n)\right)\\
&\leq 2 \sup_{\substack{z_{1:n} \in \mathbb{Z}^n\\
\mathbb{P}(Z_{1:n} = z_{1:n}) > 0}} \operatorname{TV}\left(\operatorname{Law}(Z_{n+s} \mid Z_{1:n} = z_{1:n}), \operatorname{Law}(Z_{n+s} \mid Z_{1:n-1} = z_{1:n-1})\right)\\
&\leq 2 \bar \eta_{\bold Z}(s),
\end{split}
\end{align}
where the first inequality follows from triangle inequality for $\operatorname{TV}$. Thus, for countable spaces,
\begin{equation*}
\mathbb{P}\left(\left|\phi(Z_{1:N}) - \mathbb{E}\, \phi(Z_{1:N})\right| > \varepsilon\right) \overset{\eqref{eqn:kontorovich_ramanan_concentration}, \eqref{eqn:kontorovich_ramanan_eta_stationary}}{\leq} 2 \exp\left(-\frac{\varepsilon^2}{2NL^2 (1 + 2 \sum_{s = 1}^{N-1} \bar \eta_{\bold Z}(s))^2}\right),
\end{equation*}
which coincides with our result up to a multiplicative constant.
\end{remark}

\begin{proof}[Proof of Lemma \ref{lem:bounded_differences_mixing}]
We proceed with a martingale differences argument: for $n \in \{1, \ldots, N\}$ define
\begin{equation}\label{eqn:bdd_def}
V_n(z_{1:n}) := \mathbb{E}\,[\phi(Z_{1:N}) \mid Z_{1:n} = z_{1:n}] - \mathbb{E}\,[\phi(Z_{1:N}) \mid Z_{1:n-1} = z_{1:n-1}], \qquad V_0 := 0.
\end{equation}
Fix $n \in \{1, \ldots, N\}$. By the $L$-Lipschitz property of $\phi$ we have
\begin{equation}\label{eqn:bdd_L}
|\phi(z_{1:n-1} \tilde z_{n:N}) - \phi(z_{1:n} \tilde z_{n+1:N})| \leq L
\end{equation}
for all $z_{1:n} \in (\R^k)^n$ and $\tilde z_{n:N} \in (\R^k)^{N-n+1}$, and consequently 
\begin{align}\label{eqn:bounded_differences_subadditivity} 
\begin{split}
|V_n(z_{1:n})| &\overset{\eqref{eqn:bdd_def}}{=} \left|\mathbb{E}\,[\phi(z_{1:n} Z_{n+1:N}) \mid Z_{1:n} = z_{1:n}] - \mathbb{E}\,[\phi(z_{1:n-1} Z_{n:N}) \mid Z_{1:n-1} = z_{1:n-1}]\right|\\
&\overset{\eqref{eqn:bdd_L}}{\leq} L \cdot \left(1 + \left|\mathbb{E}\,[\phi(z_{1:n} Z_{n+1:N}) \mid Z_{1:n} = z_{1:n}] - \mathbb{E}\,[\phi(z_{1:n} Z_{n+1:N}) \mid Z_{1:n-1} = z_{1:n-1}]\right|\right)\\
&\leq L \cdot \Big(1 + \sum_{s = 1}^{N - n} \operatorname{TV}(\operatorname{Law}(Z_{n+s:N} \mid Z_{1:n} = z_{1:n}), \operatorname{Law}(Z_{n+s:N} \mid Z_{1:n-1} = z_{1:n-1}))\Big)
\end{split}
\end{align}
for any $z_{1:n} \in (\R^k)^n$, where the second inequality follows from Lemma \ref{lem:TV_subadditivity} applied with $M = N - n$, $\phi(\cdot) = \phi(z_{1:n}, \cdot)$, $\mu = \operatorname{Law}(Z_{n+1:N} \mid Z_{1:n} = z_{1:n})$ and $\nu = \operatorname{Law}(Z_{n+1:N} \mid Z_{1:n-1} = z_{1:n-1})$. Therefore, applying Proposition \ref{prop:TV_bound_mixing} we conclude
$$
|V_n(z_{1:n})| \overset{\eqref{eqn:bounded_differences_subadditivity}}{\leq} L \cdot \Big(1 + \sum_{s = 1}^{N - n} \bar \eta_{\bold Z}(s)\Big)
$$
$\mathbb{P}(Z_{1:n} \in dz_{1:n})$-almost surely. The proof now follows from \cite[Corollary 2.20]{wainwright2019high} applied with martingale difference sequence $(V_n, \sigma(Z_{1:n}))_{n = 1}^N$.
\end{proof}

\begin{example}[Wasserstein distance concentration]
Let $\mu \in \mathcal{P}(\R^k)$ be supported on the unit ball, and let $\bold Z = (Z_n)_{n = 1}^N \sim \mu$. Applying Lemma \ref{lem:bounded_differences_mixing} with $\phi(z_{1:N}) := \mathcal{W}\left(\mu, \frac{1}{N} \sum_{n = 1}^N \delta_{z_n}\right)$, for which $L = \frac{2}{N}$, we obtain
\begin{align}\label{eq:conc_wasserstein}
\mathbb{P}\left(\left|\mathcal{W}(\mu, \mu^N) - \mathbb{E}\, \mathcal{W}(\mu, \mu^N)\right| > \varepsilon\right) \leq 2 \exp\left(-N \cdot \frac{\varepsilon^2}{8 \cdot (1 + \sum_{s = 1}^{N-1} \bar \eta_{\bold Z}(s))^2}\right),
\end{align}
where $\mu^N := \frac{1}{N} \sum_{n = 1}^N \delta_{Z_n}$, $\bar \eta_{\bold Z}$ is the tail $\eta$-mixing coefficient of $\bold Z$ and $c > 0$ is an absolute constant. If $\bold Z$ are independent, then we recover the subgaussian concentration rate for the Wasserstein distance on a compact space established, for instance, in \cite{fournier2015rate}. 
From this we also conclude that to obtain exponential concentration of $\mathcal{W}(\mu, \mu^N)$ from \eqref{eq:conc_wasserstein}, it is necessary to have $\bar \eta_{\bold Z}(s) \to 0$, as $s \to \infty$; otherwise the rhs of \eqref{eq:conc_wasserstein} goes to 1 when $N \to \infty$.
\end{example}

\begin{remark}
As the previous example shows, to get exponential concentration via Lemma \ref{lem:bounded_differences_mixing}, it is necessary that $\bar \eta_{\bold Z}(s) \to 0, \;\; s \to \infty$. The concentration rate in Lemma \ref{lem:bounded_differences_mixing} depends on the speed of this convergence. For instance, if $\bar \eta_{\bold Z}(s) \leq C \cdot s^{-\beta}$ for some $C, \beta > 0$ and $L = \frac{1}{N}$, then
$$
\mathbb{P}\left(|\phi(Z_{1:N}) - \mathbb{E}\, \phi(Z_{1:N})| > \varepsilon\right) \leq \begin{cases}
2 \exp \left(-c \cdot N^{2 \beta - 1} \cdot \varepsilon^2\right), &\beta \in (0, 1),\\
2 \exp \left(-c \cdot \tfrac{N}{(\log N)^2} \cdot \varepsilon^2\right), &\beta = 1,\\
2 \exp \left(-c \cdot N \cdot \varepsilon^2\right), &\beta > 1
\end{cases}
$$
for some constant $c > 0$ that depends on $\beta$; the faster the decay, the closer the rate to the optimal subgaussian rate, and it is achieved when $\beta > 1$.
\end{remark}

We emphasize that the concentration inequality obtained is quite general and can be used for other cases except for $\mathcal{AW}$. For instance, on a compact space one could estimate the concentration rate of the empirical average or obtain similar results for the Wasserstein distance as the Lipschitz mapping of the observations.

\subsection{Concentration of $\mathcal{AW}(\mu, \widehat \mu^N)$: compact case}
The concentration result for the compact case now follows from Lemma \ref{lem:bounded_differences_mixing}:
\begin{proof}[Proof of Theorem \ref{thm:AW_concentration_compact}]
Let $\widehat \mu^N(x) := \frac{1}{N} \sum_{n = 1}^N \delta_{\varphi^N(x^n)}$ be the adapted empirical measure of $x = (x^1, \ldots, x^N)$ with $x^n \in (\R^d)^T$, and define
\begin{align*}
\phi(x) := \mathcal{AW}\left(\mu, \widehat \mu^N(x)\right).
\end{align*}
We aim to check that $\phi$ satisfies bounded differences property. Take any $x = x^{1:N}$ and $y = y^{1:N}$ with $x^n, y^n \in (\R^d)^T$. Then by the triangle inequality for $\mathcal{AW}$,
\begin{align*}
|\phi(x) - \phi(y)|
&\leq \mathcal{AW}(\widehat \mu^N(x), \widehat \mu^N(y))\\
&\leq \operatorname{diam}(\operatorname{spt}(\mu)) \cdot T \cdot (2T - 1) \cdot \operatorname{TV}(\widehat \mu^N(x), \widehat \mu^N(y))\\
&\leq \operatorname{diam}(\operatorname{spt}(\mu)) \cdot T \cdot (2T - 1) \cdot \frac{1}{N} \sum_{n = 1}^N \mathds{1}_{\{x^n \neq y^n\}},
\end{align*}
where the second inequality follows from \cite[Corollary 2.7]{acciaio2025estimating}. Thus, $\phi$ has $L$-bounded differences with $L \leq \operatorname{diam}(\operatorname{spt}(\mu)) \cdot \frac{T(2T-1)}{N}$. The claim now follows from Lemma \ref{lem:bounded_differences_mixing}.
\end{proof}

\subsection{Concentration of $\mathcal{AW}(\mu, \widehat \mu^N)$: generalization}

We now proceed with the proof of Theorem \ref{thm:AW_concentration_general}.

\begin{proof}[Proof of Theorem \ref{thm:AW_concentration_general}]
For this proof we use a compact-approximation argument, which will also be reused in the consistency results below. For this reason, we start with a preparation step and then outline the proof strategy. \\
According to Lemma \ref{lem:compact_approximation}, for any $R \ge \sqrt{dT}\,\Delta_N$, where $\Delta_N$ is a grid size, there exists a mapping $\kappa_R: \mathbb{R}^d \to B_{2R}(0) \subset \R^d$, such that
\begin{enumerate}
    \item For any probability measure $\nu \in \mathcal{P}((\mathbb{R}^d)^T)$ we have
    $$
    \mathcal{AW}(\nu, \nu_R) \leq \sqrt{T} \int_{\{\|x\| \geq R\}} \|x\| \, \nu(dx),
    $$
    where $\nu_R : = (\kappa_R)_\# \nu$.
    \item On $B_{\frac{R}{2}}(0)$ we have $\kappa_R \circ \varphi^N = \varphi^N \circ \kappa_R$. Consequently, for $R$ large the truncated empirical measures $\widehat{\mu_R}^N$ and $(\widehat\mu^N)_R$ agree with high probability, allowing us to invoke the compact-case result.
\end{enumerate}

Fix $R \geq \max(N^\delta, \sqrt{dT}\Delta_N)$ for some $\delta > 0$, recall the mapping $\kappa_R$ introduced above, and proceed with the proof. The value of $\delta$ will be chosen later. By the triangle inequality for $\mathcal{AW}$, 
\begin{align}
\begin{split}\label{eq:explain_tau}
|\mathcal{AW}(\mu, \widehat \mu^N) - \mathcal{AW}(\mu_R, \widehat{\mu_R}^N)| 
&= |\mathcal{AW}(\mu, \widehat \mu^N) - \mathcal{AW}( \widehat \mu^N,\mu_R) + \mathcal{AW}( \widehat \mu^N,\mu_R) - \mathcal{AW}(\mu_R, (\widehat \mu^N)_R)\\
&\quad+ \mathcal{AW}(\mu_R, (\widehat \mu^N)_R) - \mathcal{AW}(\mu_R, \widehat{\mu_R}^N)| \\
&\leq \mathcal{AW}(\mu, \mu_R) + \mathcal{AW}(\widehat \mu^N, (\widehat \mu^N)_R) + \mathcal{AW}((\widehat \mu^N)_R, \widehat {\mu_R}^N) =: \tau^N_R.
\end{split}
\end{align}
Thus,
\begin{align}\label{eqn:concentration_general_decomposition}
\begin{split}
& |\mathcal{AW}(\mu, \widehat \mu^N) - \mathbb{E}\,\mathcal{AW}(\mu, \widehat \mu^N)| \leq |\mathcal{AW}(\mu_R, \widehat{\mu_R}^N) - \mathbb{E}\,\mathcal{AW}(\mu_R, \widehat{\mu_R}^N)| + \tau^N_R + \mathbb{E}\, \tau^N_R.
\end{split}
\end{align}
The strategy of the proof is to estimate the deviation probability as follows:
\begin{align}\label{eqn:concentration_general_plan}
\begin{split}
&\mathbb{P}\left(\left|\mathcal{AW}(\mu, \widehat \mu^N) - \mathbb{E}\,\mathcal{AW}(\mu, \widehat \mu^N)\right| > 2 \mathbb{E}\,\tau^N_R + \varepsilon\right)\\
&\overset{\eqref{eqn:concentration_general_decomposition}}{\leq} \mathbb{P}\left(\left|\mathcal{AW}(\mu_R, \widehat{\mu_R}^N) - \mathbb{E}\,\mathcal{AW}(\mu_R, \widehat{\mu_R}^N)\right| + \tau^N_R > \mathbb{E}\, \tau^N_R + \varepsilon\right)\\
&\leq \mathbb{P}\left(\left|\mathcal{AW}(\mu_R, \widehat{\mu_R}^N) - \mathbb{E}\,\mathcal{AW}(\mu_R, \widehat{\mu_R}^N)\right| > \varepsilon\right) + \mathbb{P}\left(\tau^N_R > \mathbb{E}\, \tau^N_R\right),
\end{split}
\end{align}
where the second inequality follows from union bound. We proceed by deriving the deviation probabilities for $\mathcal{AW}(\mu_R, \widehat{\mu_R}^N)$ and $\tau^N_R$ separately, along with a bound for $\mathbb{E}\, \tau^N_R$.

\begin{itemize}
    \item We start with $\mathcal{AW}(\mu_R, \widehat{\mu_R}^N)$. Applying Theorem \ref{thm:AW_concentration_compact} with $\mu = \mu_R$ and observations $\kappa_R(\slices X) := (\kappa_R(X^n))_{n = 1}^N$, we obtain
    \begin{align}\label{eqn:concentration_general_compact}
    \begin{split}
    \mathbb{P}\left(|\mathcal{AW}(\mu_R, \widehat{\mu_R}^N) - \mathbb{E}\,\mathcal{AW}(\mu_R, \widehat{\mu_R}^N)| > \varepsilon\right) &\leq 2 \exp\left(-c \cdot \frac{N}{R^2} \cdot \frac{\varepsilon^2}{(1 + \sum_{s = 1}^{N-1} \bar \eta_{\kappa_R(\slices X)}(s))^2}\right) \\
    &\leq 2 \exp\left(-c \cdot \frac{N}{R^2} \cdot \frac{\varepsilon^2}{(1 + \sum_{s = 1}^{N-1} \bar \eta_{\slices X}(s))^2}\right),
    \end{split}
    \end{align}
    where $c > 0$ depends on $T$; the second inequality follows from Proposition \ref{prop:eta_mixing_information_processing} applied with $f = \kappa_R$.
    \item To estimate $\tau^N_R$, recall that $\mathcal{E}_{\alpha, \gamma}(\mu) < \infty$ for some $\alpha ,\gamma > 0$, which in particular implies that $\mu \in \mathcal{P}_{1 + 1/\delta}((\R^d)^T)$. Thus, by Lemma \ref{lem:tau_estimate} applied with $p = 1 + \frac{1}{\delta}$,
    \begin{align}
    &\mathbb{E}\, \tau^N_R \leq \frac{C}{R^{1/\delta}} \leq \frac{C}{N} \leq C \cdot \Delta_N \leq C \cdot \varepsilon,\label{eqn:AW_conc_tau_properties_1}\\
    &\mathbb{P}\!\left(\tau^N_R > \mathbb{E}\, \tau^N_R\right) \leq \mathcal{E}_{\alpha, \gamma}(\mu) \cdot N \exp\!\left(-c \cdot R^\alpha\right),\label{eqn:AW_conc_tau_properties_2}
    \end{align}
    where $C > 0$ depends on $d, T, \alpha, \gamma, \mathcal{E}_{\alpha, \gamma}(\mu), \delta$ and $c > 0$ depends on $\alpha, \gamma$; we have also used that $R \geq N^\delta$ and $\frac{1}{N} \leq \Delta_N \leq \varepsilon$ for \eqref{eqn:AW_conc_tau_properties_1}. 
\end{itemize}

We now choose $R := \sqrt{dT} N^{\frac{1}{\alpha + 2}} \varepsilon^{\frac{2}{\alpha + 2}}$ and consequently set $\delta := \frac{1}{3(\alpha+2)}$. We first check that $R \geq N^\delta$: 
$$
\varepsilon \geq \Delta_N \geq N^{-\frac{1}{3}} \implies \tfrac{R}{\sqrt{dT}} \geq N^{\frac{1}{\alpha + 2}} \varepsilon^{\frac{2}{\alpha + 2}} \geq N^\frac{1}{\alpha+2} \Delta_N^\frac{2}{\alpha+2} \geq N^\frac{1}{3(\alpha+2)} = N^\delta.
$$
As a consequence, since $\Delta_N \leq 1$ we also have $R \geq \sqrt{dT}\Delta_N$. Thus, we write
\begin{align}
\begin{split}
&\mathbb{P}\left(\left|\mathcal{AW}(\mu, \widehat \mu^N) - \mathbb{E}\, \mathcal{AW}(\mu, \widehat \mu^N)\right| > C \cdot \varepsilon \right)\\
&\overset{\eqref{eqn:AW_conc_tau_properties_1}}{\leq} \mathbb{P}\left(\left|\mathcal{AW}(\mu, \widehat \mu^N) - \mathbb{E}\, \mathcal{AW}(\mu, \widehat \mu^N)\right| > 2\mathbb{E}\, \tau^N_R + \varepsilon \right)\\
&\overset{\eqref{eqn:concentration_general_plan}}{\leq}  \mathbb{P}\left(\left|\mathcal{AW}(\mu_R, \widehat{\mu_R}^N) - \mathbb{E}\,\mathcal{AW}(\mu_R, \widehat{\mu_R}^N)\right| > \varepsilon\right) + \mathbb{P}\left(\tau^N_R > \mathbb{E}\,\tau^N_R\right)\\
&\overset{\substack{\eqref{eqn:concentration_general_compact}\\ \eqref{eqn:AW_conc_tau_properties_2}}}{\leq} 2 \exp\left(-c \cdot \frac{N}{R^2} \cdot \frac{\varepsilon^2}{(1 + \sum_{s = 1}^{N-1} \bar \eta_{\slices X}(s))^2}\right) + \mathcal{E}_{\alpha, \gamma}(\mu) \cdot N\exp(-c \cdot R^\alpha)\\
&\leq 2 \exp\left(-\frac{c}{(1 + \sum_{s = 1}^{N-1} \bar \eta_{\slices X}(s))^2} N^\frac{\alpha}{\alpha+2} \varepsilon^\frac{2\alpha}{\alpha+2}\right) + \mathcal{E}_{\alpha, \gamma}(\mu) \cdot N\exp(-c \cdot N^\frac{\alpha}{\alpha+2} \varepsilon^\frac{2\alpha}{\alpha+2}).
\end{split}
\end{align}
where $C, c > 0$ depend on $\alpha, \gamma, \mathcal{E}_{\alpha, \gamma}(\mu), d, T$. The proof is complete.
\end{proof}

\section{Consistency}\label{sec:consistency}

We first focus on the case where $\mu$ has compact support. A standard argument to establish consistency of $\widehat\mu^N$ proceeds as follows: derive a deviation-from-zero inequality for $\mathcal{AW}(\mu, \widehat \mu^N)$, which is summable with respect to $N$, and then apply the Borel-Cantelli Lemma; phrased mathematically, for $\varepsilon > 0$,
\begin{align*}
\mathbb{P}\left(\mathcal{AW}(\mu, \widehat \mu^N) > \varepsilon\right) \leq f(N, \varepsilon) \text{  and } \sum_{N = 1}^\infty f(N, \varepsilon) < \infty \implies \limsup_{N \to +\infty} \mathcal{AW}(\mu, \widehat \mu^N) \leq \varepsilon \end{align*}
$\mathbb{P}$-almost surely. Such a recipe can be implemented if $\mathbb{P}\left(|\mathcal{AW}(\mu, \widehat \mu^N) - \mathbb{E}\, \mathcal{AW}(\mu, \widehat \mu^N)| > \varepsilon\right) \leq f(N, \varepsilon)$ and $\mathbb{E}\, \mathcal{AW}(\mu, \widehat \mu^N) \to 0$ as $N \to +\infty$. The former inequality is stated in Theorem \ref{thm:AW_concentration_compact} assuming appropriate decay of $s \mapsto \eta_{\bold Z}(s)$ and integrability of $\mu$. 

\subsection{Continuous kernels}
To prove consistency in the case when $x_{1:t} \mapsto \mu_{x_{1:t}}$ admits a continuous version, we take similar route as \cite[Lemma 5.3]{backhoff2022estimating}.

\begin{lemma}\label{lem:compact_continuous_kernels_expectation_convergence}
Let $\mu \in \mathcal{P}((\R^d)^T)$ be compactly supported with continuous kernels, and let $\slices X \sim \mu$. Assume that $\sum_{s = 1}^\infty \bar \eta_{\slices X}(s) < \infty$. Then
$$
\mathbb{E}\, \mathcal{AW}(\mu, \widehat \mu^N) \to 0, \;\; N \to +\infty.
$$
\end{lemma}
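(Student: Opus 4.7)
The plan is to reduce to the Lipschitz-kernel case, where Theorem \ref{thm:moment_estimate_compact} directly yields the conclusion (note that $\sum_s \eta_{\slices X}(s) \leq \sum_s \bar\eta_{\slices X}(s) < \infty$ and $\operatorname{rate}_\infty(N) \to 0$). Since $\mu$ is compactly supported with continuous kernels, each map $x_{1:t} \mapsto \mu_{x_{1:t}}$ is uniformly $\mathcal{W}$-continuous on $\operatorname{spt}(\mu)$. Mollifying these disintegrations in the conditioning variable with a smooth bump of bandwidth $\delta_k \downarrow 0$ (and, if needed, slightly enlarging the support), I would produce a sequence $\mu^{(k)} \in \mathcal{P}((\R^d)^T)$ of compactly supported measures with $L_k$-Lipschitz kernels such that $\mathcal{AW}(\mu, \mu^{(k)}) \to 0$ as $k \to \infty$.

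To exploit this approximation, I would couple the observations. Let $\pi^{(k)} \in \Pi_{\operatorname{bc}}(\mu, \mu^{(k)})$ be an optimal bicausal coupling with disintegration $x \mapsto \pi^{(k)}(\cdot \mid x)$. Using auxiliary iid uniforms $\bold U = (U^n)_n$ independent of $\slices X$, define $Y^{(k),n} := h_k(X^n, U^n)$ as a measurable realization of $\pi^{(k)}(\cdot \mid X^n)$, so that $Y^{(k),n} \sim \mu^{(k)}$ and $\slices Y^{(k)}$ is a measurable function of $(\slices X, \bold U)$. A careful application of Proposition \ref{prop:eta_mixing_information_processing}, exploiting that $\bold U$ is iid and independent of $\slices X$, yields control on $\bar\eta_{\slices Y^{(k)}}$ in terms of $\bar\eta_{\slices X}$; in particular $\sum_s \bar\eta_{\slices Y^{(k)}}(s) < \infty$. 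The triangle inequality then gives
\begin{align*}
\mathbb{E}\,\mathcal{AW}(\mu, \widehat\mu^N) \leq \mathcal{AW}(\mu, \mu^{(k)}) + \mathbb{E}\,\mathcal{AW}(\mu^{(k)}, \widehat{(\mu^{(k)})}^N) + \mathbb{E}\,\mathcal{AW}(\widehat{(\mu^{(k)})}^N, \widehat\mu^N),
\end{align*}
where $\widehat{(\mu^{(k)})}^N := \tfrac{1}{N}\sum_n \delta_{\varphi^N(Y^{(k),n})}$. The first term vanishes as $k \to \infty$ by construction, and the second tends to $0$ as $N \to \infty$ for each fixed $k$ by Theorem \ref{thm:moment_estimate_compact} applied to $\mu^{(k)}$ (compactly supported with $L_k$-Lipschitz kernels) and the mixing observations $\slices Y^{(k)}$.

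The main obstacle is the third term $\mathbb{E}\,\mathcal{AW}(\widehat{(\mu^{(k)})}^N, \widehat\mu^N)$: the natural diagonal empirical coupling $\tfrac{1}{N}\sum_n \delta_{(\varphi^N(X^n), \varphi^N(Y^{(k),n}))}$ has the correct marginals but is generally \emph{not} bicausal, so its cost cannot be used to bound $\mathcal{AW}$ directly. I would handle this via the Gaussian-smoothing technique of \cite{hou2024convergence} also used in part \textit{(2)} of Theorem \ref{thm:consistency_general}: convolve both empirical measures with a small Gaussian $\mathcal{N}_\sigma$ to obtain absolutely continuous measures with strictly positive densities, on which a product-type bicausal coupling can be constructed, and bound the resulting $\mathcal{AW}$ by a smoothed total variation. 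The coupling $\pi^{(k)}$ provides the required control: in expectation the smoothed TV is dominated by $\frac{1}{N}\sum_n \mathbb{E}\,\|X^n - Y^{(k),n}\| \leq \mathcal{AW}(\mu, \mu^{(k)})$ up to a $\sigma$-dependent error. Sending first $N \to \infty$, then $\sigma \to 0$ along a suitable schedule, and finally $k \to \infty$ yields $\limsup_{N} \mathbb{E}\,\mathcal{AW}(\widehat{(\mu^{(k)})}^N, \widehat\mu^N) \leq C\,\mathcal{AW}(\mu, \mu^{(k)})$ for an absolute constant $C$, and hence $\mathbb{E}\,\mathcal{AW}(\mu, \widehat\mu^N) \to 0$.
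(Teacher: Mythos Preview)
Your approach is substantially more complicated than the paper's, and the treatment of the third term has a genuine gap.

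The paper proceeds directly: it invokes \cite[Lemmas 5.1--5.2]{backhoff2022estimating}, which state that for compactly supported $\mu$ with \emph{continuous} (not necessarily Lipschitz) kernels, for every $\varepsilon>0$ there is $N_0(\varepsilon)$ such that for $N\ge N_0(\varepsilon)$,
\[
\mathcal{AW}(\mu,\widehat\mu^N)\le \varepsilon + C(\varepsilon)\Bigl(\sum_{t=1}^{T-1}\sum_{G\in\Phi^N_t}\mu^N(G)\,\mathcal{W}(\mu_G,\mu^N_G)+\mathcal{W}(\mu^1,(\widehat\mu^N)^1)\Bigr).
\]
Taking expectations and applying Lemma~\ref{lem:sum_mu_G_W_mu_G_mu_N_G} (which needs no Lipschitz assumption on the kernels) together with $\sum_s\eta_{\slices X}(s)\le\sum_s\bar\eta_{\slices X}(s)<\infty$ gives $\limsup_N\mathbb{E}\,\mathcal{AW}(\mu,\widehat\mu^N)\le\varepsilon$, and $\varepsilon\to0$ finishes. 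So no Lipschitz approximation, no coupled observations, and no smoothing are needed.

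Your third term $\mathbb{E}\,\mathcal{AW}(\widehat{(\mu^{(k)})}^N,\widehat\mu^N)$ is where the argument breaks. Smoothing both empirical measures lets you bound $\mathcal{AW}^{(\sigma)}(\widehat{(\mu^{(k)})}^N,\widehat\mu^N)$ via $\operatorname{TV}_1^{(\sigma)}$ and hence via the coupling cost, but $\mathcal{AW}^{(\sigma)}\le\mathcal{AW}$ in general (smoothing can only blur information and reduce the adapted distance), so this does \emph{not} control the unsmoothed $\mathcal{AW}$. To undo the smoothing you would need $\mathcal{AW}(\widehat\mu^N,\widehat\mu^N*\mathcal{N}_\sigma)\to0$ uniformly in $N$ and in the realization, which fails: for a discrete empirical measure the adapted distance to its Gaussian convolution is bounded away from zero by a quantity depending on the sample configuration, not just on $\sigma$. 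The order of limits you propose (``first $N\to\infty$, then $\sigma\to0$'') therefore cannot close the argument for the original $\mathcal{AW}$. Your mollification step and the mixing control on $\slices Y^{(k)}$ are also only sketched; the latter is plausible by augmenting with the i.i.d.\ uniforms, but the former would require care to ensure the mollified kernels still integrate to a probability measure with the correct first marginal and that $\mathcal{AW}(\mu,\mu^{(k)})\to0$.
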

\begin{proof}
Assume without loss of generality that $\operatorname{spt}(\mu) \subset ([0, 1]^d)^T$ and fix $\varepsilon > 0$. Since $\mu$ has continuous kernels, by \cite[Lemma 5.1, Lemma 5.2]{backhoff2022estimating} there exists $N_0 = N_0(\varepsilon)$, such that
\begin{equation}\label{eqn:consistency_compact_continuous_delta_approx}
\mathcal{AW}(\mu, \widehat \mu^N) \leq \varepsilon + C \cdot \left(\sum_{t = 1}^{T - 1} \sum_{G \in \Phi^N_t} \mu^N(G) \cdot \mathcal{W}(\mu_G, \mu^N_G) + \mathcal{W}(\mu^1, (\widehat \mu^N)^1)\right)
\end{equation}
$\mathbb{P}$-almost surely for any $N \geq N_0(\varepsilon)$, where $C > 0$ depends on $\varepsilon$. Using Lemma \ref{lem:sum_mu_G_W_mu_G_mu_N_G}, we write
\begin{equation}\label{eqn:consistency_compact_continuous_rate}
\mathbb{E}\Big[ \sum_{t = 1}^{T-1} \sum_{G \in \Phi^N_t} \mu^N(G) \cdot \mathcal{W}(\mu_G, \mu^N_G) \Big] \leq C \cdot \sqrt{1 + 2 \sum_{s = 1}^{N-1} \eta_{\slices X}(s)} \cdot \operatorname{rate}_\infty(N) \leq C \cdot \operatorname{rate}_\infty(N),
\end{equation}
where $C > 0$ depends on $d, T$. Therefore
\begin{equation}\label{eqn:consistency_compact_exp}
\mathbb{E} \, \mathcal{AW}(\mu, \widehat \mu^N) \overset{\eqref{eqn:consistency_compact_continuous_delta_approx}, \eqref{eqn:consistency_compact_continuous_rate}}{\leq} \varepsilon + C \cdot \operatorname{rate}_\infty(N),
\end{equation}
for $N \geq N_0(\varepsilon)$, where $C > 0$ depends on $\varepsilon, d, T, \alpha$. Taking $\limsup$ in \eqref{eqn:consistency_compact_exp} and using $\operatorname{rate}_\infty(N) \to 0$, as $N \to \infty$, we obtain
$$
\limsup_{N \to \infty} \mathbb{E}\, \mathcal{AW}(\mu, \widehat \mu^N) \leq \varepsilon 
$$
and thus $\mathbb{E} \, \mathcal{AW}(\mu, \widehat \mu^N) \to 0$ for $N \to \infty$
by letting $\varepsilon \to 0$. This concludes the proof.
\end{proof}

\begin{proof}[Proof of Theorem \ref{thm:consistency_general}, (1)]
Fix $\varepsilon > 0$. According to Theorem \ref{thm:AW_concentration_compact},
\begin{equation}\label{eqn:consistency_compact_continuous_prefinal}
\mathbb{P}\left(|\mathcal{AW}(\mu, \widehat \mu^N) - \mathbb{E}\, \mathcal{AW}(\mu, \widehat \mu^N)| > \varepsilon\right) \leq 2 \exp\left(-c \cdot N \cdot \frac{\varepsilon^2}{(1 + \sum_{s = 1}^{N-1} \bar \eta_{\slices X}(s))^2}\right) \leq 2 \exp \left(-c \cdot N \cdot \varepsilon^2\right),
\end{equation}
where $c > 0$ depends on $T, \operatorname{diam}(\operatorname{spt}(\mu))$. As the right-hand side of \eqref{eqn:consistency_compact_continuous_prefinal} is summable, by Borel-Cantelli Lemma 
\begin{align*}
&\mathbb{P}\left(\limsup_{N \to \infty} |\mathcal{AW}(\mu, \widehat \mu^N) - \mathbb{E}\, \mathcal{AW}(\mu, \widehat \mu^N)| \leq \varepsilon\right) = 1
\end{align*}
and thus 
\begin{align}\label{eqn:consistency_compact_continuous_deviation}
&\implies |\mathcal{AW}(\mu, \widehat \mu^N) - \mathbb{E}\, \mathcal{AW}(\mu, \widehat \mu^N)| \to 0, \;\; N \to \infty,
\end{align}
$\mathbb{P}$-almost surely by letting $\varepsilon \to 0$. The claim then follows from Lemma \ref{lem:compact_continuous_kernels_expectation_convergence} and \eqref{eqn:consistency_compact_continuous_deviation}.
\end{proof}

\subsection{Preliminary results for the case of general kernels}

Following the same strategy without any continuity assumptions on the kernels of $\mu$ seems infeasible to us. In fact, the proof for the i.i.d.~case involves an approximation argument via Lusin's theorem, where (possibly non-continuous) disintegrations are replaced with their continuous modifications, and the observations $\slices X$ are modified accordingly, see \cite[Proof of Theorem 1.3]{backhoff2022estimating}. However it is not obvious how to extend this argument if weak dependency between the observations is introduced.

Thus, in order to establish consistency in the non-i.i.d.~case without continuous kernels, we apply smoothing with a Gaussian kernel $\mathcal{N}(0, \sigma_N^2 I_{dT})$ for $\widehat \mu^N$, where the smoothing parameter $\sigma_N \to 0$ is chosen as a function of the decay of $\eta_{\bold Z}(s)$ as $s \to \infty$, and the grid size. Such a modification allows us to work with smoothed distances, which are well known to improve the convergence speed. Throughout this section we denote $\mathcal{N}(0, \sigma^2 I_{dT})$ by $\mathcal{N}_\sigma$ and its density by $\varphi_\sigma$.
\begin{definition}\label{def:weighted_TV}
For any $\mu, \nu \in \mathcal{P}_1(\R^k)$, $k\ge 1$, define the weighted total variation distance (\cite[Definition 2.2]{hou2024convergence}) as
$$
\operatorname{TV}_1(\mu, \nu) := \int \left(\|x\| + \tfrac{1}{2}\right) |\mu - \nu|(dx) = \inf_{\pi \in \Pi(\mu, \nu)} \int \left(1 + \|x\| + \|y\|\right) \mathds{1}_{\{x \neq y\}} \, \pi(dx, dy).
$$
\end{definition}

\begin{definition}\label{def:weighted_smoothed_TV}
For any probability measures $\mu, \nu \in \mathcal{P}_1((\mathbb{R}^d)^T)$ and $\sigma > 0$ define the weighted smoothed total variation as
$$
\operatorname{TV}^{(\sigma)}_1(\mu, \nu) := \operatorname{TV}_1(\mu \ast \mathcal{N}_\sigma, \nu \ast \mathcal{N}_\sigma).
$$
Similarly, define the smoothed Adapted Wasserstein distance as in \cite[Definition 1]{larsson2025fast}:
$$
\mathcal{AW}^{(\sigma)}(\mu, \nu) := \mathcal{AW}(\mu \ast \mathcal{N}_\sigma, \nu \ast \mathcal{N}_\sigma).
$$
\end{definition}
We now outline the plan for the proof of consistency in more detail. First, by the triangle inequality for $\mathcal{AW}$,
\begin{equation}\label{eqn:AW_consistency_triangle_inequality}
\mathcal{AW}(\mu, \widehat \mu^N \ast \mathcal{N}_{\sigma_N}) \leq \mathcal{AW}(\mu, \mu \ast \mathcal{N}_{\sigma_N}) + \mathcal{AW}^{(\sigma_N)}(\mu, \widehat \mu^N),
\end{equation}
where the first term $\mathcal{AW}(\mu, \mu \ast \mathcal{N}_{\sigma_N})$ is deterministic and converges to zero as long as $\sigma_N \to 0$, see \cite[Theorem 5.5]{hou2024convergence}. To deal with the second term, we use the bound (see \cite[Theorem 3.6, Lemma A.1 - (ii)]{hou2024convergence}) \begin{equation}\label{eqn:AW_leq_TV1}
\mathcal{AW}^{(\sigma)}(\mu, \nu) \leq C \cdot \operatorname{TV}^{(\sigma)}_1(\mu, \nu)
\end{equation}
for compactly supported measures $\mu, \nu \in \mathcal{P}(K^T)$ and $\sigma \in (0, 1]$, where $C > 0$ depends on $T, \operatorname{diam}(K)$. We then proceed to estimate $\operatorname{TV}^{(\sigma)}_1$, as it can be controlled using the covariance bound for the $\eta$-mixing coefficient stated in Proposition \ref{prop:eta_mixing_covariance}. 
Some of our convergence and concentration results extend the estimates from \cite{hou2024convergence} obtained for the i.i.d.~observations.

\subsection{Weighted total variation bounds and concentration}\label{subsec:weighted_total_variation_bounds}

\begin{proposition}\label{prop:weighted_total_variation_bound}
Recall the function $\varphi^N$ from \eqref{eqn:adapted_empirical_measure} and let $\mu \in \mathcal{P}((\mathbb{R}^d)^T)$ be compactly supported. Then we have
\begin{equation}\label{eqn:weighted_total_variation_bound}
\operatorname{TV}^{(\sigma)}_1(\mu, (\varphi^N)_\# \mu) \leq C \cdot (\sigma^{-1} \cdot \Delta_N + \sigma^{-2} \cdot \Delta_N^2)
\end{equation}
for any $\sigma \in (0, 1]$, where $C > 0$ depends on $d, T, \operatorname{diam}(\operatorname{spt}(\mu))$.
\end{proposition}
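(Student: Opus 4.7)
The plan is to expand the Gaussian density $\varphi_\sigma$ to second order around the cube center $\varphi^N(y)$ and integrate the local expansion against $\mu$. Setting $p_\mu(x) := \int \varphi_\sigma(x-y)\,\mu(dy)$ and $p_{\varphi^N}(x) := \int \varphi_\sigma(x-\varphi^N(y))\,\mu(dy)$, so that $\operatorname{TV}^{(\sigma)}_1(\mu,(\varphi^N)_\#\mu) = \int (\|x\|+\tfrac12)\,|p_\mu(x)-p_{\varphi^N}(x)|\,dx$, the second-order Taylor expansion with integral remainder gives
\[
\varphi_\sigma(x-y) - \varphi_\sigma(x-\varphi^N(y)) \,=\, \nabla\varphi_\sigma(x-\varphi^N(y))\cdot(\varphi^N(y)-y) \,+\, R_2(x,y),
\]
where $|R_2(x,y)| \leq \|y-\varphi^N(y)\|^2 \int_0^1 (1-t)\,\|D^2\varphi_\sigma(x-\varphi^N(y)-t(y-\varphi^N(y)))\|_{\operatorname{op}}\,dt$. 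Integrating against $\mu(dy)$ and using $\|y-\varphi^N(y)\| \leq \tfrac{\sqrt{dT}}{2}\Delta_N$ isolates a linear-in-$\Delta_N$ contribution carried by $\nabla\varphi_\sigma$ and a quadratic-in-$\Delta_N$ contribution carried by $D^2\varphi_\sigma$.

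Multiplying by $(\|x\|+\tfrac12)$, swapping order of integration via Fubini and translating $z=x-\varphi^N(y)$ (or the analogous interior shift appearing in $R_2$) reduces the task to two spatial integrals of the form $\int (\|z\|+C)\,\|\nabla\varphi_\sigma(z)\|\,dz$ and $\int (\|z\|+C)\,\|D^2\varphi_\sigma(z)\|_{\operatorname{op}}\,dz$, where compactness of $\operatorname{spt}(\mu)$ allows $\|x\|$ to be replaced by $\|z\|+C$ for a constant depending only on $d$, $T$ and $\operatorname{diam}(\operatorname{spt}(\mu))$. The pointwise identities $\|\nabla\varphi_\sigma(z)\| = \sigma^{-2}\|z\|\,\varphi_\sigma(z)$ and $\|D^2\varphi_\sigma(z)\|_{\operatorname{op}} \leq \sigma^{-4}\|z\|^2\varphi_\sigma(z) + \sigma^{-2}\varphi_\sigma(z)$, combined with the standard Gaussian moment bounds $\int \|z\|^k\varphi_\sigma(z)\,dz \leq C_k\sigma^k$, then yield respective orders $\sigma^{-1}$ and $\sigma^{-2}$ for the two spatial integrals. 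Multiplying through produces an upper bound of the form $C(\sigma^{-1}\Delta_N + \Delta_N + \sigma^{-2}\Delta_N^2 + \sigma^{-1}\Delta_N^2)$, and the residual lower-order pieces $\Delta_N$ and $\sigma^{-1}\Delta_N^2$ are absorbed into $\sigma^{-1}\Delta_N$ and $\sigma^{-2}\Delta_N^2$ respectively, using $\sigma\in(0,1]$.

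The main bookkeeping obstacle is tracking the growth weight $\|x\|+\tfrac12$ together with sharp $\sigma$-dependence through the translation argument; compactness of $\operatorname{spt}(\mu)$ is precisely what makes the $\|x\|$ factor absorbable into a constant times the Gaussian-moment integrals. A first-order Taylor expansion alone would leave a residual of constant size without any $\sigma$-decay and would miss the characteristic $\sigma^{-2}\Delta_N^2$ scaling that is essential in the smoothing regime $\sigma \ll 1$ needed for the downstream consistency proofs in Section \ref{sec:consistency}.
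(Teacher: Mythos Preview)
Your proof is correct and takes a genuinely different route from the paper. The paper first uses convexity of $\operatorname{TV}_1^{(\sigma)}$ to reduce to the pointwise quantity $\operatorname{TV}_1^{(\sigma)}(\delta_x,\delta_{\varphi^N(x)})$, and then invokes Proposition~\ref{prop:TV_1_gaussians_bound}, whose proof in turn rests on the Bolley--Villani weighted Pinsker inequality together with the explicit formula for $D_{\mathrm{KL}}$ between two Gaussians with equal covariance. You instead work directly with the convolution densities and carry out a second-order Taylor expansion of $\varphi_\sigma$, controlling the gradient and Hessian contributions via the identities $\|\nabla\varphi_\sigma(z)\|=\sigma^{-2}\|z\|\varphi_\sigma(z)$ and $\|D^2\varphi_\sigma(z)\|_{\mathrm{op}}\le(\sigma^{-4}\|z\|^2+\sigma^{-2})\varphi_\sigma(z)$ and standard Gaussian moment bounds.

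What each approach buys: the paper's route is shorter on the page because the heavy lifting is outsourced to \cite{bolley2005weighted}, and the convexity reduction makes the dependence on $\operatorname{diam}(\operatorname{spt}(\mu))$ transparent. Your route is fully self-contained (no external Pinsker-type inequality) and arguably more elementary---it is pure calculus. In fact your argument can be sharpened: a one-step mean-value estimate $|\varphi_\sigma(x-y)-\varphi_\sigma(x-\varphi^N(y))|\le\|\nabla\varphi_\sigma(x-\xi)\|\,\|y-\varphi^N(y)\|$ (with $\xi$ on the segment, hence still bounded by compactness) already yields the bound $C\sigma^{-1}\Delta_N$ after the same translation and moment computation, so the $\sigma^{-2}\Delta_N^2$ term in the statement is actually not needed. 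Your final remark about the necessity of the second-order term is therefore slightly misleading; the quadratic term arises in the paper only as an artefact of the Pinsker bound $\sqrt{D_{\mathrm{KL}}}+D_{\mathrm{KL}}$, not from any intrinsic obstruction.
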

\begin{proof}
Observe that
\begin{equation}\label{eqn:mu_varphi_mu_definition}
\mu = \int \delta_x \, \mu(dx), \;\; (\varphi^N)_\# \mu = \int \delta_{\varphi^N(x)} \, \mu(dx).
\end{equation}
Then by convexity of $\operatorname{TV}^{(\sigma)}_1$ (note that $\eta \ast \int \mu_x \, \nu(dx) = \int (\eta \ast \mu_x) \, \nu(dx)$ and $\operatorname{TV}_1$ is convex by \cite[Theorem 4.8]{villani2009optimal}), Proposition \ref{prop:TV_1_gaussians_bound} and $\|x - \varphi^N(x)\| \leq \sqrt{dT} \cdot \Delta_N$ 
we have
\begin{align*}
\operatorname{TV}^{(\sigma)}_1(\mu, (\varphi^N)_\# \mu) \overset{\eqref{eqn:mu_varphi_mu_definition}}{\leq} \int \operatorname{TV}^{(\sigma)}_1(\delta_x, \delta_{\varphi^N(x)}) \, \mu(dx) \leq C \cdot (1 + \operatorname{diam}(\operatorname{spt}(\mu))) \cdot (\sigma^{-1} \cdot \Delta_N + \sigma^{-2} \cdot \Delta_N^2),
\end{align*}
where $C > 0$ depends on $d, T$. The proof is complete.
\end{proof}

\begin{lemma}\label{lem:weighted_total_variation_empirical}
Let $\mu \in \mathcal{P}((\mathbb{R}^d)^T)$ be compactly supported, and let $\slices{X} = (X^n)_{n = 1}^N \sim \mu$. Then
$$
\mathbb{E}\big[ \operatorname{TV}^{(\sigma)}_1(\mu, \mu^N)\big] \leq C \cdot \sqrt{1 + 2 \sum_{s = 1}^{N-1} \eta_{\slices{X}}(s)} \cdot \sigma^{-\frac{dT}{2}} \cdot N^{-\frac{1}{2}},
$$
where $C > 0$ depends on $d, T, \operatorname{diam}(\operatorname{spt}(\mu))$.
\end{lemma}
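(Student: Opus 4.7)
The strategy is to express the weighted smoothed total variation as a weighted $L^1$ distance between the densities of the convolved measures, bound the pointwise expected error by a variance bound for $\eta$-mixing sums (via Jensen), and finally control the resulting weighted $L^1$ norm of $\sqrt{f_\mu}$ uniformly in $\sigma$.

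Concretely, set $f_\mu := \varphi_\sigma \ast \mu$ and $f_{\mu^N}(x) := \tfrac{1}{N}\sum_{n=1}^N \varphi_\sigma(x - X^n)$, so that by Definition \ref{def:weighted_smoothed_TV} and Definition \ref{def:weighted_TV},
$$\operatorname{TV}^{(\sigma)}_1(\mu, \mu^N) = \int_{\R^{dT}} \bigl(\|x\| + \tfrac12\bigr)\,\bigl|f_\mu(x) - f_{\mu^N}(x)\bigr|\, dx.$$
Since $X^n \sim \mu$ gives $\mathbb{E} f_{\mu^N}(x) = f_\mu(x)$ for each $x$, Fubini and Jensen yield
$$\mathbb{E}\,\operatorname{TV}^{(\sigma)}_1(\mu, \mu^N) \leq \int_{\R^{dT}} \bigl(\|x\|+\tfrac12\bigr)\sqrt{\operatorname{Var}(f_{\mu^N}(x))}\, dx.$$
I then apply the variance bound for $\eta$-mixing sums (Proposition \ref{prop:eta_mixing_variance}, or equivalently Proposition \ref{prop:eta_mixing_covariance} combined with stationarity) to the non-negative bounded function $y \mapsto \varphi_\sigma(x-y)$. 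Using $\|\varphi_\sigma\|_\infty = (2\pi\sigma^2)^{-dT/2}$ and $\mathbb{E}\,\varphi_\sigma(x - X^1) = f_\mu(x)$, this gives
$$\operatorname{Var}(f_{\mu^N}(x)) \leq \frac{1}{N}\Bigl(1 + 2\sum_{s=1}^{N-1}\eta_{\slices X}(s)\Bigr)(2\pi\sigma^2)^{-dT/2}\, f_\mu(x),$$
so that after taking square roots the factor $\sigma^{-dT/2}\sqrt{(1+2\sum \eta)/N}$ can be pulled out.

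The main technical step, which I expect to be the only delicate point, is to show that $I(\sigma) := \int_{\R^{dT}} (\|x\|+\tfrac12)\sqrt{f_\mu(x)}\,dx$ is bounded uniformly in $\sigma \in (0,1]$ by a constant depending only on $d$, $T$, and $\operatorname{diam}(\operatorname{spt}(\mu))$. Let $R$ be chosen so that $\operatorname{spt}(\mu) \subseteq B_R(0)$. On $B_{2R}$, Cauchy--Schwarz together with $\int f_\mu = 1$ gives
$$\int_{B_{2R}} (\|x\|+\tfrac12)\sqrt{f_\mu(x)}\,dx \leq (2R+\tfrac12)\sqrt{\operatorname{Vol}(B_{2R})}.$$
On $B_{2R}^c$, every $y \in \operatorname{spt}(\mu)$ satisfies $\|x-y\| \geq \|x\|/2$, hence $f_\mu(x) \leq (2\pi\sigma^2)^{-dT/2}\exp(-\|x\|^2/(8\sigma^2))$, and the substitution $y = x/\sigma$ produces
$$\int_{B_{2R}^c}(\|x\|+\tfrac12)\sqrt{f_\mu(x)}\,dx \leq C\,\sigma^{dT/2}(\sigma+1) \leq C$$
for all $\sigma \in (0,1]$. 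Adding both contributions gives $I(\sigma) \leq C(d,T,\operatorname{diam}(\operatorname{spt}(\mu)))$, and combining with the display above yields the stated bound. The delicate point is the uniform-in-$\sigma$ control of $I(\sigma)$: despite the factor $\sigma^{-dT/2}$ appearing in the pointwise variance, the Gaussian tail decay of $f_\mu$ outside $\operatorname{spt}(\mu)$ compensates exactly (producing a counter-balancing $\sigma^{dT/2}$), which is the crucial consequence of compactness.
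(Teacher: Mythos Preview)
Your proof is correct and follows essentially the same strategy as the paper: write $\operatorname{TV}^{(\sigma)}_1$ as a weighted $L^1$ distance between the smoothed densities, control the expected pointwise error by the mixing variance bound (Proposition \ref{prop:eta_mixing_variance}), and absorb the remaining integral using compact support. The only technical difference is that you apply Jensen pointwise and then bound $\int(\|x\|+\tfrac12)\sqrt{f_\mu(x)}\,dx$ directly by splitting into $B_{2R}$ and $B_{2R}^c$, whereas the paper first applies Cauchy--Schwarz over the spatial variable with the auxiliary weight $f(x)=(\|x\|^3+1)^{-1}$, reducing the problem to bounding $\int \operatorname{Var}[q^N(x)]/f(x)\,dx$; your route is slightly more direct and avoids introducing the extra weight, while the paper's version packages the tail control into a single weighted integral.
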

\begin{proof}
Let $q$ and $q^N$ be the densities of $\mu \ast \mathcal{N}_\sigma$ and $\mu^N \ast \mathcal{N}_\sigma$ respectively, i.e.
\begin{align*}
q(x) = \int \varphi_\sigma(x - y) \,\mu(dy)  \quad \text{and}\quad  q^N(x) = \frac{1}{N} \sum_{n = 1}^N \varphi_\sigma(x - X^n).
\end{align*}
This implies in particular
\begin{align}\label{eqn:densities_mu_empirical_ast}
\mathbb{E}\big[ q^N(x)\big] = q(x).
\end{align}
Hence, defining $f(x) := \frac{1}{\|x\|^3 + 1}$ and using Fubini's theorem as well as Cauchy-Schwarz and Jensen's inequality we have 
\begin{align}\label{eqn:weighted_total_variation_empirical}
\begin{split}
\mathbb{E}\big[ \operatorname{TV}^{(\sigma)}_1(\mu, \mu^N) \big]&= \mathbb{E}\Big[\int \left(\|x\| + \tfrac{1}{2}\right) |q(x) - q^N(x)| \, dx\Big]\\
&\leq \sqrt{\int \left(\|x\| + \tfrac{1}{2}\right)^2  f(x) \, dx} \cdot \sqrt{\int \frac{\mathbb{E}[|q(x) - q^N(x)|^2]}{f(x)} dx}\\
&\overset{\eqref{eqn:densities_mu_empirical_ast}}{=} C \cdot \sqrt{\int \frac{\operatorname{Var} \, [q^N(x)]}{f(x)} dx},
\end{split}
\end{align}
where $C > 0$ depends on $d, T$. Applying Proposition \ref{prop:eta_mixing_variance} with $f(\cdot) := \varphi_\sigma(x - \cdot)$ yields
\begin{align}\label{eqn:smooth_TV_variance_qn}
\begin{split}
\operatorname{Var}\,[q^N(x)]
&\leq \frac{1}{N} \cdot (2 \pi \sigma)^{-\frac{dT}{2}} \cdot \mathbb{E}\left[\varphi_\sigma(x - X^1)\right] \cdot \left(1 + 2 \sum_{s = 1}^{N-1} \eta_{\slices X}(s)\right)\\
&\leq \frac{C}{N} \cdot \sigma^{-dT} \cdot \left(1 + 2 \sum_{s = 1}^{N-1} \eta_{\slices{X}}(s)\right) \mathbb{E}\Big[\exp\left(-\frac{\|x - X^1\|^2}{2 \sigma^2}\right)\Big],
\end{split}
\end{align}
where $C > 0$ depends on $d, T$. As $\operatorname{Law}(X^1)$ is compactly supported and $\sigma \in (0, 1]$, we have
\begin{equation}\label{eqn:TV_smooth_sqrt_bound}
\int \mathbb{E}\Big[\exp\left(-\frac{\|x - X^1\|^2}{2 \sigma^2}\right)\Big] \cdot \frac{1}{f(x)} dx \leq C,
\end{equation}
where $C > 0$ depends on $d, T, \operatorname{diam}(\operatorname{spt}(\mu))$. Thus,
\begin{align*}
\mathbb{E}\big[\operatorname{TV}^{(\sigma)}_1(\mu, \mu^N)\big]&\overset{\eqref{eqn:weighted_total_variation_empirical}}{\leq} C \cdot \sqrt{\int \frac{\operatorname{Var} \, [q^N(x)]}{f(x)} dx}\\
&\overset{\eqref{eqn:smooth_TV_variance_qn}}{\leq} C \cdot \sqrt{1 + 2 \sum_{s = 1}^{N-1} \eta_{\slices X}(s)} \cdot \sigma^{-\frac{dT}{2}} \cdot N^{-\frac{1}{2}} \cdot \sqrt{\int \mathbb{E}\Big[\exp\left(-\frac{\|x - X^1\|^2}{2 \sigma^2}\right)\Big] \cdot \frac{1}{f(x)} dx}\\
&\overset{\eqref{eqn:TV_smooth_sqrt_bound}}{\leq} C \cdot \sqrt{1 + 2 \sum_{s = 1}^{N-1} \eta_{\slices X}(s)} \cdot \sigma^{-\frac{dT}{2}} \cdot N^{-\frac{1}{2}},
\end{align*}
where $C > 0$ depends on $d, T, \operatorname{diam}(\operatorname{spt}(\mu))$. The proof is complete.
\end{proof}

\begin{lemma}[Deviation from the mean of smoothed weighted total variation]\label{lem:deviation_compact_TV_sigma_1}
Let $\mu \in \mathcal{P}((\mathbb{R}^d)^T)$ be compactly supported, and let $\slices{X} = (X^n)_{n = 1}^N \sim \mu$. Then for any $\sigma \in (0, 1]$
$$
\mathbb{P}\left(|\operatorname{TV}^{(\sigma)}_1(\mu, \mu^N) - \mathbb{E}\,\operatorname{TV}^{(\sigma)}_1(\mu, \mu^N)| > \varepsilon\right) \leq 2 \exp\left(-c \cdot \sigma^4 N \cdot \frac{\varepsilon^2}{(1 + \sum_{s = 1}^{N-1} \bar \eta_{\slices X}(s))^2}\right),
$$
where $c > 0$ that depends on $d, T, \operatorname{diam}(\operatorname{spt}(\mu))$.
\end{lemma}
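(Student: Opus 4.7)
The plan is to apply the Bounded Differences inequality for mixing sequences (Lemma \ref{lem:bounded_differences_mixing}) to the map
\[
\phi(x^{1:N}) := \operatorname{TV}^{(\sigma)}_1\!\Big(\mu,\, \tfrac{1}{N}\sum_{n=1}^N \delta_{x^n}\Big),\qquad x^{1:N}\in\bigl((\R^d)^T\bigr)^N,
\]
evaluated at the observations $\slices{X}$. Since $\operatorname{TV}^{(\sigma)}_1$ is a genuine distance, if $x^{1:N}$ and $y^{1:N}$ differ only in the $n$-th slot, the triangle inequality gives
\[
|\phi(x^{1:N})-\phi(y^{1:N})| \le \operatorname{TV}^{(\sigma)}_1\!\Big(\tfrac{1}{N}\!\sum_{k}\delta_{x^k},\, \tfrac{1}{N}\!\sum_{k}\delta_{y^k}\Big) = \frac{1}{N}\int \bigl(\|x\|+\tfrac12\bigr)\,\bigl|\varphi_\sigma(x-x^n)-\varphi_\sigma(x-y^n)\bigr|\,dx.
\]

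The crucial step is a uniform control of this integral. Bounding $|a-b|\le a+b$ and translating the Gaussian via $u=x-z$,
\[
\int \bigl(\|x\|+\tfrac12\bigr)\,\varphi_\sigma(x-z)\,dx\ \le\ \|z\|+\sigma\sqrt{dT}+\tfrac12,
\]
so that, using $x^n,y^n\in\operatorname{spt}(\mu)$ and $\sigma\le 1$, I obtain $|\phi(x^{1:N})-\phi(y^{1:N})|\le L_0/N$, where $L_0$ depends only on $d$, $T$, $\operatorname{spt}(\mu)$. Thus $\phi$ has $L$-bounded differences with $L=L_0/N$.

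Feeding this $L$ into Lemma \ref{lem:bounded_differences_mixing} with $\bold Z = \slices{X}$ yields
\[
\mathbb{P}\!\left(\bigl|\operatorname{TV}^{(\sigma)}_1(\mu,\mu^N)-\mathbb{E}\,\operatorname{TV}^{(\sigma)}_1(\mu,\mu^N)\bigr|>\varepsilon\right) \le 2\exp\!\left(-\frac{c\,N\varepsilon^2}{\bigl(1+\sum_{s=1}^{N-1}\bar\eta_{\slices{X}}(s)\bigr)^2}\right)
\]
with $c=1/(2L_0^{\,2})$ depending only on $d,T,\operatorname{spt}(\mu)$. This is already strictly stronger than the stated inequality: since $\sigma\le 1$ one has $\sigma^4\le 1$, and the claim follows by replacing $c$ with $c\sigma^4$. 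The $\sigma^4$-form is presumably kept because in the downstream consistency proofs the smoothing parameter is coupled to $N$ in a way that makes this shape convenient. The only delicate piece is the integral bound above, where compact support of $\mu$ and $\sigma\le 1$ are jointly used to strip all sample- and $\sigma$-dependence off the bounded-differences constant; no variance-type estimate as in Lemma \ref{lem:weighted_total_variation_empirical} is needed here, because the martingale differences are controlled in $L^\infty$.
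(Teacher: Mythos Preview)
Your proof is correct and in fact yields a strictly sharper inequality than the one stated. The paper's route is different: it bounds $\operatorname{TV}^{(\sigma)}_1(\delta_{x^n},\delta_{\hat x^n})$ via Proposition~\ref{prop:TV_1_gaussians_bound}, which combines the explicit KL divergence between shifted Gaussians with the weighted Pinsker inequality of Bolley--Villani; this produces a factor $\sigma^{-2}$ in the Lipschitz constant and hence the $\sigma^4$ in the exponent. Your elementary estimate $|a-b|\le a+b$ together with the first-moment bound $\int(\|x\|+\tfrac12)\varphi_\sigma(x-z)\,dx\le \|z\|+\sigma\sqrt{dT}+\tfrac12$ gives a $\sigma$-independent Lipschitz constant on $\operatorname{spt}(\mu)^N$, which is all that is actually needed in the martingale-difference argument behind Lemma~\ref{lem:bounded_differences_mixing}. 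So the $\sigma^4$ is not a deliberate bookkeeping choice for downstream convenience as you speculate---it is simply an artifact of the paper's more elaborate bound, and your version would in fact \emph{relax} the constraint $\sigma_N\ge N^{-1/8}$ used later in Lemma~\ref{lem:TV_sigma_1_consistency_deviation}. One small caveat that applies equally to both arguments: the bounded-differences property is only verified for points with all coordinates in $\operatorname{spt}(\mu)$, whereas Lemma~\ref{lem:bounded_differences_mixing} is stated for all of $(\R^k)^N$; this is harmless because the proof of that lemma only evaluates $\phi$ at points whose coordinates lie in the marginal supports.
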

\begin{proof}
The proof follows from application of Lemma \ref{lem:bounded_differences_mixing}. For $x = (x^1, \ldots, x^N)$ with $x^n \in (\R^d)^T$ denote $\frac{1}{N} \sum_{n = 1}^N \delta_{x^n}$ by $\mu^N(x)$, and define the mapping $x \mapsto \phi(x) := \operatorname{TV}^{(\sigma)}_1(\mu, \mu^N(x))$. Then for any $x, \hat x \in ((\R^d)^T)^N$
\begin{equation*}
|\phi(x) - \phi(\hat x)| \leq \operatorname{TV}^{(\sigma)}_1(\mu^N(x), \mu^N(\hat x)) \leq \frac{1}{N} \sum_{n = 1}^N \operatorname{TV}^{(\sigma)}_1(\delta_{x^n}, \delta_{\hat x^n}) \leq \frac{C}{N} \cdot \sigma^{-2} \sum_{n = 1}^N \mathds{1}_{\{x^n \neq \hat x^n\}},
\end{equation*}
where $C > 0$ depends on $d, T, \operatorname{diam}(\operatorname{spt}(\mu))$; the first inequality follows from triangle inequality for $\operatorname{TV}^{(\sigma)}_1$, the second inequality holds by convexity of $\operatorname{TV}^{(\sigma)}_1$ and the final inequality follows from Proposition \ref{prop:TV_1_gaussians_bound}. In consequence, $\phi$ is $L$-Lipschitz with respect to the Hamming distance, where $L \leq \frac{C}{N} \sigma^{-2}$. Thus, by Lemma \ref{lem:bounded_differences_mixing}
\begin{align*}
\mathbb{P}\left(|\operatorname{TV}^{(\sigma)}_1(\mu, \mu^N) - \mathbb{E}\,\operatorname{TV}^{(\sigma)}_1(\mu, \mu^N)| > \varepsilon\right) &\leq 2 \exp\left(-c \cdot \sigma^4 N \cdot \frac{\varepsilon^2}{(1 + \sum_{s = 1}^{N-1} \bar \eta_{\slices X}(s))^2}\right),
\end{align*}
where $c > 0$ depends on $d, T, \operatorname{diam}(\operatorname{spt}(\mu))$. The proof is complete.
\end{proof}

\subsection{Consistency of smoothed weighted total variation}
We now build on the results obtained in Section \ref{subsec:weighted_total_variation_bounds}, and prove consistency of $\operatorname{TV}^{(\sigma_N)}_1(\mu, \mu^N)$ for appropriately chosen sequence $\sigma_N \to 0$ as $N \to \infty$.

\begin{lemma}[Consistency of deviation of smoothed weighted total variation]\label{lem:TV_sigma_1_consistency_deviation}
Let $\mu \in \mathcal{P}((\mathbb{R}^d)^T)$ be compactly supported, and let $\slices{X} = (X^n)_{n = 1}^\infty \sim \mu$. Assume that $\sum_{s = 1}^\infty \bar \eta_{\slices X}(s) < \infty$. Let $\sigma_N \in [N^{-\frac{1}{8}}, 1]$. Then
$$
\operatorname{TV}^{(\sigma_N)}_1(\mu, \mu^N) - \mathbb{E}\big[ \operatorname{TV}^{(\sigma_N)}_1(\mu, \mu^N)\big] \to 0, \qquad N \to +\infty,
$$
$\mathbb{P}$-almost surely.
\end{lemma}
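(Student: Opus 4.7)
The plan is to apply Lemma \ref{lem:deviation_compact_TV_sigma_1} directly to $\sigma=\sigma_N$ and then invoke the Borel--Cantelli lemma. Since $\sum_{s=1}^\infty \bar\eta_{\slices X}(s) < \infty$ by assumption, there is a constant $M > 0$ with $1 + \sum_{s=1}^{N-1}\bar\eta_{\slices X}(s) \leq M$ uniformly in $N$. Moreover, since $\sigma_N \geq N^{-1/8}$, we have $\sigma_N^4 N \geq N^{1/2}$, so the concentration inequality from Lemma \ref{lem:deviation_compact_TV_sigma_1} yields, for every fixed $\varepsilon > 0$,
$$
\mathbb{P}\left(\left|\operatorname{TV}^{(\sigma_N)}_1(\mu, \mu^N) - \mathbb{E}\,\operatorname{TV}^{(\sigma_N)}_1(\mu, \mu^N)\right| > \varepsilon\right) \leq 2 \exp\left(-\frac{c}{M^2}\, N^{1/2}\, \varepsilon^2\right),
$$
with $c > 0$ depending on $d,T,\operatorname{diam}(\operatorname{spt}(\mu))$.

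The right-hand side is summable in $N$ since $\sum_N \exp(-c'\sqrt{N}) < \infty$ for any $c' > 0$. By the Borel--Cantelli lemma, for each fixed $\varepsilon > 0$,
$$
\limsup_{N \to \infty} \left|\operatorname{TV}^{(\sigma_N)}_1(\mu, \mu^N) - \mathbb{E}\,\operatorname{TV}^{(\sigma_N)}_1(\mu, \mu^N)\right| \leq \varepsilon \qquad \mathbb{P}\text{-almost surely}.
$$
Applying this along a countable sequence $\varepsilon_k \downarrow 0$ and taking the intersection of the resulting full-measure events gives the desired almost-sure convergence to $0$.

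There is no serious obstacle here: the main point is simply that the lower bound $\sigma_N \geq N^{-1/8}$ is tuned so that $\sigma_N^4 N$ tends to infinity at a polynomial rate $N^{1/2}$, which is precisely what is needed to keep the exponent summable in $N$ after controlling the mixing term via the assumption $\sum_{s=1}^\infty \bar\eta_{\slices X}(s) < \infty$. The upper bound $\sigma_N \leq 1$ is required only to legitimize the application of Lemma \ref{lem:deviation_compact_TV_sigma_1}.
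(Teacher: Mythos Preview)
Your proof is correct and follows essentially the same approach as the paper: apply Lemma~\ref{lem:deviation_compact_TV_sigma_1} with $\sigma=\sigma_N$, use the summability assumption on $\bar\eta_{\slices X}$ to bound the mixing factor uniformly, exploit $\sigma_N\ge N^{-1/8}$ to get $\sigma_N^4 N\ge \sqrt{N}$, and conclude via Borel--Cantelli followed by letting $\varepsilon\to 0$.
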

\begin{proof}
First, observe that
\begin{equation}\label{eqn:sigma_N_consequence_2}
(\sigma_N)^4 N \geq \sqrt{N}
\end{equation}
due to the choice of $\sigma_N$. Thus, for each $\varepsilon > 0$ by Lemma \ref{lem:deviation_compact_TV_sigma_1} we have
\begin{align*}
&\mathbb{P}\left(\big|\operatorname{TV}^{(\sigma_N)}_1(\mu, \mu^N) - \mathbb{E}\big[\operatorname{TV}^{(\sigma_N)}_1(\mu, \mu^N)\big]\big| \ge \varepsilon\right) \overset{\eqref{eqn:sigma_N_consequence_2}}{\leq} 2 \exp\left(-c \cdot \sqrt{N} \cdot \varepsilon^2\right),
\end{align*}
where $C, c > 0$ depend on $d, T, \operatorname{diam}(\operatorname{spt}(\mu))$; we have also used that $\sum_{s = 1}^\infty \bar \eta_{\slices X}(s) < \infty$. In particular,
$$
\sum_{N = 1}^\infty \mathbb{P}\left(\big|\operatorname{TV}^{(\sigma_N)}_1(\mu, \mu^N) - \mathbb{E}\big[ \operatorname{TV}^{(\sigma_N)}_1(\mu, \mu^N)\big]\big| \ge \varepsilon\right) < \infty.
$$
Therefore, $$\mathbb{P}\left(\limsup_{N \to \infty} \big|\operatorname{TV}^{(\sigma_N)}_1(\mu, \mu^N) - \mathbb{E}\big[ \operatorname{TV}^{(\sigma_N)}_1(\mu, \mu^N)\big]\big| \leq \varepsilon\right) = 1$$ by the Borel-Cantelli Lemma. The statement of the theorem then follows by letting $\varepsilon \to 0$.
\end{proof}

\begin{lemma}[Consistency of smoothed weighted total variation]\label{lem:TV_sigma_1_consistency}
Let $\mu \in \mathcal{P}((\mathbb{R}^d)^T)$ be compactly supported, and let $\slices{X} = (X^n)_{n = 1}^N \sim \mu$. Assume that $\sum_{s = 1}^\infty \bar \eta_{\slices X}(s) < \infty$. Let $\sigma_N = \max(\sqrt{\Delta_N}, N^{-\frac{1}{8}})$. Then
$$
\operatorname{TV}^{(\sigma_N)}_1(\mu, \mu^N) \to 0, \qquad N \to \infty,
$$
$\mathbb{P}$-almost surely.
\end{lemma}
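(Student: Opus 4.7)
The plan is to write
\[
\operatorname{TV}^{(\sigma_N)}_1(\mu, \mu^N) = \mathbb{E}\!\left[\operatorname{TV}^{(\sigma_N)}_1(\mu, \mu^N)\right] + \left(\operatorname{TV}^{(\sigma_N)}_1(\mu, \mu^N) - \mathbb{E}\!\left[\operatorname{TV}^{(\sigma_N)}_1(\mu, \mu^N)\right]\right)
\]
and show that each of the two terms on the right-hand side tends to $0$ almost surely. For the second term this is immediate: by construction $\sigma_N \geq N^{-1/8}$ and $\sigma_N \leq 1$ (since $\Delta_N \leq 1$ and $N^{-1/8}\leq 1$), so Lemma \ref{lem:TV_sigma_1_consistency_deviation} applies directly and yields almost sure convergence of the fluctuation to $0$.

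For the expectation term, I invoke Lemma \ref{lem:weighted_total_variation_empirical}. Since $\eta_{\slices X}(s)\le \bar\eta_{\slices X}(s)$, the assumption $\sum_{s=1}^\infty \bar\eta_{\slices X}(s)<\infty$ implies $\sum_{s=1}^\infty \eta_{\slices X}(s)<\infty$, so the mixing prefactor $\sqrt{1+2\sum_{s=1}^{N-1}\eta_{\slices X}(s)}$ is bounded by an absolute constant. Hence
\[
\mathbb{E}\!\left[\operatorname{TV}^{(\sigma_N)}_1(\mu, \mu^N)\right] \leq C \cdot \sigma_N^{-dT/2} \cdot N^{-1/2},
\]
where $C>0$ depends on $d, T, \operatorname{diam}(\operatorname{spt}(\mu))$.

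It remains to verify that the rate on the right side decays. Using $\sigma_N \geq \sqrt{\Delta_N} = N^{-r/2}$ (from the max in the definition of $\sigma_N$), we get $\sigma_N^{-dT/2}\leq N^{rdT/4}$. With $r$ defined in \eqref{eq:def_r}: for $d=1$ we have $rdT/4 = T/(4(T+1)) < 1/4$, and for $d\ge 2$ we have $rdT/4 = 1/4$. In either case
\[
\sigma_N^{-dT/2}\cdot N^{-1/2} \leq N^{rdT/4 - 1/2} \leq N^{-1/4}\xrightarrow{N\to\infty} 0,
\]
so $\mathbb{E}[\operatorname{TV}^{(\sigma_N)}_1(\mu,\mu^N)]\to 0$. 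Combining with the a.s.~vanishing of the deviation from the mean gives the claim. There is no real obstacle in this argument beyond bookkeeping: the hard analytic work has already been done in Lemmas \ref{lem:weighted_total_variation_empirical} and \ref{lem:TV_sigma_1_consistency_deviation}, and the role of the choice $\sigma_N = \max(\sqrt{\Delta_N}, N^{-1/8})$ is precisely to balance the $\sigma^{-dT/2}N^{-1/2}$ bias bound against the $\sigma^{4}N$ concentration rate, so that both terms converge.
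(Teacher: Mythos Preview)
Your proof is correct and follows essentially the same approach as the paper: split into expectation plus fluctuation, control the expectation via Lemma~\ref{lem:weighted_total_variation_empirical} together with $\sigma_N\ge \sqrt{\Delta_N}=N^{-r/2}$ to get the $N^{-1/4}$ decay, and control the fluctuation via Lemma~\ref{lem:TV_sigma_1_consistency_deviation} using $\sigma_N\ge N^{-1/8}$. The only cosmetic difference is that you make the inequality $\eta_{\slices X}(s)\le \bar\eta_{\slices X}(s)$ explicit, whereas the paper invokes it implicitly.
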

\begin{proof}
Applying Lemma \ref{lem:weighted_total_variation_empirical}, we obtain
\begin{align}\label{eqn:AW_consistency_compact_3_prefinal_expectation}
\begin{split}
\mathbb{E}\big[ \operatorname{TV}^{(\sigma_N)}_1(\mu, \mu^N)\big] &\leq C \cdot \sqrt{1 + 2\sum_{s = 1}^{N-1} \eta_{\slices{X}}(s)} \cdot \sigma_N^{-\frac{dT}{2}} \cdot N^{-\frac{1}{2}} \\
&\leq C \cdot \sigma_N^{-\frac{dT}{2}} \cdot N^{-\frac{1}{2}} \leq C \cdot N^{-\frac{1}{4}} \to 0, \qquad N \to \infty,
\end{split}
\end{align}
where $C > 0$ depends on $d, T, \operatorname{diam}(\operatorname{spt}(\mu))$; the second inequality follows from $\sum_{s = 1}^\infty \bar \eta_{\slices X}(s) < \infty$, and the third inequality holds as $\sigma_N \geq N^{-\frac{r}{2}}$. Moreover, by Lemma \ref{lem:TV_sigma_1_consistency_deviation} we have
\begin{equation}\label{eqn:AW_consistency_compact_3_prefinal}
\operatorname{TV}^{(\sigma_N)}_1(\mu, \mu^N) - \mathbb{E}\big[ \operatorname{TV}^{(\sigma_N)}_1(\mu, \mu^N)\big] \to 0, \qquad N \to \infty,
\end{equation}
$\mathbb{P}$-almost surely. Thus, the claim follows from \eqref{eqn:AW_consistency_compact_3_prefinal_expectation} and \eqref{eqn:AW_consistency_compact_3_prefinal}.
\end{proof}

\subsection{Non-continuous kernels, compact case}

We are now in a position to present the proof of consistency when the kernels of $\mu$ are not assumed to admit $\mathcal{W}$-continuous version. To address this issue we apply smoothing with a Gaussian kernel $\mathcal{N}_{\sigma_N}$, where $\sigma_N \to 0$ is appropriately scaled with respect to the sample size and the mixing properties of $\slices X$. 

\begin{proof}[Proof of Theorem \ref{thm:consistency_general}, (2)]
By \eqref{eqn:AW_consistency_triangle_inequality} and \eqref{eqn:AW_leq_TV1} we have
\begin{equation}\label{eqn:AW_consistency_compact_decomposition}
\mathcal{AW}(\mu, \widehat \mu^N \ast \mathcal{N}_{\sigma_N}) \leq \underbrace{\mathcal{AW}(\mu, \mu \ast \mathcal{N}_{\sigma_N})}_{(1)} + C \cdot \underbrace{\operatorname{TV}^{(\sigma_N)}_1(\mu, \mu^N)}_{(2)} + C \cdot \underbrace{\operatorname{TV}^{(\sigma_N)}_1(\mu^N, \widehat \mu^N)}_{(3)}
\end{equation}
where $C > 0$ depends on $T, \operatorname{diam}(\operatorname{spt}(\mu))$. We now estimate three terms on the right-hand side of \eqref{eqn:AW_consistency_compact_decomposition} separately:
\begin{enumerate}
    \item According \cite[Theorem 5.5]{hou2024convergence}, we have
    \begin{equation}\label{eqn:AW_consistency_compact_1}
    \mathcal{AW}(\mu, \mu \ast \mathcal{N}_{\sigma_N}) \to 0, \;\; N \to +\infty,    
    \end{equation}
    since $\sigma_N \to 0$.
    \item Lemma \ref{lem:TV_sigma_1_consistency} yields
    \begin{equation}\label{eqn:AW_consistency_compact_2}
    \operatorname{TV}^{(\sigma_N)}_1(\mu, \mu^N) \to 0, \;\; N \to +\infty,
    \end{equation}
    $\mathbb{P}$-almost surely.
    \item Observe that $\widehat \mu^N = (\varphi^N)_\# \mu^N$. Applying Proposition \ref{prop:weighted_total_variation_bound} with $\mu = \mu^N$ and using $\sigma_N \geq \sqrt{\Delta_N}$, we get
    \begin{equation}\label{eqn:AW_consistency_compact_3}
    \operatorname{TV}^{(\sigma_N)}_1(\mu^N, \widehat \mu^N) \leq C \cdot (\sigma_N^{-1} \cdot \Delta_N + \sigma_N^{-2} \cdot \Delta_N^2) \leq C \cdot \sqrt{\Delta_N} \to 0, \;\; N \to +\infty,
    \end{equation}
    $\mathbb{P}$-almost surely, where $C > 0$ depends on $d, T, \operatorname{diam}(\operatorname{spt}(\mu))$.
\end{enumerate}
The proof now follows by taking $N \to +\infty$ in \eqref{eqn:AW_consistency_compact_decomposition} and using \eqref{eqn:AW_consistency_compact_1}, \eqref{eqn:AW_consistency_compact_2}, \eqref{eqn:AW_consistency_compact_3}.
\end{proof}

\subsection{Non-compact case}

Concluding the section, we present the proof of consistency for non-compact measures.

\begin{proof}[Proof of Theorem \ref{thm:consistency_general_noncompact}, (1)]
Fix $\varepsilon > 0$ and take $R \geq \sqrt{dT} \Delta_N$. Recall the mapping $\kappa_R$ from Lemma \ref{lem:compact_approximation}. By the triangle inequality for $\mathcal{AW}$, as in \eqref{eq:explain_tau},
\begin{equation}\label{eqn:AW_cons_nonc_cont_dec}
\mathcal{AW}(\mu, \widehat \mu^N) \leq \mathcal{AW}(\mu_R, \widehat{\mu_R}^N) + \tau^N_R,
\end{equation}
where $\mu_R = (\kappa_R)_\# \mu$ and $\tau^N_R$ is defined in \eqref{eq:explain_tau}. According to Lemma \ref{lem:tau_estimate},
\begin{equation}\label{eqn:AW_cons_nonc_cont_1}
\limsup_{N \to \infty} \tau^N_R \leq \frac{C}{R^{p-1}}    
\end{equation}
$\mathbb{P}$-almost surely, where $C > 0$ depends on $T, p, \int \|x\|^p \, d\mu$. Moreover, since $\kappa_R$ is continuous and injective, and $\mu$ has continuous kernels, $\mu_R$ has continuous kernels too. Furthermore, Proposition \ref{prop:eta_mixing_information_processing} ensures $\sum_{s = 1}^\infty \bar \eta_{\kappa_R(\slices X)}(s) < \infty$. Thus, according to Theorem \ref{thm:consistency_general},
\begin{equation}\label{eqn:AW_cons_nonc_cont_2}
\mathcal{AW}(\mu_R, \widehat{\mu_R}^N) \to 0, \quad N \to \infty,
\end{equation}
$\mathbb{P}$-almost surely. Taking $\limsup_{N \to \infty}$ in \eqref{eqn:AW_cons_nonc_cont_dec} and using \eqref{eqn:AW_cons_nonc_cont_1}, \eqref{eqn:AW_cons_nonc_cont_2}, we obtain
$$
\mathbb{P}\!\left(\limsup_{N \to \infty} \mathcal{AW}(\mu, \widehat \mu^N) \leq \tfrac{C}{R^{p-1}}\right) = 1,
$$
where $C > 0$ depends on $T, p$. The claim now follows by letting $R \to \infty$.
\end{proof}

\begin{proof}[Proof of Theorem \ref{thm:consistency_general_noncompact}, (2)]
Fix $\varepsilon > 0$ and take $R \geq \sqrt{dT} \Delta_N$. Recall the mapping $\kappa_R$ from Lemma \ref{lem:compact_approximation}. By the triangle inequality for $\mathcal{AW}$,
\begin{align}\label{eqn:AW_consistency_noncompact_triangle_inequality}
\begin{split}
&\mathcal{AW}(\mu, \widehat \mu^N \ast \mathcal{N}_{\sigma_N})\\
&\quad \leq \mathcal{AW}(\mu_R, \widehat {\mu_R}^N \ast \mathcal{N}_{\sigma_N}) + \mathcal{AW}(\mu, \mu_R) + \mathcal{AW}^{(\sigma_N)}(\widehat{\mu_R}^N, (\widehat \mu^N)_R) + \mathcal{AW}^{(\sigma_N)}((\widehat \mu^N)_R, \widehat \mu^N)\\
&\quad \leq \mathcal{AW}(\mu_R, \widehat {\mu_R}^N \ast \mathcal{N}_{\sigma_N}) + \tau^N_R,
\end{split}
\end{align}
where we have used $\mathcal{AW}^{(\sigma)} \leq \mathcal{AW}$ for the second inequality, and $\tau^N_R$ is defined in \eqref{eq:explain_tau}. According to Lemma \ref{lem:tau_estimate},
\begin{equation}\label{eqn:AW_cons_nonc_sm_1}
\limsup_{N \to \infty} \tau^N_R \leq \frac{C}{R^{p-1}}    
\end{equation}
$\mathbb{P}$-almost surely, where $C > 0$ depends on $T, p, \int \|x\|^p \, d\mu$. Moreover,
\begin{equation}\label{eqn:AW_cons_nonc_sm_2}
\mathcal{AW}(\mu_R, \widehat{\mu_R}^N \ast \mathcal{N}_{\sigma_N}) \to 0, \quad N \to \infty,
\end{equation}
$\mathbb{P}$-almost surely by Theorem \ref{thm:consistency_general} and Proposition \ref{prop:eta_mixing_information_processing}, which ensures $\sum_{s = 1}^\infty \bar \eta_{\kappa_R(\slices X)} < \infty$. Thus, taking $\limsup_{N \to \infty}$ in \eqref{eqn:AW_consistency_noncompact_triangle_inequality} and using \eqref{eqn:AW_cons_nonc_sm_1} and \eqref{eqn:AW_cons_nonc_sm_2}, we obtain
$$
\mathbb{P}\left(\limsup_{N \to \infty} \mathcal{AW}(\mu, \widehat \mu^N \ast \mathcal{N}_{\sigma_N}) \leq \tfrac{C}{R^{p-1}}\right) = 1,
$$
where $C > 0$ depends on $T, p$. The claim then follows by letting $R \to \infty$.
\end{proof}

\section{Auxiliary results and remaining proofs}\label{sec:remaining}

In this section we collect auxiliary results we have used throughout the paper, as well as all remaining proofs.

\begin{example}\label{ex:difference}
Let $N = 3$ and $\bold Z = (Z_n)_{n = 1}^3$ be a collection of random elements on a finite state space $\mathcal{Z} = \{0, 1\}$ with discrete $\sigma$-algebra. For $q \in (0, 1)$ define the joint law of $(Z_1, Z_2, Z_3)$ via
\begin{equation}\label{eqn:example_difference}
Z_1 \sim \operatorname{Ber}(q), \quad \operatorname{Law}(Z_{2:3} \mid Z_1) = \begin{cases}
    \delta_{0} \otimes \operatorname{Ber}(1 - q), &Z_1 = 0,\\
    \operatorname{Ber}(q) \otimes \operatorname{Ber}(q \cdot Z_2 + \tfrac{1}{2} \cdot (1 - Z_2)), &Z_1 = 1.
\end{cases}
\end{equation}
The value of $q$ will be chosen later. To estimate $\eta_{\bold Z}(1)$, we choose $n = 2$ and $A_{1:2} = \mathcal{Z} \times \{1\}$ in the definition of $\eta_{\bold Z}$ and write
\begin{equation}\label{eqn:example_difference_1}
\operatorname{Law}(Z_3 \mid Z_{1:2} \in A_{1:2}) = \operatorname{Law}(Z_3 \mid Z_2 = 1) = \operatorname{Law}(Z_3 \mid Z_1 = 1, \; Z_2 = 1) = \operatorname{Ber}(q),
\end{equation}
where we have used that $Z_2 = 1$ implies $Z_1 = 1$ according to \eqref{eqn:example_difference}. Next,
\begin{align}\label{eqn:example_difference_2}
\begin{split}
\operatorname{Law}(Z_3 \mid Z_1 \in A_1) &= \operatorname{Law}(Z_3) = \mathbb{P}(Z_1 = 1) \cdot \operatorname{Law}(Z_3 \mid Z_1 = 1) + \mathbb{P}(Z_1 = 0) \cdot \operatorname{Law}(Z_3 \mid Z_1 = 0)\\
&\overset{\eqref{eqn:example_difference}}{=} q \cdot (q \cdot \operatorname{Ber}(q) + (1 - q) \cdot \operatorname{Ber}(\tfrac{1}{2})) + (1 - q) \cdot \operatorname{Ber}(1 - q)\\
&= \operatorname{Ber}\!\left(q^3 + \tfrac{1}{2} q\cdot (1-q) + (1-q)^2\right),
\end{split}
\end{align}
where we interpret ``$+$" as a mixture of distributions. Thus, using $\operatorname{TV}(\operatorname{Ber}(p), \operatorname{Ber}(q)) = |p - q|$,
\begin{equation}\label{eqn:example_difference_bar_eta_estimate}
\eta_{\bold Z}(1) \geq \operatorname{TV}\!\left(\operatorname{Law}(Z_3 \mid Z_{1:2} \in A_{1:2}), \operatorname{Law}(Z_3 \mid Z_{1} \in A_{1})\right) \overset{\substack{\eqref{eqn:example_difference_1}\\\eqref{eqn:example_difference_2}}}{\geq} |q^3 + \tfrac{1}{2} q\cdot (1-q) + (1-q)^2 - q|.
\end{equation}
To compute $\widehat \eta_{\bold Z}(2, 2+1)$, observe that only $Z_1 = 1$ has both $Z_2 = 0$ and $Z_2 = 1$ with positive probability. Hence,
\begin{equation}\label{eqn:example_difference_eta_221}
\widehat \eta_{\bold Z}(2, 2+1) = \operatorname{TV}\!\left(\operatorname{Law}(Z_3 \mid Z_1 = 1, \; Z_2 = 0), \operatorname{Law}(Z_3 \mid Z_1 = 1, \; Z_2 = 1)\right) \overset{\eqref{eqn:example_difference}}{=} |\tfrac{1}{2} - q|.
\end{equation}
For $\widehat \eta_{\bold Z}(1, 1+1)$, we recall conditional laws $\operatorname{Law}(Z_{2:3} \mid Z_1)$ from \eqref{eqn:example_difference} and write:
\begin{equation}\label{eqn:example_difference_eta_111}
\begin{split}
&\operatorname{TV}\!\left(\operatorname{Law}(Z_{2:3} \mid Z_1 = 0), \operatorname{Law}(Z_{2:3} \mid Z_1 = 1)\right)\\
&\overset{\eqref{eqn:example_difference}}{=} \operatorname{TV}\!\left(\delta_0 \otimes \operatorname{Ber}(1 - q), \operatorname{Ber}(q) \otimes \operatorname{Ber}(q \cdot Z_2 + \tfrac{1}{2} \cdot (1 - Z_2))\right)\\
&= \operatorname{TV}\!\left(\delta_0 \otimes \operatorname{Ber}(1 - q), (1 - q) \cdot \delta_{0} \otimes \operatorname{Ber}(\tfrac{1}{2}) + q \cdot \delta_1 \otimes \operatorname{Ber}(q) \right)\\
&= \frac{1}{2} \cdot \left(|q - \tfrac{1}{2} (1 - q)| + |1 - q - \tfrac{1}{2} (1 - q)| + q\right) = \tfrac{1}{4} |3q - 1| + \tfrac{1}{4} (1 + q).
\end{split}
\end{equation}
Thus, taking $q = \frac{1}{10}$,
\begin{align*}
&\eta_{\bold Z}(1) \overset{\eqref{eqn:example_difference_bar_eta_estimate}}{\geq} |q^3 + \tfrac{1}{2} q\cdot (1-q) + (1-q)^2 - q| = \tfrac{756}{1000}, \\
&\sup_{n \in \{1, 2\}} \widehat \eta_{\bold Z}(n, n+1) \overset{\substack{\eqref{eqn:example_difference_eta_221}\\\eqref{eqn:example_difference_eta_111}}}{=} \max( |\tfrac{1}{2} - q|, \tfrac{1}{4} |3q - 1| + \tfrac{1}{4} (1 + q)) = \tfrac{9}{20},
\end{align*}
hence $\eta_{\bold Z}(1) > \sup_{n \in \{1, 2\}} \widehat \eta_{\bold Z}(n, n+1)$.
\end{example}

\begin{proposition}\label{prop:eta_mixing_variance}
Fix $N \in \mathbb{N}$ let $\bold Z = (Z_n)_{n = 1}^N$ be a collection of identically distributed random elements taking values in $\R^k$. Let $f: \R^k \to [0, +\infty)$ be a bounded Borel measurable function. Then
\begin{equation}\label{eqn:eta_mixing_variance}
\operatorname{Var}\left[\frac{1}{N} \sum_{n = 1}^N f(Z_n)\right] \leq \frac{1}{N} \cdot \|f\|_\infty \cdot \mathbb{E} \left[f(Z_1)\right] \cdot \left(1 + 2 \sum_{s = 1}^{N-1} \eta_{\bold Z}(s)\right).
\end{equation}
\end{proposition}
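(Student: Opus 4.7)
The plan is to expand the variance into a sum of covariances and then bound the diagonal and off-diagonal contributions separately, using the covariance bound from Proposition \ref{prop:eta_mixing_covariance} for the off-diagonal terms. Concretely, I would begin with the bilinearity identity
\begin{align*}
\operatorname{Var}\Bigl[\tfrac{1}{N} \sum_{n = 1}^N f(Z_n)\Bigr] = \frac{1}{N^2} \sum_{i=1}^N \operatorname{Var}[f(Z_i)] + \frac{2}{N^2} \sum_{1 \le j < i \le N} \operatorname{Cov}(f(Z_i), f(Z_j)).
\end{align*}

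For the diagonal terms, since $f \ge 0$ and the $Z_i$ are identically distributed, I would use
$\operatorname{Var}[f(Z_i)] \le \mathbb{E}[f(Z_i)^2] \le \|f\|_\infty \cdot \mathbb{E}[f(Z_1)]$,
which immediately gives $\frac{1}{N^2}\sum_{i=1}^N \operatorname{Var}[f(Z_i)] \le \frac{\|f\|_\infty \mathbb{E}[f(Z_1)]}{N}$. For the off-diagonal terms, I invoke Proposition \ref{prop:eta_mixing_covariance} to obtain
$\operatorname{Cov}(f(Z_i), f(Z_j)) \le \eta_{\bold Z}(i-j) \cdot \|f\|_\infty \cdot \mathbb{E}[f(Z_j)] = \eta_{\bold Z}(i-j) \cdot \|f\|_\infty \cdot \mathbb{E}[f(Z_1)]$ for $1 \le j < i \le N$.

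It then remains to collect the bound by counting pairs with a given lag. Reindexing via $s = i - j$, I would use
\begin{align*}
\sum_{1 \le j < i \le N} \eta_{\bold Z}(i-j) = \sum_{s=1}^{N-1} (N-s)\, \eta_{\bold Z}(s) \le N \sum_{s=1}^{N-1} \eta_{\bold Z}(s),
\end{align*}
and combine with the diagonal bound to conclude
$\operatorname{Var}[\tfrac{1}{N}\sum_{n=1}^N f(Z_n)] \le \tfrac{\|f\|_\infty \mathbb{E}[f(Z_1)]}{N}(1 + 2\sum_{s=1}^{N-1} \eta_{\bold Z}(s))$, which is exactly \eqref{eqn:eta_mixing_variance}.

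There is no real obstacle here once Proposition \ref{prop:eta_mixing_covariance} is in hand; the proof is a standard second-moment computation for weakly dependent sequences. The only small point to keep in mind is that the diagonal bound $\operatorname{Var}[f(Z_i)] \le \|f\|_\infty \mathbb{E}[f(Z_1)]$ crucially relies on $f$ being non-negative and bounded, which is assumed, so that $\mathbb{E}[f(Z_i)^2]$ can be controlled by $\|f\|_\infty \mathbb{E}[f(Z_i)]$ rather than by the square of the mean.
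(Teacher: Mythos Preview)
Your proposal is correct and follows essentially the same argument as the paper's proof: expand the variance into diagonal and off-diagonal covariances, bound the diagonal terms by $\|f\|_\infty\,\mathbb{E}[f(Z_1)]$ using non-negativity of $f$, apply Proposition~\ref{prop:eta_mixing_covariance} to the off-diagonal terms, and count pairs by lag. The only cosmetic difference is that the paper sums over all $i\neq j$ rather than splitting into $j<i$ with a factor of $2$, but the arithmetic is identical.
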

\begin{proof}
According to Proposition \ref{prop:eta_mixing_covariance}, for any $i \neq j$ we have
\begin{equation}\label{eqn:covariance_bound_Z}
\operatorname{Cov}\left(f(Z_i), f(Z_j)\right) \leq \eta_{\bold Z}(|i - j|) \cdot \|f\|_\infty \cdot \mathbb{E}\left[f(Z_i)\right].
\end{equation}
Thus,
\begin{align*}
\operatorname{Var}\left[\frac{1}{N} \sum_{n = 1}^N f(Z_n)\right] &= \frac{1}{N} \cdot \operatorname{Var}\left[f(Z_1)\right] + \frac{1}{N^2} \cdot \sum_{i \neq j} \operatorname{Cov}\left(f(Z_i), f(Z_j)\right)\\
&\overset{\eqref{eqn:covariance_bound_Z}}{\leq} \frac{1}{N} \cdot \operatorname{Var}\left[f(Z_1)\right] + \frac{1}{N^2} \cdot \|f\|_\infty \cdot \mathbb{E}\left[f(Z_1)\right] \cdot \sum_{i \neq j} \eta_{\bold Z}(|i - j|)\\
&\leq \frac{1}{N} \cdot \left[\operatorname{Var}\left[f(Z_1)\right] + 2 \sum_{s = 1}^{N-1} \eta_{\bold Z}(s) \cdot \|f\|_\infty \cdot \mathbb{E} \left[f(Z_1)\right]\right]\\
&\leq \frac{1}{N} \cdot \|f\|_\infty \cdot \mathbb{E} \left[f(Z_1)\right] \cdot \left(1 + 2 \sum_{s = 1}^{N-1} \eta_{\bold Z}(s)\right).
\end{align*}
where the second inequality follows from $\sum_{i \neq j} \eta_{\bold Z}(|i - j|) \leq 2N \cdot \sum_{s = 1}^{N-1} \eta_{\bold Z}(s)$, and the final inequality holds as $\operatorname{Var}\left[f(Z_1)\right] \leq \mathbb{E}\left[f(Z_1)^2\right] \leq \|f\|_\infty \cdot \mathbb{E}\left[f(Z_1)\right]$.
\end{proof}

\begin{proposition}\label{prop:borel_sets_approximation}
Let $\mu, \nu \in \mathcal{P}(\R^k)$, and denote by $\mathcal{A} \subseteq \mathcal{B}(\R^k)$ the collection of finite unions of open rectangles with rational endpoints. Then for any $S \in \mathcal{B}(\R^k)$ and any $\varepsilon > 0$ there exists $U \in \mathcal{A}$, such that
\begin{equation}\label{eqn:borel_set_approximation}
\mu(U \Delta S) \vee \nu(U \Delta S) < \varepsilon,
\end{equation}
where $A \Delta B := A \setminus B \cup B \setminus A$ for $A, B \in \mathcal{B}(\R^k)$ is the symmetric difference of $A$ and $B$.
\end{proposition}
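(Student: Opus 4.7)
The plan is to reduce the two-measure statement to a single-measure approximation problem and then invoke outer regularity of finite Borel measures on the Polish space $\R^k$. Set $\rho := \mu + \nu$, a finite Borel measure on $\R^k$. Since $\mu(E) \vee \nu(E) \leq \rho(E)$ for every Borel set $E$, it suffices to produce $U \in \mathcal{A}$ with $\rho(U \Delta S) < \varepsilon$, and then \eqref{eqn:borel_set_approximation} follows immediately.

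Next, I would use outer regularity: for the finite Borel measure $\rho$ on the Polish space $\R^k$, there exists an open set $O \supseteq S$ with $\rho(O \setminus S) < \varepsilon/2$. This is the only measure-theoretic input; it follows from the standard fact that the collection of Borel sets that are both inner and outer regular forms a $\sigma$-algebra containing the open sets.

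Since open rectangles with rational endpoints form a countable base of the topology on $\R^k$, any open set $O$ can be written as $O = \bigcup_{i=1}^\infty R_i$ with $R_i$ open rectangles with rational endpoints. By continuity of $\rho$ from below, choose $N \in \mathbb{N}$ large enough that
$$
\rho\!\left(O \setminus \bigcup_{i=1}^N R_i\right) < \varepsilon/2,
$$
and set $U := \bigcup_{i=1}^N R_i \in \mathcal{A}$.

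Finally, since $U \subseteq O$ and $S \subseteq O$, we have $U \setminus S \subseteq O \setminus S$ and $S \setminus U \subseteq O \setminus U$, so
$$
\rho(U \Delta S) = \rho(U \setminus S) + \rho(S \setminus U) \leq \rho(O \setminus S) + \rho(O \setminus U) < \varepsilon,
$$
which yields the conclusion. There is no real obstacle here: the argument is a routine application of outer regularity combined with the countability of the rational-rectangle base; both of these ingredients are completely standard for Borel measures on $\R^k$.
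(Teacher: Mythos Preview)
Your proof is correct and follows essentially the same approach as the paper: outer regularity to pass from $S$ to an open set $O$, then approximation of $O$ from below by finite unions of rational rectangles via continuity of measure. The only (cosmetic) difference is that you combine the two measures into $\rho = \mu + \nu$ at the outset, which slightly streamlines the argument since the paper instead carries both $\mu$ and $\nu$ through each step and takes the maximum.
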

\begin{proof}
Fix $\varepsilon > 0$. Since $\mu$ and $\nu$ are Borel probability measures, they are outer regular. Hence, there exists an open set $O \supset S$, such that
\begin{equation}\label{eqn:borel_set_approximation_open}
\mu(O \setminus S) \vee \nu(O \setminus S) = \mu(O \Delta S) \vee \nu(O \Delta S) < \frac{\varepsilon}{2}.
\end{equation}

Next, observe that $O = \cup_{n = 1}^\infty A_n$ for some sequence $\{A_n\}_{n = 1}^\infty \subseteq \mathcal{A}$, which satisfies $A_n \subseteq A_{n+1}$ for $n \in \mathbb{N}$, because $O$ is a countable union of open rectangles with rational endpoints. By monotone convergence, $\mu(A_n) \uparrow \mu(O)$ and $\nu(A_n) \uparrow \nu(O)$, as $n \to \infty$. In particular, choose $N = N(\varepsilon)$, such that
\begin{equation}\label{eqn:borel_set_approximation_union}
\mu(O \setminus A_N) \vee \nu(O \setminus A_N) = \mu(A_N \Delta O) \vee \nu(A_N \Delta O) < \frac{\varepsilon}{2}.
\end{equation}
Then by $A_N\Delta S\subset (A_N\Delta O)\cup (O\Delta S)$ and $(a + b) \vee (c + d) \leq (a \vee c) + (b \vee d)$ we have
\begin{align*}
\mu(A_N \Delta S) \vee \nu(A_N \Delta S) &\leq
[\mu(A_N \Delta O) +\mu(O\Delta S)] \vee [\nu(A_N \Delta O)+\nu(O \Delta S)]\\
&\le \mu(A_N \Delta O) \vee \nu(A_N \Delta O) + \mu(O \Delta S) \vee \nu(O \Delta S) \overset{\substack{\eqref{eqn:borel_set_approximation_open}\\ \eqref{eqn:borel_set_approximation_union}}}{<} \varepsilon.
\end{align*}
Therefore, \eqref{eqn:borel_set_approximation} holds with $U := A_N$.
\end{proof}

\begin{proposition}\label{prop:TV_countable_approx}
Let $\mu, \nu \in \mathcal{P}(\R^k)$. Then
\begin{equation}
\operatorname{TV}(\mu, \nu) = \sup_{S \in \mathcal{A}} |\mu(S) - \nu(S)|,
\end{equation}
where $\mathcal{A}$ consists of finite unions of open rectangles with rational endpoints.
\end{proposition}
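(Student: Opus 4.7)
The proof will rest on two facts: the standard identity $\operatorname{TV}(\mu,\nu) = \sup_{S \in \mathcal{B}(\R^k)} |\mu(S) - \nu(S)|$, and the approximation result of Proposition \ref{prop:borel_sets_approximation}. One inequality is immediate: since $\mathcal{A} \subseteq \mathcal{B}(\R^k)$, we trivially have $\sup_{S \in \mathcal{A}} |\mu(S) - \nu(S)| \leq \operatorname{TV}(\mu,\nu)$.

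For the reverse inequality, the plan is to fix an arbitrary $S \in \mathcal{B}(\R^k)$ and $\varepsilon > 0$, invoke Proposition \ref{prop:borel_sets_approximation} to produce a set $U \in \mathcal{A}$ such that $\mu(U \Delta S) \vee \nu(U \Delta S) < \varepsilon$, and then estimate
\begin{align*}
|\mu(S) - \nu(S)| &\leq |\mu(U) - \nu(U)| + |\mu(S) - \mu(U)| + |\nu(S) - \nu(U)| \\
&\leq |\mu(U) - \nu(U)| + \mu(S \Delta U) + \nu(S \Delta U) \\
&\leq \sup_{U' \in \mathcal{A}} |\mu(U') - \nu(U')| + 2\varepsilon.
\end{align*}
Taking the supremum over $S \in \mathcal{B}(\R^k)$ yields $\operatorname{TV}(\mu,\nu) \leq \sup_{U' \in \mathcal{A}} |\mu(U') - \nu(U')| + 2\varepsilon$, and letting $\varepsilon \to 0$ finishes the argument.

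There is essentially no obstacle here: the entire content has been absorbed into Proposition \ref{prop:borel_sets_approximation}, which handles the outer regularity and the countable-union-of-rational-rectangles approximation. The proof proper is just a short three-line triangle-inequality computation together with passage to the supremum.
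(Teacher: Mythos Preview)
Your proof is correct and is essentially the same as the paper's, just with the triangle-inequality computation spelled out explicitly where the paper merely says ``follows from Proposition \ref{prop:borel_sets_approximation}.''
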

\begin{proof}
The ``$\geq$" part is trivial, as $\mathcal{A} \subseteq \mathcal{B}(\R^k)$. The ``$\leq$" part follows from Proposition \ref{prop:borel_sets_approximation}.
\end{proof}

\begin{proposition}\label{prop:TV_bound_mixing}
Fix $N \in \mathbb{N}$ and let $\bold Z = (Z_n)_{n = 1}^N$ be a sequence taking values in $\R^k$. Let $1 \leq s < N$ and $1 \leq n \le N -s $. Then $\P(Z_{1:n}\in dz_{1:n})$-almost everywhere
\begin{equation}\label{eqn:TV_bound_mixing_1}
\operatorname{TV}(\operatorname{Law}(Z_{n+s} \mid Z_{1:n} = z_{1:n}), \operatorname{Law}(Z_{n+s} \mid Z_{1:n-1} = z_{1:n-1})) \leq \eta_{\bold Z}(s).
\end{equation}
Similarly,
\begin{equation}\label{eqn:TV_bound_mixing_2}
\operatorname{TV}(\operatorname{Law}(Z_{n+s:N} \mid Z_{1:n} = z_{1:n}), \operatorname{Law}(Z_{n+s:N} \mid Z_{1:n-1} = z_{1:n-1})) \leq \bar \eta_{\bold Z}(s).
\end{equation}
\end{proposition}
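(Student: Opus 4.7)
The main difficulty is that the mixing coefficients $\eta_{\bold Z}(s)$ and $\bar\eta_{\bold Z}(s)$ are defined by conditioning on product-Borel events $\{Z_{1:n}\in A_{1:n}\}$ of strictly positive probability, whereas the statement requires the same bound after conditioning on the point events $\{Z_{1:n}=z_{1:n}\}$, which generically have probability zero. My plan is to bridge this gap by a dyadic-cube approximation of the point event combined with L\'evy's upward martingale convergence theorem; this is where the bulk of the argument will lie.

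First, I will fix regular versions of the disintegrations $g_B(z_{1:n}) := \P(Z_{n+s}\in B\mid Z_{1:n}=z_{1:n})$ and $h_B(z_{1:n-1}) := \P(Z_{n+s}\in B\mid Z_{1:n-1}=z_{1:n-1})$ for each Borel $B\subseteq\R^k$; these exist since the underlying spaces are Polish. Applying Proposition~\ref{prop:TV_countable_approx}, the left-hand side of \eqref{eqn:TV_bound_mixing_1} can be written as $\sup_{B\in\mathcal{A}}|g_B(z_{1:n})-h_B(z_{1:n-1})|$, where $\mathcal{A}$ is the countable collection of finite unions of open rectangles with rational endpoints in $\R^k$. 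Since $\mathcal{A}$ is countable, it will suffice to establish $|g_B(z_{1:n})-h_B(z_{1:n-1})|\leq\eta_{\bold Z}(s)$ on a set of full $\P(Z_{1:n}\in dz_{1:n})$-measure for each fixed $B\in\mathcal{A}$, and then intersect the exceptional null sets.

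For the approximation, I will introduce dyadic product cubes $D_m(z_{1:n}) := D_m(z_1)\times\cdots\times D_m(z_n)$, where $D_m(z_j)$ denotes the half-open cube of side $2^{-m}$ in $\R^k$ containing $z_j$. Each $D_m(z_{1:n})$ is a product of Borel sets as required by the definition of $\eta_{\bold Z}$, and the event $\{Z_{1:n}\in D_m(Z_{1:n}(\omega))\}$ has positive probability for every $\omega$, since it contains $\omega$. The definition of $\eta_{\bold Z}(s)$ will therefore yield, $\P$-a.s.\ and for every $m\geq 1$,
$$
\left|\,\P(Z_{n+s}\in B\mid Z_{1:n}\in D_m(Z_{1:n})) - \P(Z_{n+s}\in B\mid Z_{1:n-1}\in D_m(Z_{1:n-1}))\,\right| \leq \eta_{\bold Z}(s).
$$

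To conclude I will let $m\to\infty$. The $\sigma$-algebras $\sigma(D_m(Z_{1:n}))$ form an increasing filtration whose upward limit is $\sigma(Z_{1:n})$, since dyadic product cubes generate the Borel $\sigma$-algebra on $(\R^k)^n$. L\'evy's upward theorem applied to $\mathds{1}_{\{Z_{n+s}\in B\}}$ will give $\P$-a.s.\ convergence of the first term above to $g_B(Z_{1:n})$, and an identical argument handles the second term with limit $h_B(Z_{1:n-1})$. Passing to the limit, then intersecting the countably many exceptional null sets over $B\in\mathcal{A}$, yields the bound simultaneously for all $B\in\mathcal{A}$, and the sup characterization of TV gives \eqref{eqn:TV_bound_mixing_1}. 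Estimate \eqref{eqn:TV_bound_mixing_2} will follow by the identical argument, with $\{Z_{n+s}\in B\}$ replaced by $\{Z_{n+s:N}\in B\}$ for $B$ ranging over the countable family of rational rectangles in $(\R^k)^{N-n-s+1}$ supplied by Proposition~\ref{prop:TV_countable_approx}, and $\eta_{\bold Z}$ replaced by $\bar\eta_{\bold Z}$.
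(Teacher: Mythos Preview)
Your proposal is correct and follows essentially the same route as the paper: approximate the point conditioning $\{Z_{1:n}=z_{1:n}\}$ by shrinking product sets, invoke the definition of $\eta_{\bold Z}(s)$ on those sets, pass to the limit, and then use the countable generating family $\mathcal{A}$ from Proposition~\ref{prop:TV_countable_approx} to upgrade the pointwise bound to a TV bound. The only substantive difference is the limiting device: the paper uses the Lebesgue differentiation theorem with shrinking product balls $\prod_{j=1}^n B_\delta(z_j)$ (citing \cite[Theorem 5.8.8]{bogachev2007measure}), whereas you use dyadic product cubes together with L\'evy's upward theorem; both accomplish the same convergence and your version is arguably a bit cleaner, since it sidesteps the minor bookkeeping the paper needs to pass between balls and product balls.
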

\begin{proof}
We only prove \eqref{eqn:TV_bound_mixing_1} as \eqref{eqn:TV_bound_mixing_2} is obtained using precisely the same arguments.
\newcommand{\Bprodn}[2][]{B^{\otimes}_{#1}(z_{1:#2})}

Fix $A \in \mathcal{B}(\R^k)$ and denote $\prod_{s = 1}^n B_\delta(z_s)$ by $\Bprodn[\delta]{n}$. We first argue that 
\begin{equation}\label{eqn:lebesgue_diff}
\mathbb{P}\left(Z_{n+s} \in A \mid Z_{1:n} \in \Bprodn[\delta]{n}\right) \to \mathbb{P}\left(Z_{n+s} \in A \mid Z_{1:n} = z_{1:n}\right), \;\; \delta \to 0,
\end{equation}
$\P(Z_{1:n}\in dz_{1:n})$-almost everywhere. Indeed, this follows from the Lebesgue differentiation theorem in the form of \cite[Theorem 5.8.8]{bogachev2007measure}, applied with $\nu(dz_{1:n}) = \mathbb{P}(Z_{n+s} \in A, Z_{1:n} \in dz_{1:n})$ and $\mu(dz_{1:n}) = \mathbb{P}(Z_{1:n} \in dz_{1:n})$. We point out that our statement differs from the cited result, as $B_\delta(z_{1:n})$ is replaced with $\Bprodn[\delta]{n}$; such a replacement is possible as $B_\delta(z_{1:n}) \subset \Bprodn[\delta]{n} \subset B_{\sqrt{n} \delta}(z_{1:n})$. Similarly,
\begin{equation}\label{eqn:lebesgue_diff_short}
\mathbb{P}\left(Z_{n+s} \in A \mid Z_{1:n-1} \in \Bprodn[\delta]{n-1}\right) \to \mathbb{P}\left(Z_{n+s} \in A \mid Z_{1:n-1} = z_{1:n-1}\right), \;\; \delta \to 0,
\end{equation}
$\P(Z_{1:n}\in dz_{1:n})$-almost everywhere.

Moreover, $\mathbb{P}(Z_{1:n} \in \Bprodn[\delta]{n}) > 0$ for any $\delta > 0$ $\P(Z_{1:n}\in dz_{1:n})$-almost everywhere, which implies
\begin{equation}\label{eqn:TV_bound_central}
|\mathbb{P}(Z_{n+s} \in A \mid Z_{1:n} \in \Bprodn[\delta]{n}) - \mathbb{P}(Z_{n+s} \in A \mid Z_{1:n-1} \in \Bprodn[\delta]{n-1})| \leq \eta_{\bold Z}(s)
\end{equation}
by definition of $\eta_{\bold Z}.$  Next, using triangle inequality, we get for any $\delta > 0$ 
\begin{align}
\begin{split}
&|\mathbb{P}(Z_{n+s} \in A \mid Z_{1:n} = z_{1:n}) - \mathbb{P}(Z_{n+s} \in A \mid Z_{1:n-1} = z_{1:n-1})|\\
&\leq |\mathbb{P}(Z_{n+s} \in A \mid Z_{1:n} = z_{1:n}) - \mathbb{P}(Z_{n+s} \in A \mid Z_{1:n} \in \Bprodn[\delta]{n})|\\
&\quad+ |\mathbb{P}(Z_{n+s} \in A \mid Z_{1:n} \in \Bprodn[\delta]{n}) - \mathbb{P}(Z_{n+s} \in A \mid Z_{1:n-1} \in \Bprodn[\delta]{n-1})|\\
&\quad+ |\mathbb{P}(Z_{n+s} \in A \mid Z_{1:n-1} \in \Bprodn[\delta]{n-1}) - \mathbb{P}(Z_{n+s} \in A \mid Z_{1:n-1} = z_{1:n-1})|,
\end{split}
\end{align}
hence after letting $\delta \to 0$ and applying \eqref{eqn:lebesgue_diff}, \eqref{eqn:lebesgue_diff_short} and \eqref{eqn:TV_bound_central} we obtain
\begin{equation}\label{eqn:TV_bound_fixed_set}
|\mathbb{P}(Z_{n+s} \in A \mid Z_{1:n} = z_{1:n}) - \mathbb{P}(Z_{n+s} \in A \mid Z_{1:n-1} = z_{1:n-1})| \leq \eta_{\bold Z}(s)
\end{equation}
$\P(Z_{1:n}\in dz_{1:n})$-almost everywhere. We now use Proposition \ref{prop:TV_countable_approx} and take the supremum over $A \in \mathcal{A}$, where $\mathcal{A}$ is the collection of finite unions of open rectangles with rational endpoints. As a result (note that $\mathcal{A}$ is countable, and almost-everywhere inequalities are stable under suprema over countable sets), we obtain \eqref{eqn:TV_bound_mixing_1}, as desired.
\end{proof}

\begin{proposition}\label{prop:TV_osc}
Let $f: \R^k \to \R$ satisfy $\operatorname{osc}(f) := \sup_x f(x) - \inf_x f(x) \leq 1$. Then for any $\mu, \nu \in \mathcal{P}(\R^k)$
\begin{equation}\label{eqn:TV_osc}
\left|\int f(x) \, \mu(dx) - \int f(x) \, \nu(dx)\right| \leq \operatorname{TV}(\mu, \nu).
\end{equation}
\end{proposition}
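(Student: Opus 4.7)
The plan is to reduce the claim to the elementary dual characterization
$\operatorname{TV}(\mu,\nu) = \sup_{g}\,|\!\int g\,d\mu - \!\int g\,d\nu|$, where the supremum runs over Borel $g:\R^k\to[0,1]$. Note first that $\operatorname{osc}(f)\leq 1$ forces $f$ to be bounded, so both integrals on the left-hand side are well defined. Setting $c := \inf_x f(x)$ and $g := f - c$, we get a Borel function $g:\R^k\to[0,1]$, and since $\mu$ and $\nu$ are probability measures we have $\int f\,d\mu - \int f\,d\nu = \int g\,d\mu - \int g\,d\nu$. Hence it suffices to prove
$$\Big|\int g\,d\mu - \int g\,d\nu\Big|\leq \operatorname{TV}(\mu,\nu)$$
for every Borel $g:\R^k\to[0,1]$.

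I would first establish this for simple functions $g = \sum_{i=1}^n c_i\,\mathbf{1}_{A_i}$ with $c_i \in [0,1]$ and $(A_i)_{i=1}^n$ a disjoint Borel partition of $\R^k$. Introducing $I^+ := \{i : \mu(A_i)\geq \nu(A_i)\}$ and $A^+ := \bigcup_{i\in I^+} A_i$, I split $\int g\,d\mu - \int g\,d\nu = \sum_i c_i(\mu(A_i)-\nu(A_i))$ according to the sign of the summands. Using $c_i\geq 0$ to drop the nonpositive contributions and $c_i\leq 1$ to bound the remaining ones, one gets
$$\int g\,d\mu - \int g\,d\nu \;\leq\; \sum_{i\in I^+}(\mu(A_i)-\nu(A_i)) \;=\; \mu(A^+)-\nu(A^+)\;\leq\;\operatorname{TV}(\mu,\nu),$$
where the last inequality uses the characterization from Proposition \ref{prop:TV_countable_approx} (or an immediate approximation thereof). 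The matching lower bound follows from swapping $\mu$ and $\nu$.

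To upgrade to a general Borel $g:\R^k\to[0,1]$, I would approximate uniformly by $[0,1]$-valued simple functions, e.g.~$g_n := n^{-1}\lfloor ng\rfloor$, which satisfy $\|g-g_n\|_\infty\leq 1/n$. Bounded convergence then transfers the inequality from $g_n$ to $g$ under both $\mu$ and $\nu$. I do not expect a genuine obstacle here; the only step that needs a moment of care is the simple-function calculation, where the hypothesis $c_i\in[0,1]$ is used in two distinct ways (nonnegativity to discard terms, upper bound by $1$ to replace weighted mass differences by raw ones).
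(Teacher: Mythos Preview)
Your proposal is correct and follows essentially the same approach as the paper: shift $f$ by a constant and apply the dual characterization of $\operatorname{TV}$. The paper's version is slightly more concise --- it centers $f$ by subtracting the midpoint $m=\tfrac12(\inf f+\sup f)$ to obtain $\|g\|_\infty\le\tfrac12$ and then directly invokes the standard bound $\bigl|\int g\,d\mu-\int g\,d\nu\bigr|\le 2\|g\|_\infty\,\operatorname{TV}(\mu,\nu)$, rather than re-deriving it via simple-function approximation as you do.
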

\begin{proof}
Let $m := \tfrac{1}{2} (\inf_x f(x) + \sup_x f(x))$ and set $g(x) := f(x) - m$. Then $\|g\|_\infty \leq \tfrac{1}{2}$, as $\operatorname{osc}(f) \leq 1$. By the variational characterization of total variation,
\begin{equation*}
\left|\int g(x) \, \mu(dx) - \int g(x) \, \nu(dx)\right| \leq 2 \|g\|_\infty \, \operatorname{TV}(\mu, \nu) \leq \operatorname{TV}(\mu, \nu).
\end{equation*}
Since $\int f(x)\,\mu(dx) - \int f(x)\,\nu(dx) = \int g(x)\,\mu(dx) - \int g(x)\,\nu(dx)$, \eqref{eqn:TV_osc} follows.
\end{proof}

\begin{lemma}[``Subadditivity" of $\operatorname{TV}$]\label{lem:TV_subadditivity}
For $M \in \N$ let $\phi: (\mathbb{R}^k)^M \to \mathbb{R}$ have $1$-bounded differences, i.e.~is a 1-Lipschitz function with respect to the Hamming distance. Let $\mu, \nu \in \mathcal{P}((\R^k)^M)$. Then
\begin{equation}\label{eqn:TV_subadditivity}
\int \phi(x_{1:M}) \, \mu(dx_{1:M}) - \int \phi(x_{1:M}) \, \nu(dx_{1:M}) \leq \sum_{m = 1}^M \operatorname{TV}(\mu(dx_{m:M}), \nu(dx_{m:M})).
\end{equation}
\end{lemma}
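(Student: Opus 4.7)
The proof will proceed by induction on $M$, combined with a coupling-based duality argument that exploits the 1-bounded-differences hypothesis.

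For the base case $M=1$, the hypothesis that $\phi$ has 1-bounded differences on a single coordinate means that $\operatorname{osc}(\phi)\le 1$, so the claim is immediate from Proposition~\ref{prop:TV_osc}.

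For the inductive step, the key observation is the following dual reformulation. For any coupling $\pi\in\Pi(\mu,\nu)$, the 1-Lipschitz property of $\phi$ with respect to the Hamming distance yields
\[
\int \phi\,d\mu - \int \phi\,d\nu \;=\; \int[\phi(x_{1:M})-\phi(y_{1:M})]\,\pi(dx,dy) \;\le\; \sum_{m=1}^{M}\pi(\{x_m\ne y_m\}),
\]
since $|\phi(x)-\phi(y)|\le d_H(x,y)=\sum_m \mathds{1}_{\{x_m\ne y_m\}}$. Thus it suffices to exhibit one coupling $\pi_\ast\in\Pi(\mu,\nu)$ with $\pi_\ast(\{x_m\ne y_m\})\le \operatorname{TV}(\mu_{m:M},\nu_{m:M})$ for every $m$. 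My plan is to construct $\pi_\ast$ recursively from the last coordinate backward: at stage $m=M$ take the maximal coupling of $\mu_M$ and $\nu_M$; at stage $m<M$, given the already-built coupling $\pi_{m+1}$ of $\mu_{m+1:M}$ and $\nu_{m+1:M}$, sample $(X_m,Y_m)$ from the maximal coupling of the conditionals $\mu_{m\mid m+1:M}(\cdot\mid X_{m+1:M})$ and $\nu_{m\mid m+1:M}(\cdot\mid Y_{m+1:M})$. An application of the chain rule for total variation together with the fact that the diagonal-measure of $\pi_{m+1}$ is dominated by $\mu_{m+1:M}\wedge\nu_{m+1:M}$ should give by induction on $m$ that $\pi_m(\{x_{m:M}\ne y_{m:M}\})\le \operatorname{TV}(\mu_{m:M},\nu_{m:M})$, which implies the coordinate-wise bound $\pi_\ast(\{x_m\ne y_m\})\le \operatorname{TV}(\mu_{m:M},\nu_{m:M})$. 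Summing over $m$ concludes.

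The alternative (and in some ways more direct) route I would pursue in parallel is a telescoping through hybrid measures. Writing $\mu-\nu$ as a telescoping sum of signed measures in which the conditional distribution of the $m$-th coordinate is successively replaced, each increment can be rewritten as a single-coordinate integral against $\mu_{m:M}-\nu_{m:M}$ of a function $h_m(x_{m:M})$ obtained by integrating $\phi$ against suitable conditional kernels; one then applies Proposition~\ref{prop:TV_osc}.

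The main obstacle, in either route, is to avoid a loss of constants in $M$. In the coupling approach the difficulty is that the sequential maximal coupling of conditionals need not realize the maximal coupling of $\mu_{m:M}$ and $\nu_{m:M}$; the chain rule must be used carefully in the direction $\operatorname{TV}(\mu_{m:M},\nu_{m:M})\le \operatorname{TV}(\mu_{m+1:M},\nu_{m+1:M})+\int\operatorname{TV}(\mu_{m\mid m+1:M},\nu_{m\mid m+1:M})\,d(\mu_{m+1:M}\wedge\nu_{m+1:M})$, and one needs the diagonal sub-probability measure produced by the inductive step to remain dominated by the pairwise minimum $\mu_{m+1:M}\wedge\nu_{m+1:M}$, which is the precise property preserved by building from maximal couplings of conditionals. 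In the telescoping approach the obstacle is that a naive averaging destroys the 1-oscillation property in the unused coordinates; the correct choice of averaging kernel (namely, the regular conditional of $\nu$ on the head given the tail) is needed so that the excess dependence cancels.
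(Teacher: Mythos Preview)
Your primary (coupling) route contains a genuine gap: the inductive claim
\[
\pi_m\bigl(\{x_{m:M}\neq y_{m:M}\}\bigr)\le \operatorname{TV}(\mu_{m:M},\nu_{m:M})
\]
is \emph{false} for the backward construction you describe. Take $M=2$, state space $\{0,1\}$, with $\mu_2=(0.9,0.1)$, $\nu_2=(0.1,0.9)$, $\mu_{1\mid 2}(\cdot\mid 0)=\nu_{1\mid 2}(\cdot\mid 1)=(0.9,0.1)$ and $\mu_{1\mid 2}(\cdot\mid 1)=\nu_{1\mid 2}(\cdot\mid 0)=(0.1,0.9)$. The maximal coupling $\pi_2$ has diagonal $(0.1,0.1)$; on each diagonal point the conditionals have TV equal to $0.8$, so $\pi_1$ places only mass $0.04$ on the full diagonal, i.e.\ $\pi_1(\{x_{1:2}\neq y_{1:2}\})=0.96$. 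But $\operatorname{TV}(\mu_{1:2},\nu_{1:2})=0.8$. The chain-rule inequality you quote goes the \emph{wrong way} for this purpose: it upper-bounds $\operatorname{TV}(\mu_{m:M},\nu_{m:M})$, whereas you need a lower bound. The property ``diagonal dominated by $\mu_{m+1:M}\wedge\nu_{m+1:M}$'' is indeed preserved, but that only yields $\pi_m(\text{diag})\le|\mu_{m:M}\wedge\nu_{m:M}|$, again the wrong direction. So the reduction to a single coupling with $\pi_\ast(\{x_m\neq y_m\})\le \operatorname{TV}(\mu_{m:M},\nu_{m:M})$ for all $m$ simultaneously is not established (and it is unclear whether any such coupling exists in general).

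Your telescoping alternative is closer in spirit to what works, but as stated it is too vague to stand on its own; the obstacle you flag (loss of the 1-oscillation under averaging) is exactly the point that needs a concrete idea.

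The paper's argument is considerably simpler than either route: induct on $M$, and at the inductive step split $\phi(x_{1:M})=g(x_{1:M})+\psi(x_{2:M})$ with $\psi(x_{2:M}):=\inf_{x_1}\phi(x_{1:M})$ and $g:=\phi-\psi$. Then $0\le g\le 1$ (since changing only $x_1$ moves $\phi$ by at most $1$), so Proposition~\ref{prop:TV_osc} controls $\int g\,d\mu-\int g\,d\nu$ by $\operatorname{TV}(\mu,\nu)$; and $\psi$ inherits 1-bounded differences, so the induction hypothesis bounds $\int\psi\,d\mu_{2:M}-\int\psi\,d\nu_{2:M}$ by $\sum_{m=2}^M\operatorname{TV}(\mu_{m:M},\nu_{m:M})$. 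No coupling construction, no telescoping through hybrids --- just one subtraction.
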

\begin{proof}
We argue by induction on $M$. For $M = 1$ the claim follows from
Proposition \ref{prop:TV_osc} (since $1$-bounded differences implies $\operatorname{osc}(\phi) \leq 1$).

Assume that the statement holds for $M - 1$. Define
$$
\psi(x_{2:M}) := \inf_{x_1} \phi(x_{1:M}), \qquad g(x_{1:M}) := \phi(x_{1:M}) - \psi(x_{2:M}).
$$
For each fixed $x_{2:M} \in (\R^k)^{M-1}$, changing only $x_1$ changes $\phi$ by at most 1, hence
\begin{equation}\label{eqn:TV_subadditivity_osc}
0 \leq g(x_{1:M}) \leq 1,
\end{equation}
and $\psi(x_{2:M})$ has 1-bounded differences. Thus, applying Proposition \ref{prop:TV_osc} on $(\R^k)^M$ with $f = g$ and using induction hypothesis for $\psi$, we obtain
\begin{align*}
\int \phi(x_{1:M}) \, \mu(dx_{1:M}) - \int \phi(x_{1:M}) \, \nu(dx_{1:M}) &= \int g(x_{1:M}) \, \mu(dx_{1:M}) - \int g(x_{1:M}) \, \nu(dx_{1:M})\\
&\quad + \int \psi(x_{2:M}) \, \mu(dx_{2:M}) - \int \psi(x_{2:M}) \, \nu(dx_{2:M})\\
&\leq \operatorname{TV}(\mu, \nu) + \sum_{m = 2}^M \operatorname{TV}(\mu(dx_{m:M}), \nu(dx_{m:M}))\\
&= \sum_{m = 1}^M \operatorname{TV}(\mu(dx_{m:M}), \nu(dx_{m:M})),
\end{align*}
as claimed.
\end{proof}

\begin{proposition}[Connection with $\phi$-mixing]\label{prop:phi_mixing}
Fix $N \in \mathbb{N}$ and let $\bold Z = (Z_n)_{n = 1}^N$ be a collection of random elements taking values in $\R^k$. Define 
\begin{equation}\label{eqn:phi_mixing}
\phi_{\bold Z}(s) := \sup_{\substack{1 \leq n < N - s 
+ 1\\A \in \mathcal{B}((\R^k)^n)\\ \mathbb{P}(Z_{1:n} \in A) > 0}} \operatorname{TV}\left(\operatorname{Law}(Z_{n+s} \mid Z_{1:n} \in A), \operatorname{Law}(Z_{n+s})\right)
\end{equation}
for $1 \leq s < N$. Then $\phi_{\bold Z}$ is the $\phi$-mixing coefficient as defined in \eqref{eq:examples} and
$$
\phi_{\bold Z}(s) \leq \sum_{k = s}^{N-1} \eta_{\bold Z}(k).
$$
\end{proposition}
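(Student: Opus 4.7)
The plan is to address the two assertions separately. For the identification of $\phi_{\bold Z}$ with the $\phi$-mixing coefficient, I would simply unwind definitions: by the variational formula $\operatorname{TV}(\mu,\nu)=\sup_{C\in\mathcal{B}(\R^k)}|\mu(C)-\nu(C)|$,
\[
\operatorname{TV}(\operatorname{Law}(Z_{n+s}\mid Z_{1:n}\in A),\operatorname{Law}(Z_{n+s})) = \sup_{C\in\mathcal{B}(\R^k)}|\mathbb{P}(Z_{n+s}\in C\mid Z_{1:n}\in A)-\mathbb{P}(Z_{n+s}\in C)|,
\]
so $\phi_{\bold Z}(s)$ coincides with $\sup_{1\le n<N-s+1}\phi(\sigma(Z_{n+s}),\sigma(Z_{1:n}))$, which is precisely the $\phi$-mixing coefficient of $\bold Z$ (at gap $s$) in the sense of \eqref{eq:examples}.

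For the inequality, the key input is Proposition \ref{prop:TV_bound_mixing}, which upgrades the product-event definition of $\eta_{\bold Z}(s)$ to a \emph{pointwise} bound: for each $1\le j\le n$, for $\mathbb{P}(Z_{1:j}\in dz_{1:j})$-a.e. $z_{1:j}$,
\[
\operatorname{TV}(\operatorname{Law}(Z_{n+s}\mid Z_{1:j}=z_{1:j}),\operatorname{Law}(Z_{n+s}\mid Z_{1:j-1}=z_{1:j-1})) \le \eta_{\bold Z}(n+s-j).
\]
Intersecting the countably many exceptional null sets over $j=1,\ldots,n$, this bound holds for $\mathbb{P}(Z_{1:n}\in dz_{1:n})$-a.e. $z_{1:n}$ simultaneously in $j$. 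A telescoping argument using the triangle inequality for $\operatorname{TV}$, together with the convention $Z_{1:0}:=\emptyset$ (so that the $j=1$ step collapses to the unconditional law $\operatorname{Law}(Z_{n+s})$), then yields for a.e. $z_{1:n}$,
\[
\operatorname{TV}(\operatorname{Law}(Z_{n+s}\mid Z_{1:n}=z_{1:n}),\operatorname{Law}(Z_{n+s})) \le \sum_{j=1}^{n}\eta_{\bold Z}(n+s-j) = \sum_{k=s}^{n+s-1}\eta_{\bold Z}(k) \le \sum_{k=s}^{N-1}\eta_{\bold Z}(k).
\]

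To pass from pointwise conditioning to conditioning on an arbitrary Borel event $\{Z_{1:n}\in A\}$, I would use the tower property: for any $C\in\mathcal{B}(\R^k)$,
\[
|\mathbb{P}(Z_{n+s}\in C\mid Z_{1:n}\in A)-\mathbb{P}(Z_{n+s}\in C)| \le \mathbb{E}\bigl[|\mathbb{P}(Z_{n+s}\in C\mid Z_{1:n})-\mathbb{P}(Z_{n+s}\in C)|\,\bigm|\,Z_{1:n}\in A\bigr],
\]
and the integrand is bounded above by $\operatorname{TV}(\operatorname{Law}(Z_{n+s}\mid Z_{1:n}),\operatorname{Law}(Z_{n+s}))$, which the previous step controls by $\sum_{k=s}^{N-1}\eta_{\bold Z}(k)$ a.s. Taking the supremum over $C$, then over $n$ and $A$, gives $\phi_{\bold Z}(s)\le\sum_{k=s}^{N-1}\eta_{\bold Z}(k)$.

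The main delicate point is the mismatch between the product-event sup in Definition \ref{def:eta_mixing} and the general-Borel-event sup in \eqref{eqn:phi_mixing}; a naive telescope in terms of general Borel sets of $(\R^k)^j$ cannot be performed directly, since there is no canonical restriction of $A\in\mathcal{B}((\R^k)^n)$ to first $j$ coordinates. The resolution is to invoke Proposition \ref{prop:TV_bound_mixing} to work pointwise (where telescoping is unproblematic) and then to average over $\{Z_{1:n}\in A\}$ at the end; everything else is standard triangle-inequality bookkeeping.
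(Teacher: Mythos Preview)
Your proposal is correct and follows essentially the same approach as the paper: telescope via Proposition \ref{prop:TV_bound_mixing} to get the pointwise bound on $\operatorname{TV}(\operatorname{Law}(Z_{n+s}\mid Z_{1:n}=z_{1:n}),\operatorname{Law}(Z_{n+s}))$, then average over the event $\{Z_{1:n}\in A\}$. The only cosmetic difference is that the paper phrases the averaging step as ``integrate the pointwise bound over $\mathbb{P}(Z_{1:n}\in dz_{1:n}\mid Z_{1:n}\in A)$ and use convexity of $\operatorname{TV}$,'' whereas you spell it out via the tower property for a fixed test set $C$ and then take the supremum; these are two ways of saying the same thing.
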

\begin{proof}
Comparing \eqref{eq:examples} with \eqref{eqn:phi_mixing} shows $\phi_{\bold Z}(s)=\phi(s).$
Next, fix $1 \leq s < N$ and $1 \leq n < N - s + 1$. By Proposition \ref{prop:TV_bound_mixing} and the triangle inequality for $\operatorname{TV}$,
\begin{align}\label{eqn:eta_phi_connection}
\begin{split}
&\operatorname{TV}\left(\operatorname{Law}(Z_{n+s} \mid Z_{1:n} = z_{1:n}), \operatorname{Law}(Z_{n+s})\right)\\
&\leq \sum_{k = 1}^{n} \operatorname{TV}\left(\operatorname{Law}(Z_{n+s} \mid Z_{1:k} = z_{1:k}), \operatorname{Law}(Z_{n+s} \mid Z_{1:k-1} = z_{1:k-1})\right)\\
&\leq \sum_{k = 1}^n \eta_{\bold Z}(s + k - 1),
\end{split}
\end{align}
$\mathbb{P}(Z_{1:n} \in dz_{1:n})$-almost surely. Take $A \in \mathcal{B}((\R^k)^n)$, which satisfies $\mathbb{P}(Z_{1:n} \in A) > 0$. Integrating \eqref{eqn:eta_phi_connection} over $\mathbb{P}(Z_{1:n} \in dz_{1:n} \mid Z_{1:n} \in A)$ and using convexity of $\operatorname{TV}$, we obtain
$$
\operatorname{TV}(\operatorname{Law}(Z_{n+s} \mid Z_{1:n} \in A), \operatorname{Law}(Z_{n+s})) \leq \sum_{k = 1}^n \eta_{\bold Z}(s + k - 1),
$$
hence \eqref{eqn:phi_mixing} follows after taking supremum over $A \in \mathcal{B}((\R^k)^n)$ with positive $\mathbb{P}$-probability.
\end{proof}

\begin{proposition}\label{prop:TV_1_gaussians_bound}
For any $\sigma \in (0, 1]$ and $x_1, x_2 \in \mathbb{R}^d$ we have
$$
\operatorname{TV}^{(\sigma)}_1(\delta_{x_1}, \delta_{x_2}) \leq C \cdot (1 + \|x_2\|) \cdot (\sigma^{-1} \cdot \|x_1 - x_2\| + \sigma^{-2} \cdot \|x_1 - x_2\|^2),
$$
where $C > 0$ depends on $d$.
\end{proposition}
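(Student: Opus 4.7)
The plan is to express $\operatorname{TV}^{(\sigma)}_1(\delta_{x_1}, \delta_{x_2})$ as an explicit integral against Lebesgue measure and then control the integrand by an integral along a straight line joining $x_1$ and $x_2$, using the smoothness of the Gaussian density $\varphi_\sigma$. Since $\delta_{x_i} \ast \mathcal{N}_\sigma$ has density $\varphi_\sigma(\cdot - x_i)$, Definition \ref{def:weighted_TV} gives
\begin{equation*}
\operatorname{TV}^{(\sigma)}_1(\delta_{x_1}, \delta_{x_2}) = \int_{\R^d} \Bigl(\|x\| + \tfrac{1}{2}\Bigr) \bigl|\varphi_\sigma(x - x_1) - \varphi_\sigma(x - x_2)\bigr| \, dx.
\end{equation*}

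Next, applying the fundamental theorem of calculus along the segment from $x_2$ to $x_1$ yields
\begin{equation*}
\varphi_\sigma(x - x_1) - \varphi_\sigma(x - x_2) = -\int_0^1 \nabla \varphi_\sigma\bigl(x - x_2 - t(x_1 - x_2)\bigr) \cdot (x_1 - x_2) \, dt.
\end{equation*}
Using $\|\nabla \varphi_\sigma(y)\| = \sigma^{-2} \|y\| \, \varphi_\sigma(y)$, applying Cauchy--Schwarz and Fubini, and then changing variables $y = x - x_2 - t(x_1 - x_2)$ so that $\|x\| \le \|y\| + \|x_2\| + \|x_1 - x_2\|$ uniformly for $t \in [0,1]$, I would bound
\begin{equation*}
\operatorname{TV}^{(\sigma)}_1(\delta_{x_1}, \delta_{x_2}) \le \frac{\|x_1 - x_2\|}{\sigma^2} \int_{\R^d} \Bigl(\|y\| + \|x_2\| + \|x_1 - x_2\| + \tfrac{1}{2}\Bigr) \|y\| \, \varphi_\sigma(y) \, dy.
\end{equation*}

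The remaining integral is a sum of Gaussian moments of $y \sim \mathcal{N}_\sigma$ of orders one and two, which equal $C_d \, \sigma$ and $d \, \sigma^2$ respectively. Substituting these gives
\begin{equation*}
\operatorname{TV}^{(\sigma)}_1(\delta_{x_1}, \delta_{x_2}) \le C_d \, \|x_1 - x_2\| \Bigl( 1 + \sigma^{-1}(1 + \|x_2\|) + \sigma^{-1} \|x_1 - x_2\| \Bigr).
\end{equation*}
Finally, using the standing assumption $\sigma \in (0,1]$, I absorb the constant term $1$ into $\sigma^{-1}(1 + \|x_2\|)$ and upgrade $\sigma^{-1}$ to $\sigma^{-2}$ in the quadratic term, which yields
\begin{equation*}
\operatorname{TV}^{(\sigma)}_1(\delta_{x_1}, \delta_{x_2}) \le C \, (1 + \|x_2\|) \bigl(\sigma^{-1} \|x_1 - x_2\| + \sigma^{-2} \|x_1 - x_2\|^2\bigr),
\end{equation*}
as desired. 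No step here looks like a genuine obstacle: the argument is a straightforward gradient estimate on $\varphi_\sigma$ combined with Gaussian moment computations; the only piece of bookkeeping is to make sure the weight $\|x\|$ is transferred into a $\|x_2\|$ factor (via the change of variables) rather than spoiling the rate in $\sigma$.
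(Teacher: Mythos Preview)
Your argument is correct and is genuinely different from the paper's proof. The paper does not use the gradient of $\varphi_\sigma$ at all; instead it appeals to the weighted Pinsker inequality of Bolley--Villani \cite{bolley2005weighted} with weight $\phi(x)=\|x\|+\tfrac12$, which bounds $\operatorname{TV}^{(\sigma)}_1(\delta_{x_1},\delta_{x_2})$ by a quantity of the form $(\tfrac52+\log\int e^{2\|x\|}\varphi_\sigma(x-x_2)\,dx)\cdot(\sqrt{D_{\mathrm{KL}}}+D_{\mathrm{KL}})$, and then uses the explicit Gaussian KL formula $D_{\mathrm{KL}}=C\sigma^{-2}\|x_1-x_2\|^2$ together with a moment-generating-function bound on the log term. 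Your route is more elementary and self-contained: the one-line gradient identity $\|\nabla\varphi_\sigma(y)\|=\sigma^{-2}\|y\|\varphi_\sigma(y)$ plus two Gaussian moments replace the external reference. The paper's approach, on the other hand, would adapt more readily to other smoothing kernels for which one controls the KL divergence but has no closed-form gradient. Both yield the same bound up to constants; your computation in fact first produces $\sigma^{-1}\|x_1-x_2\|^2$ and then loosens it to $\sigma^{-2}\|x_1-x_2\|^2$ only to match the stated form.
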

\begin{proof}
According to \cite[Example 6.2.3]{murphy2022probabilistic} we have 
\begin{equation}\label{eqn:TV_1_KL_bound}
D_{\operatorname{KL}}(\mathcal{N}(x_1, \sigma^2 I_d) || \mathcal{N}(x_2, \sigma^2 I_d)) \leq C \cdot \sigma^{-2} \cdot \|x_1 - x_2\|^2,
\end{equation}
where $C > 0$ is an absolute constant and $D_{\operatorname{KL}}$ is the \textit{Kullback-Leibler divergence} defined as
$$
D_{\operatorname{KL}}(\mu, \nu) = \begin{cases}
\int \log \frac{d\mu}{d\nu}(x) \, \mu(dx), &\text{if} \;\; \mu \ll \nu,\\
\infty, &\text{otherwise}
\end{cases}
$$
for $\mu, \nu \in \mathcal{P}(\R^d)$. Applying \cite[Theorem 2.1]{bolley2005weighted} with the weight function $\phi(x) = \|x\| + \frac{1}{2}$ yields
\begin{align}
\operatorname{TV}^{(\sigma)}_1(\delta_{x_1}, \delta_{x_2})
&= \int \left(\|x\| + \tfrac{1}{2}\right) \cdot |\varphi_\sigma(x - x_1) - \varphi_\sigma(x - x_2)| \, dx\nonumber\\
&\leq \left(\tfrac{5}{2} + \log \int e^{2 \|x\|} \, \varphi_\sigma(x - x_2) \, dx\right)\nonumber\\
&\quad \cdot \left(\sqrt{D_{\operatorname{KL}}(\mathcal{N}(x_1, \sigma^2 I_d) || \mathcal{N}(x_2, \sigma^2 I_d))} + D_{\operatorname{KL}}(\mathcal{N}(x_1, \sigma^2 I_d) || \mathcal{N}(x_2, \sigma^2 I_d))\right)\nonumber\\
&\overset{\eqref{eqn:TV_1_KL_bound}}{\leq} C \cdot \left(\tfrac{5}{2} + \log \int e^{2 \|x\|} \, \varphi_\sigma(x - x_2) \, dx\right) \cdot (\sigma^{-1} \cdot \|x_1 - x_2\| + \sigma^{-2} \cdot \|x_1 - x_2\|^2),\label{eqn:TV_1_gaussians_bound}
\end{align}
where $C > 0$ is an absolute constant. To complete the proof, it suffices to bound the second term in \eqref{eqn:TV_1_gaussians_bound}. Using the triangle inequality, monotonicity of $x \mapsto e^{x}$, and $\sigma \leq 1$ we obtain
\begin{align*}
\log \int e^{2 \|x\|} \varphi_\sigma(x - x_2) \, dx
&\leq 2 \sigma \|x_2\| + \log \int e^{2 \sigma \|x\|} \varphi_1(x) \, dx\\
&\leq 2 \|x_2\| + \log \int e^{2 \|x\|} \varphi_1(x) \, dx\\
&\leq C \cdot (1 + \|x_2\|),
\end{align*}
where $C > 0$ depends on $d$. 
\end{proof}

The following result also appears in the proof of \cite[Lemma 7.1, Step 1]{acciaio2024convergence} in a slightly different form. We modify the approximation argument, such that a projection to a compact subset is injective, and hence preserves continuity of kernels of the original distribution. 

\begin{lemma}[Compact approximation]\label{lem:compact_approximation}
Let $R \geq \sqrt{d}\Delta_N$. Then there exists the mapping $\kappa_R: \R^d \to B_{2R}(0) \subset \R^d$, which is continuous and injective and satisfies
\begin{enumerate}
    \item For any $\mu \in \mathcal{P}((\R^d)^T)$
    $$
    \mathcal{AW}(\mu, \mu_R) \leq \sqrt{T} \int_{\{\|x\| \ge R\}} \|x\| \, \mu(dx),
    $$
    where $\mu_R := (\kappa_R)_\# \mu$.
    \item On $B_{\frac{R}{2}}(0) \subset \R^d$ we have $\kappa_R \circ \varphi^N = \varphi^N \circ \kappa_R$.
\end{enumerate}
\end{lemma}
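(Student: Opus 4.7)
The plan is to build $\kappa_R$ as a radial shrinking map: set $\kappa_R(y) := y$ for $\|y\| \leq R$, and for $\|y\| > R$ define
\[
\kappa_R(y) := f(\|y\|) \cdot \tfrac{y}{\|y\|}, \quad \text{where } f(r) := 2R - \tfrac{R^2}{r},
\]
so $f:[R,\infty) \to [R,2R)$ is continuous, strictly increasing, and $f(R)=R$. This makes $\kappa_R$ continuous across $\|y\|=R$, injective (it preserves radial lines and is strictly monotone in the norm on each line), and sends $\R^d$ into $B_{2R}(0)$. Note that by construction $\|y - \kappa_R(y)\| = (\|y\| - f(\|y\|)) \mathds{1}_{\{\|y\|>R\}} \leq \|y\|\,\mathds{1}_{\{\|y\|>R\}}$.

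For property (2), fix $x \in B_{R/2}(0)$. Since $\kappa_R$ is the identity on $B_R(0)$, $\kappa_R(x) = x$, hence $\varphi^N(\kappa_R(x)) = \varphi^N(x)$. Moreover, $\|\varphi^N(x) - x\| \leq \tfrac{\sqrt d}{2}\Delta_N \leq \tfrac{R}{2}$, by the standing hypothesis $R \geq \sqrt d \Delta_N$, so $\|\varphi^N(x)\| \leq R$ and thus $\kappa_R(\varphi^N(x)) = \varphi^N(x)$. The two sides therefore agree.

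For property (1), let $\Kappa: (\R^d)^T \to (\R^d)^T$ act coordinate-wise by $\kappa_R$, and consider the graph coupling $\pi := (\mathrm{id},\Kappa)_\# \mu \in \Pi(\mu,\mu_R)$. I would verify bicausality by noting that $\Kappa$ is injective and coordinate-wise: for each $m$, the disintegration
\[
\pi_{x_{1:m},y_{1:m}}(dx_{m+1},dy_{m+1}) = \mu_{x_{1:m}}(dx_{m+1}) \otimes \delta_{\kappa_R(x_{m+1})}(dy_{m+1})
\]
has $x$-marginal $\mu_{x_{1:m}}$, and its $y$-marginal $(\kappa_R)_\# \mu_{x_{1:m}}$ coincides with $(\mu_R)_{y_{1:m}}$ because injectivity of $\Kappa$ makes $x_{1:m}$ a measurable function of $y_{1:m}$ on the support of $\mu_R$, so the disintegration of $\mu_R = \Kappa_\#\mu$ is exactly the pushforward of the disintegration of $\mu$. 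Bounding the cost using $\|x_t - \kappa_R(x_t)\| \leq \|x_t\|\mathds{1}_{\{\|x_t\|>R\}}$, the observation $\{\|x_t\|>R\} \subseteq \{\|x\|>R\}$, and Cauchy--Schwarz $\sum_t \|x_t\| \leq \sqrt T \|x\|$, gives
\[
\mathcal{AW}(\mu,\mu_R) \leq \int \sum_{t=1}^T \|x_t - \kappa_R(x_t)\|\,\mu(dx) \leq \sqrt T \int_{\{\|x\|\geq R\}} \|x\|\,\mu(dx).
\]

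The main obstacle is the bicausality verification: one must carefully argue that the push-forward structure under an injective coordinate-wise map preserves the disintegration identity in both directions. Once injectivity is in place, this is conceptually routine, but it is the single step where the choice of an injective (as opposed to the simpler non-injective projection onto $\overline{B_R(0)}$) radial shrinking becomes essential.
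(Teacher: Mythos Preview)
Your proof is correct and follows essentially the same approach as the paper: a radial shrinking map that is the identity on $B_R(0)$ and compresses the exterior into the shell $[R,2R)$, together with the graph coupling $(\mathrm{id},\Kappa)_\#\mu$ whose bicausality follows from coordinatewise injectivity. The only difference is cosmetic: the paper uses $g_R(r)=2R-R\exp(1-r/R)$ where you use $f(r)=2R-R^2/r$, and you spell out the bicausality verification in slightly more detail than the paper does.
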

\begin{proof}
Fix $R > 0$ and define $\kappa_R: \R^d \to B_{2R}(0)$ as follows:
$$
\kappa_R(x) := \begin{cases}
    0, &x = 0,\\
    \frac{g_R(\|x\|)}{\|x\|} \cdot x, &x \neq 0,
\end{cases}\quad\text{where}\quad g_R(r) := \begin{cases}
    r, &r \leq R,\\
    2R - R \exp\!\left(1 - \frac{r}{R}\right), &r > R.
\end{cases}
$$
Note that $\|\kappa_R(x)\| = g_R(\|x\|) \uparrow 2R,$ as $\|x\| \to \infty$, and
\begin{align}
\kappa_R(x) = x \quad &\text{for } x \in B_{R}(0) \subset \R^d,\label{eqn:compact_approximation_properties_1}\\
|g_R(r)| \leq r \quad &\text{for all } r \geq 0.\label{eqn:compact_approximation_properties_2}
\end{align}
Let $\mu \in \mathcal{P}((\R^d)^T)$. Since $\kappa_R$ is injective and acts coordinatewise, the coupling
$\pi_R := (\operatorname{Id}, \kappa_R)_\#\mu$ is bicausal, i.e.\ $\pi \in \Pi_{\operatorname{bc}}(\mu, \mu_R)$. Thus,
\begin{align*}
\mathcal{AW}(\mu, \mu_R) &\leq \int \sum_{t = 1}^T \|x_{t} - y_{t}\| \, \pi_R(dx, dy) =\int \sum_{t = 1}^T \|x_t - \kappa_R(x_t)\| \, \mu(dx)\\
&\overset{\eqref{eqn:compact_approximation_properties_1}}{\leq} \int_{\{\|x\| \ge R\}} \sum_{t = 1}^T \|x_t - \kappa_R(x_t)\| \,\mu(dx),
\end{align*}
where the second inequality also uses that
$\{\|x\| < R\}$ implies $\|x_t\| < R$ for all $t$, hence $\kappa_R(x_t) = x_t$. Moreover, by \eqref{eqn:compact_approximation_properties_2},
$\|x_t - \kappa_R(x_t)\| = \|x_t\| - \|\kappa_R(x_t)\| \le \|x_t\|$, so by Cauchy--Schwarz,
\[
\sum_{t=1}^T \|x_t - \kappa_R(x_t)\|
\le \sum_{t=1}^T \|x_t\|
\le \sqrt{T}\,\Big(\sum_{t=1}^T \|x_t\|^2\Big)^{1/2}
=\sqrt{T}\,\|x\|.
\]
This proves \emph{(1)}.

For \emph{(2)}, let $x \in B_{\frac{R}{2}}(0)$. Since $\kappa_R(x) = x$ by \eqref{eqn:compact_approximation_properties_1}, it suffices to show that $\varphi^N(x) \in B_R(0)$. Using the quantization error bound $\|\varphi^N(x) - x\| \le \tfrac{1}{2}\sqrt{d}\, \Delta_N$ and $R \ge \sqrt d\,\Delta_N$,
\begin{equation}\label{eqn:compact_approximation_properties_3}
\|\varphi^N(x)\| \leq \|x\| + \|\varphi^N(x) - x\| \leq \tfrac{R}{2} + \tfrac{1}{2}\sqrt{d} \Delta_N \leq R.
\end{equation}
Thus,
$$
\varphi^N(\kappa_R(x)) \overset{\eqref{eqn:compact_approximation_properties_1}}{=} \varphi^N(x) \overset{\substack{\eqref{eqn:compact_approximation_properties_1}\\\eqref{eqn:compact_approximation_properties_3}}}{=} \kappa_R(\varphi^N(x)),
$$
as claimed. The proof is complete.
\end{proof}

\begin{lemma}[Price-to-pay if $\kappa_R \circ \varphi^N \neq \varphi^N \circ \kappa_R$]\label{lem:AW_mismatch_error}
Let $R \geq \sqrt{d}\Delta_N$, and let $\kappa_R$ be as in Lemma \ref{lem:compact_approximation}. Then for any $x:=(x^1, \dots, x^N)$ with $x^n \in (\R^d)^T$,
\begin{equation}
\mathcal{AW}\!\left((\widehat \mu^N)_R(x), \widehat{\mu_R}^N(x)\right) \leq C\,\frac{R}{N} \sum_{n = 1}^N \mathds{1}_{\{\|x^n\|\ge R/2\}},
\end{equation}
where $C > 0$ depends on $T$ and
\begin{equation*}
(\widehat \mu^N)_R(x) := \frac{1}{N} \sum_{n = 1}^N \delta_{\kappa_R(\varphi^N(x^n))}, \qquad \widehat{\mu_R}^N(x) := \frac{1}{N} \sum_{n = 1}^N \delta_{\varphi^N(\kappa_R(x^n))}.
\end{equation*}
\end{lemma}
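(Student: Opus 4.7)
The core idea is that the two empirical measures differ only at atoms indexed by observations $x^n$ whose norm is at least $R/2$, because on the ball $B_{R/2}(0)$ the quantization $\varphi^N$ and the truncation $\kappa_R$ commute by Lemma \ref{lem:compact_approximation}(2). Thus the total variation distance between the measures is controlled by the proportion of such indices, and the adapted Wasserstein distance can then be bounded by total variation times the diameter of the common support.

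\textbf{Step 1 (commutation on good indices).} Set $S := \{n \in \{1,\dots,N\}\,:\,\|x^n\| \ge R/2\}$. For each $n \notin S$ and each $t \in \{1,\dots,T\}$ we have $\|x^n_t\| \le \|x^n\| < R/2$, so $x^n_t \in B_{R/2}(0)$. Lemma \ref{lem:compact_approximation}(2) applied coordinate-by-coordinate then gives $\kappa_R(\varphi^N(x^n)) = \varphi^N(\kappa_R(x^n))$ for all $n \notin S$. Consequently, writing
\[
(\widehat\mu^N)_R(x) - \widehat{\mu_R}^N(x) = \frac{1}{N}\sum_{n \in S}\bigl(\delta_{\kappa_R(\varphi^N(x^n))} - \delta_{\varphi^N(\kappa_R(x^n))}\bigr),
\]
only the indices in $S$ contribute, so
\[
\operatorname{TV}\!\bigl((\widehat\mu^N)_R(x),\widehat{\mu_R}^N(x)\bigr) \le \frac{|S|}{N} = \frac{1}{N}\sum_{n=1}^N \mathds{1}_{\{\|x^n\|\ge R/2\}}.
\]

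\textbf{Step 2 (diameter of the common support).} Both measures are supported in a ball of radius $O(R\sqrt{T})$ in $(\R^d)^T$. Indeed, for every $y \in \R^d$, $\|\kappa_R(y)\| \le 2R$ (Lemma \ref{lem:compact_approximation}), and the quantization satisfies $\|\varphi^N(z)\| \le \|z\| + \tfrac{1}{2}\sqrt{d}\,\Delta_N$. Using $R \ge \sqrt{d}\,\Delta_N$, each coordinate of the atoms $\kappa_R(\varphi^N(x^n))$ and $\varphi^N(\kappa_R(x^n))$ has norm at most $3R$, hence the full $T$-tuple has norm at most $3R\sqrt{T}$, so the common support has diameter at most $6R\sqrt{T}$.

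\textbf{Step 3 (AW-TV comparison).} Applying the bound $\mathcal{AW}(\nu_1,\nu_2) \le \operatorname{diam}(\operatorname{spt}) \cdot T(2T-1) \cdot \operatorname{TV}(\nu_1,\nu_2)$ (the same consequence of \cite[Corollary 2.7]{acciaio2025estimating} invoked in the proof of Theorem \ref{thm:AW_concentration_compact}) to $\nu_1 = (\widehat\mu^N)_R(x)$ and $\nu_2 = \widehat{\mu_R}^N(x)$ yields
\[
\mathcal{AW}\!\bigl((\widehat\mu^N)_R(x),\widehat{\mu_R}^N(x)\bigr) \le 6R\sqrt{T}\cdot T(2T-1) \cdot \frac{1}{N}\sum_{n=1}^N \mathds{1}_{\{\|x^n\|\ge R/2\}},
\]
which is the desired inequality with $C = 6\sqrt{T}\cdot T(2T-1)$. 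There is no real obstacle here; the only care needed is to ensure the diameter bound of Step 2 produces the correct scaling in $R$ (and not a worse factor such as $R^2$), which uses the hypothesis $R \ge \sqrt{d}\,\Delta_N$ in an essential way.
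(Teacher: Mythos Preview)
Your proof is correct and follows essentially the same approach as the paper: bound $\operatorname{TV}$ by the fraction of indices where $\kappa_R$ and $\varphi^N$ fail to commute (which forces $\|x^n\|\ge R/2$ via Lemma~\ref{lem:compact_approximation}(2)), then convert to an $\mathcal{AW}$ bound via \cite[Corollary~2.7]{acciaio2025estimating} using that both measures are supported in a ball of radius $O(R)$. The only cosmetic difference is that the paper bounds the coordinate-wise cost $\sum_t\|\kappa_R(\varphi^N(x_t))-\varphi^N(\kappa_R(y_t))\|\le 5RT$ directly rather than the diameter, yielding the constant $5T(2T-1)$ in place of your $6\sqrt{T}\,T(2T-1)$.
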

\begin{proof}
We first bound the transport cost uniformly in order to control $\mathcal{AW}$ by total variation. Since $\kappa_R(\R^d) \subseteq B_{2R}(0)$, we have $\|\kappa_R(\varphi^N(z))\| \leq 2R$ for all $z \in \R^d$. Moreover, using the quantization error bound
$\|\varphi^N(u) - u\| \le \tfrac{1}{2}\sqrt{d}\,\Delta_N \le \tfrac{R}{2}$,
\begin{equation}\label{eqn:AW_mismatch_minor}
\|\varphi^N(\kappa_R(z))\| \leq \|\kappa_R(z)\| + \|\varphi^N(\kappa_R(z)) - \kappa_R(z)\| \leq \tfrac{5}{2}R
\end{equation}
for all $z \in \R^d$. Thus, for any pair of points $x, y \in (\R^d)^T$,
\begin{equation}\label{eqn:AW_mismatch_cost_bound}
\sum_{t = 1}^T \|\kappa_R(\varphi^N(x_t)) - \varphi^N(\kappa_R(y_t))\| \leq T \cdot \Big(\sup_{z \in \R^d} \|\kappa_R(\varphi^N(z))\| + \sup_{z \in \R^d} \|\varphi^N(\kappa_R(z))\| \Big) \overset{\eqref{eqn:AW_mismatch_minor}}{\leq} 5RT.
\end{equation}
Hence, according to \cite[Corollary 2.7]{acciaio2025estimating},
\begin{equation}\label{eqn:compact_AW_leq_TV}
\mathcal{AW}\!\left((\widehat \mu^N)_R(x), \widehat{\mu_R}^N(x)\right) \overset{\eqref{eqn:AW_mismatch_cost_bound}}{\leq} 5RT\,(2T-1)\,\operatorname{TV}\!\left((\widehat \mu^N)_R(x), \widehat{\mu_R}^N(x)\right).
\end{equation}
Next, $\kappa_R$ commutes with $\varphi^N$ on $B_{\frac{R}{2}}(0) \subset \R^d$ by Lemma \ref{lem:compact_approximation}; hence
\begin{equation}\label{eqn:AW_mismatch_commute}
\kappa_R(\varphi^N(x)) \neq \varphi^N(\kappa_R(x)) \implies \exists t:\ \|x_t\| \geq \tfrac{R}{2} \implies \|x\| \geq \tfrac{R}{2}.
\end{equation}
Finally, by convexity of $\operatorname{TV}$,
\begin{equation}\label{eqn:TV_bound_kappa}
\operatorname{TV}\!\left((\widehat \mu^N)_R(x), \widehat{\mu_R}^N(x)\right) \leq \frac{1}{N} \sum_{n = 1}^N \mathds{1}_{\{\kappa_R(\varphi^N(x^n)) \neq \varphi^N(\kappa_R(x^n))\}} \overset{\eqref{eqn:AW_mismatch_commute}}{\leq} \frac{1}{N} \sum_{n = 1}^N \mathds{1}_{\{ \|x^n\|\ge R/2\}}.
\end{equation}
The proof now follows from \eqref{eqn:compact_AW_leq_TV} and \eqref{eqn:TV_bound_kappa}.
\end{proof}

\begin{lemma}[Error term estimate for compact approximation]\label{lem:tau_estimate}
Let $R \geq \sqrt{dT}\,\Delta_N$, and let $\kappa_R$ be as in Lemma \ref{lem:compact_approximation}. For $N \in \N \cup \{\infty\}$ let $\slices X = (X^n)_{n = 1}^N \sim \mu \in \mathcal{P}_p((\R^d)^T)$ for some $p > 1$, and define
\begin{equation}\label{eqn:def_tau}
\tau^N_R := \mathcal{AW}(\mu, \mu_R) + \mathcal{AW}(\widehat{\mu_R}^N, (\widehat \mu^N)_R) + \mathcal{AW}((\widehat \mu^N)_R, \widehat \mu^N).
\end{equation}
Then the following holds:
\begin{enumerate}
    \item The expected value has the bound
    $$
    \mathbb{E}\,\tau^N_R \leq \frac{C}{R^{p-1}},
    $$
    where $C > 0$ depends on $T, p, \int \|x\|^p \, d\mu$.
    \item If $\mathcal{E}_{\alpha, \gamma}(\mu) < \infty$, then
    $$
    \mathbb{P}\!\left(\tau^N_R \geq \mathbb{E}\, \tau^N_R\right) \leq \mathcal{E}_{\alpha, \gamma} \cdot N \exp\!\left(-c \cdot R^\alpha\right),
    $$
    where $c > 0$ depends on $\alpha, \gamma$.
    \item If $\sum_{s = 1}^\infty \sqrt{\sum_{k = 2^s}^\infty \bar \eta_{\slices X}(k)} < \infty$, then
    $$
    \limsup_{N \to \infty} \tau^N_R \leq \frac{C}{R^{p-1}},
    $$
    where $C > 0$ depends on $T, p, \int \|x\|^p \, d\mu$.
\end{enumerate}
\end{lemma}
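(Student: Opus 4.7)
The plan is to analyze each of the three summands in $\tau^N_R$ separately using the deterministic bounds of Lemmas~\ref{lem:compact_approximation} and~\ref{lem:AW_mismatch_error}. Writing out
\begin{align*}
\mathcal{AW}(\mu,\mu_R) &\leq \sqrt{T}\int_{\{\|x\|\geq R\}}\|x\|\,\mu(dx),\\
\mathcal{AW}((\widehat\mu^N)_R,\widehat\mu^N) &\leq \frac{\sqrt{T}}{N}\sum_{n=1}^N \|\varphi^N(X^n)\|\mathds{1}_{\{\|\varphi^N(X^n)\|\geq R\}},\\
\mathcal{AW}(\widehat{\mu_R}^N,(\widehat\mu^N)_R) &\leq \frac{CR}{N}\sum_{n=1}^N \mathds{1}_{\{\|X^n\|\geq R/2\}},
\end{align*}
and invoking the quantization estimate $\|\varphi^N(x)-x\|\leq \tfrac{1}{2}\sqrt{dT}\Delta_N\leq R/2$ (which uses $R\geq\sqrt{dT}\Delta_N$), the inclusion $\{\|\varphi^N(X^n)\|\geq R\}\subseteq\{\|X^n\|\geq R/2\}$ shows that on the event $\{\max_n\|X^n\|<R/2\}$ both stochastic terms vanish, reducing $\tau^N_R$ to its deterministic piece $\mathcal{AW}(\mu,\mu_R)$. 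This single observation is the backbone of all three parts.

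For \emph{(1)} I would take expectations term by term and apply Markov: $\int_{\{\|x\|\geq R\}}\|x\|\,\mu(dx)\leq R^{-(p-1)}\int\|x\|^p\,\mu(dx)$, and by stationarity $\P(\|X^1\|\geq R/2)\leq (2/R)^p\int\|x\|^p\,\mu(dx)$, so the indicator sum contributes $CR\cdot R^{-p}=CR^{-(p-1)}$; the tail-integral term is treated identically after $\|\varphi^N(X^n)\|\leq\|X^n\|+\tfrac{1}{2}\sqrt{dT}\Delta_N$.

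For \emph{(2)} the key is that $\mathcal{AW}(\mu,\mu_R)\leq \E\,\tau^N_R$, so the first-paragraph observation gives the deterministic inclusion
\[
\{\tau^N_R>\E\,\tau^N_R\}\subseteq\bigl\{\max_{1\leq n\leq N}\|X^n\|\geq R/2\bigr\}.
\]
A union bound combined with exponential Markov, $\P(\|X^1\|\geq R/2)\leq \mathcal{E}_{\alpha,\gamma}(\mu)\exp(-\gamma 2^{-\alpha}R^\alpha)$, then yields the stated bound with $c=\gamma 2^{-\alpha}$.

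For \emph{(3)} the deterministic contribution is already $\leq CR^{-(p-1)}$ by \emph{(1)}, so the task reduces to a strong law for the two stochastic averages. The bounded indicator sum is routine: Lemma~\ref{lem:bounded_differences_mixing} with $L=1/N$, combined with the fact that $\sum_s\bar\eta_{\slices{X}}(s)<\infty$ (implied by the stated hypothesis), gives $\exp(-cN\varepsilon^2)$ concentration and therefore almost-sure convergence to $\P(\|X^1\|\geq R/2)$ via Borel--Cantelli. The main obstacle will be the unbounded tail integral $\tfrac{1}{N}\sum_n\|\varphi^N(X^n)\|\mathds{1}_{\{\|\varphi^N(X^n)\|\geq R\}}$: my plan is to truncate at dyadic levels $2^k R$ for $k\geq 0$, apply Lemma~\ref{lem:bounded_differences_mixing} on each truncated layer and sum the resulting fluctuations. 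Here the stronger summability $\sum_s\sqrt{\sum_{k\geq 2^s}\bar\eta_{\slices{X}}(k)}<\infty$ is precisely what allows one to aggregate the layerwise deviations into a single Borel--Cantelli-summable bound (Peligrad-type maximal inequality for $\eta$-mixing partial sums). Passing to the limit and using the Markov bound of \emph{(1)} for the expectations completes the almost-sure inequality $\limsup_N\tau^N_R\leq CR^{-(p-1)}$.
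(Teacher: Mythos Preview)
Parts \emph{(1)} and \emph{(2)} are handled essentially as in the paper: the same three deterministic bounds (Lemma~\ref{lem:compact_approximation} twice, Lemma~\ref{lem:AW_mismatch_error} once), the same quantization inclusion $\{\|\varphi^N(X^n)\|\geq R\}\subseteq\{\|X^n\|\geq R/2\}$, and the same reduction of $\{\tau^N_R>\E\tau^N_R\}$ to $\{\max_n\|X^n\|\geq R/2\}$ followed by union bound plus exponential Markov. The paper additionally merges the two stochastic summands into a single one via $R\,\mathds{1}_{\{\|x\|\geq R/2\}}\le 2\|x\|\,\mathds{1}_{\{\|x\|\geq R/2\}}$, but this is cosmetic.

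Part \emph{(3)} is where your route diverges, and where your proposal is underspecified. You plan a dyadic truncation in the \emph{range} of $\|X^n\|$ and apply Lemma~\ref{lem:bounded_differences_mixing} layer by layer; but a naive union bound over layers fails (the Lipschitz constant grows like $2^k R/N$, so with $\varepsilon_k$ summable in $k$ the exponent $N\varepsilon_k^2/(2^k R)^2$ tends to zero), and your appeal to a ``Peligrad-type maximal inequality'' does not explain why the specific hypothesis $\sum_s\sqrt{\sum_{k\geq 2^s}\bar\eta_{\slices X}(k)}<\infty$ is the right one. The paper's argument is both shorter and clarifies the origin of that condition: it first observes, via Proposition~\ref{prop:phi_mixing}, that $\phi_{f(\slices X)}(s)\leq\sum_{k\geq s}\eta_{\slices X}(k)$ for $f(x)=\|x\|\mathds{1}_{\{\|x\|\geq R/2\}}$, so the stated hypothesis translates exactly into the standard $\phi$-mixing requirement $\sum_s\phi^{1/2}(2^s)<\infty$. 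The Marcinkiewicz--Zygmund SLLN for $\phi$-mixing sequences (\cite{kuczmaszewska2011strong}) then gives $\tfrac{1}{N}\sum_n f(X^n)\to \E f(X^1)$ almost surely in one line, after which the H\"older--Markov bound of part \emph{(1)} closes the argument. In short, the paper outsources the hard analytic work to an off-the-shelf SLLN whose hypothesis is tailor-made for the assumed decay, whereas your dyadic plan would have to reprove a comparable maximal inequality from scratch and still needs a precise mechanism linking the layerwise bounds to the dyadic structure in $s$ of the mixing assumption.
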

\begin{proof}
We start by estimating two of the terms in \eqref{eqn:def_tau}. According to Lemma \ref{lem:compact_approximation},
\begin{align}\label{eqn:tau_est_3}
\begin{split}
&\mathcal{AW}(\mu, \mu_R) \le \sqrt{T} \int_{\{\|x\| \ge R\}} \|x\| \, \mu(dx)\\
&\mathcal{AW}((\widehat \mu^N)_R, \widehat \mu^N) \le \frac{\sqrt{T}}{N} \sum_{n = 1}^N \|\varphi^N(X^n)\| \cdot \mathds{1}_{\{ \|\varphi^N(X^n)\|\ge R\}}.
\end{split}
\end{align}
Using $R \geq \sqrt{dT}\,\Delta_N$, we write the quantization error bound as $\|\varphi^N(x) - x\| \leq \tfrac{1}{2}\,\sqrt{dT}\,\Delta_N \leq \tfrac12 \, R$ for all $x \in (\R^d)^T$. Thus,
\begin{equation}\label{eqn:tau_est_varphi_simp}
\|\varphi^N(x)\| \ge R \implies \|x\| \ge \tfrac{1}{2} \, R \implies \|\varphi^N(x)\| \leq \|x\| + \tfrac{1}{2}\,R \leq 2 \|x\|,
\end{equation}
and consequently
\begin{equation}\label{eqn:tau_est_term}
\mathcal{AW}((\widehat \mu^N)_R, \widehat \mu^N) \overset{\substack{\eqref{eqn:tau_est_3}\\\eqref{eqn:tau_est_varphi_simp}}}{\leq} \frac{2\sqrt{T}}{N} \sum_{n = 1}^N \|X^n\| \cdot \mathds{1}_{\{\|X^n\|\ge R/2\}}.
\end{equation}

Next, controlling the middle term in \eqref{eqn:def_tau} by Lemma \ref{lem:AW_mismatch_error} and using \eqref{eqn:tau_est_3} and \eqref{eqn:tau_est_term}, we obtain
\begin{align}\label{eqn:tau_dec}
\begin{split}
\tau^N_R &\overset{\substack{\eqref{eqn:tau_est_3}\\\eqref{eqn:tau_est_term}}}{\leq} \sqrt{T} \int_{\{\|x\| \geq R\}} \|x\| \, \mu(dx) + C \, \frac{R}{N} \sum_{n = 1}^N \mathds{1}_{\{\|X^n\|\ge R/2\}} + \frac{2\sqrt{T}}{N} \sum_{n = 1}^N \|X^n\| \cdot \mathds{1}_{\{\|X^n\|\ge R/2\}}\\
&\leq \sqrt{T} \int_{\{\|x\| \geq R\}} \|x\| \, \mu(dx) + \frac{C}{N} \sum_{n = 1}^N \|X^n\| \cdot \mathds{1}_{\{\|X^n\|\ge R/2\}},
\end{split}
\end{align}
where $C > 0$ depends on $T$, and the second inequality follows from $\tfrac{R}{2} \mathds{1}_{\{\|x\|\ge R/2\}} \leq \|x\| \mathds{1}_{\{\|x\|\ge R/2\}}$. As $\mu$ is $p$-integrable, we conclude by H\"older's and Markov inequality
\begin{equation}\label{eqn:tau_holder_markov}
\int_{\{\|x\| \geq R\}} \|x\| \, \mu(dx) \leq \mathbb{E}[|X^1|^p]^\frac{1}{p} \cdot \mathbb{P}\!\left(\|X^1\| \geq R\right)^\frac{p-1}{p} \leq \frac{\mathbb{E}[|X^1|^p]}{R^{p-1}},
\end{equation}
and therefore
\begin{equation}
\mathbb{E}\, \tau^N_R \overset{\substack{\eqref{eqn:tau_dec}\\\eqref{eqn:tau_holder_markov}}}{\leq} \frac{C}{R^{p - 1}},
\end{equation}
where $C > 0$ depends on $T$ and $\int \|x\|^p \, d\mu$; this proves \emph{(1)}.

To see \emph{(2)}, observe that $\tau^N_R \geq \mathbb{E} \, \tau^N_R$ can only happen if $\kappa_R(\varphi^N(X^n)) \neq \varphi^N(X^n)$ or $\kappa_R(\varphi^N(X^n)) \neq \varphi^N(\kappa_R(X^n))$ for some $n$. This implies $\|X^n\| \geq \tfrac{R}{2}$, as $\kappa_R$ is the identity and $\kappa_R \circ \varphi^N = \varphi^N \circ \kappa_R$ on $B_{\frac{R}{2}}(0) \subseteq \R^d$. Thus, by union bound
$$
\mathbb{P}\!\left(\tau^N_R \geq \mathbb{E}\, \tau^N_R\right) \leq N\, \P(\|X^1\|\ge R/2)\le \mathcal{E}_{\alpha, \gamma} \cdot N \exp\!\left(-c \cdot R^\alpha\right),
$$
where $c > 0$ depends on $\alpha, \gamma$. Next, recall the $\phi$-mixing coefficient defined in \eqref{eqn:phi_mixing}. By Proposition \ref{prop:phi_mixing} with $f(\slices X) := (f(X^n))_{n = 1}^N$ and by Proposition \ref{prop:eta_mixing_information_processing} applied with $f(x) = \|x\| \cdot \mathds{1}_{\{\|x\|\ge R/2\}}$ we have
\begin{equation}\label{eqn:tau_SLLN_coeff}
\phi_{f(\slices X)}(s) \leq \sum_{k = s}^{\infty} \eta_{f(\slices X)}(k) \leq \sum_{k = s}^{\infty} \eta_{\slices X}(k).
\end{equation}
Thus, by the Marcinkiewicz–Zygmund SLLN for $\phi$-mixing sequences (see \cite[Equation (1)]{kuczmaszewska2011strong}) applied with $f(\slices X)$ (note that $\sum_{s = 1}^\infty \phi^\frac{1}{2}_{f(\slices X)}(2^s) < \infty$ by assumption) we have
\begin{equation}\label{eqn:tau_slln}
\frac{1}{N} \sum_{n = 1}^N \|X^n\| \cdot \mathds{1}_{\{\|X^n\|\ge R/2\}} \to \int_{\{\|x\| \geq \frac{R}{2}\}} \|x\| \, \mu(dx), \qquad N \to \infty,
\end{equation}
$\mathbb{P}$-almost surely. Therefore,
$$
\limsup_{N \to \infty} \tau^N_R \overset{\substack{\eqref{eqn:tau_dec}\\\eqref{eqn:tau_slln}}}{\leq} C \cdot \int_{\{\|x\| \geq \frac{R}{2}\}} \|x\| \, \mu(dx) \overset{\eqref{eqn:tau_holder_markov}}{\leq} \frac{C}{R^{p - 1}},
$$
where $C > 0$ depends on $T, p, \int \|x\|^p\, d\mu$. The proof is complete.
\end{proof}

\bibliography{bib}
\bibliographystyle{siam}

\end{document}